\theoremstyle{plain}
\newtheorem{lemma}{Lemma}[section]
\newtheorem{defn}[lemma]{Definition}
\newtheorem{prop}[lemma]{Proposition}
\newtheorem{theorem}[lemma]{Theorem}
\newtheorem{proposition}[lemma]{Proposition}
\newtheorem{corollary}[lemma]{Corollary}
\theoremstyle{remark}
\def\bg{\begin{color}{blue}}
\def\br{\begin{color}{red}}
\def\eg{\end{color}}
\def\er{\end{color}}
\def\note{\footnote{\bg\comment\eg}}
\def\note{}
\def\PP{\mathbb{P}}
\def\ve{{\varepsilon}}
\def\le{\leqslant}
\def\ge{\geqslant}
\def\es{\emptyset}
\def\da{\downarrow}
\def\E{{\mathbb E}}
\def\O{{\Omega}}
\def\Q{{\mathbb Q}}
\def\R{{\mathbb R}}
\def\C{{\mathbb C}}
\def\N{{\mathbb N}}
\def\Z{{\mathbb Z}}
\def\a{{\alpha}}
\def\b{{\beta}}
\def\d{{\delta}}
\def\D{{\Delta}}
\def\t{{\tau}}
\def\g{{\gamma}}
\def\G{{\Gamma}}
\def\s{{\sigma}}
\def\l{{\lambda}}
\def\th{{\theta}}
\def\o{{\omega}}
\def\cS{{\cal S}}
\def\cL{{\cal L}}
\def\cR{{\cal R}}
\def\cE{{\cal E}}
\def\cF{{\cal F}}
\def\cH{{\cal H}}
\def\cD{{\cal D}}
\def\cZ{{\cal Z}}
\def\cW{{\cal W}}
\def\q{\quad}
\def\id{\operatorname{id}}
\def\cp{\operatorname{cap}}
\def\dom{\operatorname{dom}}
\def\<{\langle}
\def\>{\rangle}
\def\ua{\uparrow}
\def\sse{\subseteq}
\def\sm{\setminus}
\renewcommand\epsilon{\ve}
\begin{document}
\bibliographystyle{plain}

\begin{center}
\LARGE \textbf{Planar aggregation and the coalescing Brownian flow}

\vspace{0.2in}

\large {\bfseries James Norris
\footnote{Statistical Laboratory, Centre for Mathematical Sciences,
  Wilberforce Road, Cambridge, CB3 0WB, UK }
\& Amanda Turner
\footnote{Department of Mathematics and Statistics,
Fylde College,
Lancaster University,
Lancaster,
LA1 4YF, UK}}

\vspace{0.2in}
\small
\today

\end{center}
\begin{abstract}
We study a scaling limit associated to a model of planar aggregation.
The model is obtained by composing certain independent random conformal maps.
The evolution of harmonic measure on the boundary of the cluster is shown to
converge to the coalescing Brownian flow.
\end{abstract}
\vspace{0.2in}


\section{Introduction}
A simplified model of aggregation in two dimensions may be formulated as follows.
Let $K_0$ be the closed disc of radius $1$, with centre at the origin, and
let $P_1$ be another closed disc, of diameter $\d>0$ say,
tangent to $K_0$ at a point chosen uniformly at random.
Set $D_0=(\C\cup\{\infty\})\sm K_0$ and $D_1=(\C\cup\{\infty\})\sm K_1$,
where $K_1=K_0\cup P_1$.
Write $F_1$ for the unique conformal isomorphism $D_0\to D_1$ such that
$F_1(\infty)=\infty$ and $F_1'(\infty)>0$.
Suppose then that $F_2,F_3,\dots$ are independent and identically distributed
copies of $F_1$, and define
$$
D_n=F_1\circ\dots\circ F_n(D_0).
$$
The complementary sets $K_n=\C\sm D_n$ form an increasing family and may be
considered as a model of aggregation, where, for each $n\ge0$, we add at time $n+1$
a new particle $P_{n+1}=K_{n+1}\sm K_n$.
Note that $P_{n+1}$ is the image under the conformal map $F_1\circ\dots\circ F_n$
of a disc distributed as $P_1$ and independent of $K_n$.
Since harmonic measure is invariant under conformal maps,
conditional on $K_n$, the random point at which
$P_{n+1}$ is attached to $K_n$ is distributed on the boundary of $K_n$
according to the normalized harmonic measure from infinity. In this respect, the
model is appropriate for the aggregation of diffusive particles `coming in from infinity'.
We note however that the added particle is distorted in a way depending
on the current cluster $K_n$.

In this paper, we identify a scaling limit for the restrictions to the boundary of $D_0$
of the maps $(F_m\circ\dots\circ F_n)^{-1}$, for
$m\le n$, as $m,n\to\infty$ and $\d\to0$ in a suitable way.
This can be thought of as identifying the time-evolution of the total harmonic measure
carried by the various `fingers' of the growing cluster.
The limit is not sensitive to the shape of the first added particle: although we
specified discs in the description above, the same sort of limit applies more generally.
The limit object is the flow of coalescing Brownian motions on the circle, also
known as the Brownian web.

A brief review of related work is given next, followed by some illustrations of typical
clusters, for certain cases of the model. From Section \ref{LFC} on, we confine our attention
to the restrictions of maps to the boundary of $D_0$, generalizing at the same 
time to a natural class of L\' evy random flows on the circle. We first show
a weak convergence result
for the time-evolutions of finitely many points. In Section \ref{CBF} we
describe a new canonical space for the coalescing Brownian
flow.
Then, in Section \ref{SKOR}, a larger flow-space, of Skorokhod type,
is introduced. This is a complete separable metric space
suitable for the formulation of weak convergence of stochastic flows
which are not necessarily
continuous in time or space.
The convergence of L\' evy flows to the coalescing Brownian flow,
is shown in Section \ref{MR}. Time reversal of flows is discussed in Section
\ref{TR}. Then, in Section \ref{model}, 
convergence results for the aggregation model 
are deduced as corollaries of the convergence of L\' evy flows.
Some technical details are left to an Appendix, where we discuss, in
particular, the relation between our formulation of the coalescing Brownian flow
and the Brownian webs of Fontes et al. 

\section{Review of related work}

\subsection{Coalescing Brownian motions}
It is straightforward to define a finite family of (standard) Brownian motions 
in one dimension, with 
given space-time starting points, independent until they collide, and
coalescing on collision.
The possibility to extend such a model to a space-time continuum of starting
points was shown by Arratia in 1979 in his PhD thesis \cite{A79}, where the
model was considered as a limit object for coalescing random walks on the integers.
Subsequent work has been carried out by many people
including Harris \cite{harris}, who was interested in general coalescing
stochastic flows, and Piterbarg \cite{Piterbarg}, who showed that
Arratia's flow arises as a weak limit of rescaled isotropic
stochastic flows. Further properties were developed by T\'{o}th and Werner \cite{TW} in 1998,
who used the flow to contruct an object which they called
the `continuous true self-repelling motion'. 
Tsirelson \cite{Tsirelson} studied Arratia's flow, formulated in terms of $L^2$-spaces, rather than pathwise,
as an example in the general theory of stochastic flows. 
Fontes, Isopi,
Newman and Ravishankar \cite{FINR,FN} introduced the name `Brownian web'
in 2004,
to describe a number of new formulations of Arratia's flow, and gave further
characterization and convergence results. 
The paper \cite{FINR}
characterizes the Brownian web as a
random element of a space of compact collections of
paths with specified starting points. 

In this paper we follow most closely the viewpoint of T\'{o}th and Werner but lay greater
stress on the almost sure flow-type properties, formulating
Arratia's flow as a random variable in
a certain complete separable metric space of continuous weak flows. 
We show that there is a
unique Borel probability measure on this space with respect to which all
the $n$-point motions are coalescing Brownian motions, and that this 
measure is invariant under time reversal.
Moreover, we show that any sequence of random such flows, all of 
whose $n$-point motions
converge to coalescing Brownian motions, converges to the coalescing Brownian 
flow.  
The exact correspondence between our work and that in \cite{FINR} is discussed in the Appendix. 

\subsection{Planar random growth}
\label{dlasec}

Our motivation for looking at the coalescing Brownian flow arises from a
surprising connection with planar random growth processes.

The simplest sorts of planar growth process to formulate take values in finite
subsets of $\Z^2$, starting from a singleton at the origin, 
and grow by the successive addition of sites adjacent to the present cluster, which are
chosen according to some distribution determined by the present cluster.
In 1961, Eden \cite{Eden} introduced one such process, where the added site is simply chosen
uniformly from all adjacent sites. This has been 
considered as a model for the growth of bacterial cells or tissue cultures 
of cells that are constrained from moving.
In 1981, Witten and Sander \cite{W+S} put forward another such process,
known as diffusion-limited aggregation or DLA. Here, particles perform 
random walks `starting from infinity' 
until they reach a site adjacent to the cluster. This site is then added to the cluster.
DLA is considered as a model for the formation of aggregates by deposition,
such as soot particles.
A family of further processes, indexed by a parameter $\eta\in[0,1]$ and
called dielectric-breakdown models,
were discussed by Niemeyer et al. \cite{NPW}.
In these processes, the random walk hitting probabilities on
sites adjacent to the cluster are interpreted as an electric field $E$;
the probabilities of attachment for new sites are then chosen proportional to $E^\eta$.
Thus the case $\eta=0$ is the Eden model and the case $\eta=1$ is DLA.

The primary interest in these and other related processes is in the 
asymptotic behaviour of large clusters. Computational investigations reveal
the emergence of complex structures, of fractal type, which sometimes appear a good match
for observed phenomena. However, such
investigations are highly sensitive to model variations and there are few notable
mathematical results, with the exception of Kesten's 1987 
growth estimate \cite{Kesten} for DLA. Moreover, the
simulations appear to show that clusters depend on the fine
lattice structure at large scales. This suggests that lattice-based processes may never
properly model continuum phenomena.

In 1998, Hastings and Levitov \cite{HL} formulated a family of continuum growth models
in terms of sequences of iterated conformal maps, indexed by a parameter $\a\in[0,2]$. 
As in the model described in the Introduction, 
they identify the cluster after the arrival of $n$ particles with a conformal map 
$\Phi_n=F_1\circ\dots\circ F_n$ on $D_0$, where, conditional on $\Phi_n$, the map $F_{n+1}$ corresponds
to a particle attached at a random point $z_{n+1}$ on the unit circle, but now of radius $\d_{n+1}=\d_0|\Phi_n'(z_{n+1})|^{-\a}$.
Hastings and Levitov argue, by comparing local growth rates, that their model can be related to lattice 
dielectric-breakdown by setting $\eta=\alpha-1$. Further exploration of this relation is discussed in the 
survey paper by Bazant and Crowdy \cite{B+C}.

Carleson and Makarov \cite{C+M}, in 2001, obtained a growth estimate for a deterministic analogue 
of the DLA model. 
In 2005, Rohde and Zinsmeister \cite{RZ} considered
the case $\alpha=0$ in the Hastings--Levitov family. 
They established a scaling limit, in the case where the particle size is small (but fixed),
as the number of particles tends to infinity, and showed that the limit sets were one dimensional. They
also gave estimates for the dimension of the limit sets in the case of general $\alpha$,
and discussed limits of deterministic variants.

We also consider the case $\alpha=0$ but in the limiting regime where the particle
diameter $\d$ becomes small and the number of particles is of the order $\d^{-3}$.
We then show that the resulting flow map, restricted to points on the unit circle, 
converges to the coalescing Brownian flow.

\section{Some illustrations}
\label{illus}

In the Introduction we described the construction of a sequence of iterated conformal maps that can be
interpreted as a model of the aggregation of diffusive particles `coming in from infinity'. 
We chose the basic particle $P_1$ to be a closed disc of diameter $\d>0$. However our analysis 
turns out to apply to a wide range of basic particle shapes, as we shall see in Section \ref{model}.
Recall that $D_0$ denotes the complement of the closed unit disc $K_0$ in $\C\cup\{\infty\}$ and 
$D_1$ denotes the complement of $K_1$ in $\C\cup\{\infty\}$, where $K_1=K_0\cup P_1$. Write
$F_1$ for the conformal isomorphism $D_0\to D_1$ fixing $\infty$, with $F_1'(\infty)>0$ and write
$G_1$ for the inverse isomorphism $D_1\to D_0$. Denote the upper half-plane by $H$.

We now discuss briefly the form of $G_1$ for two particular choices of $P_1$.
The first of these, corresponding to the slit $[1,1+\d]$, was used to generate some
realizations of the cluster for various values of $\d$, which are presented in Figure \ref{DLAfig}.

\begin{figure}[p]
  \vspace{-20pt}
    \subfigure[The cluster after a few arrivals with $\d=1$.]
      {\label{DLAfew}
      \epsfig{file=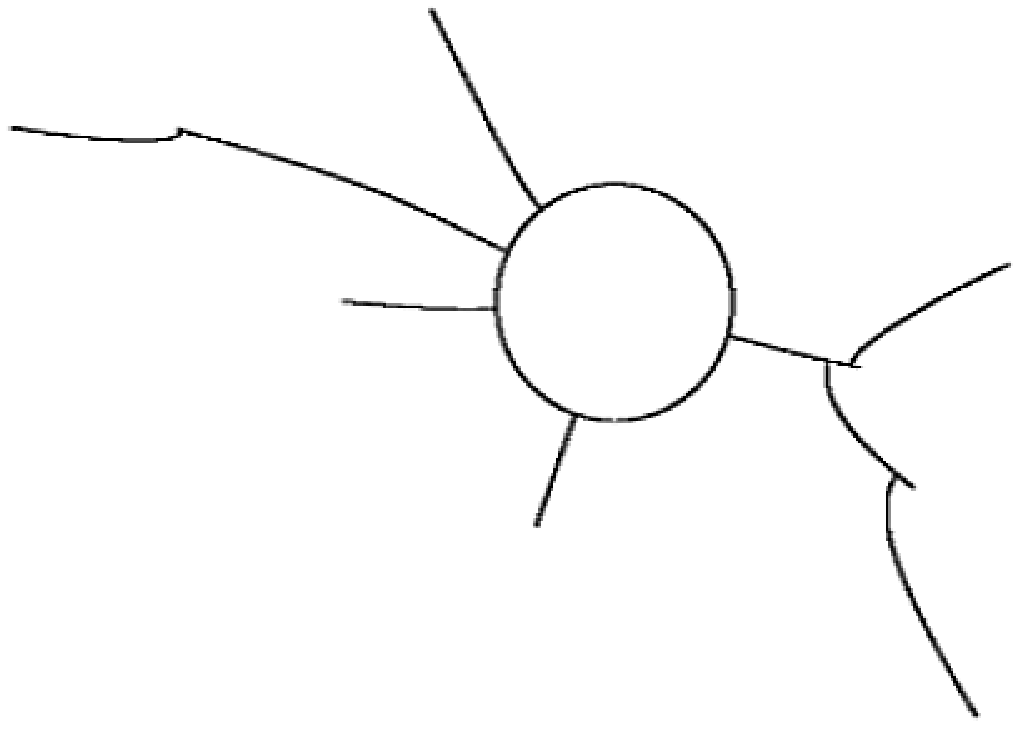,width=5.8cm}}
    \hfill
    \subfigure[The cluster after 100 arrivals with $\d=1$.]
      {\label{DLAmany}
      \epsfig{file=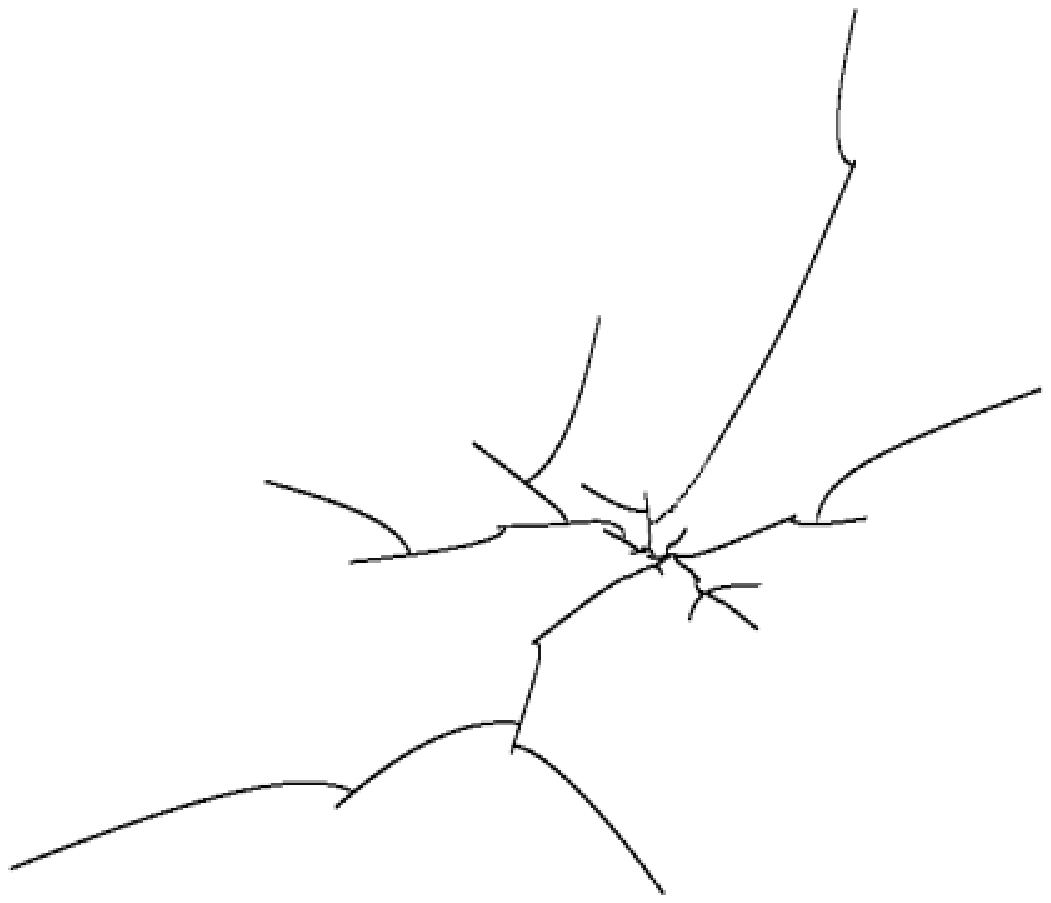,width=5.8cm}}
    \subfigure[The cluster after 800 arrivals with $\d=0.1$.]
      {\label{slit10}
      \epsfig{file=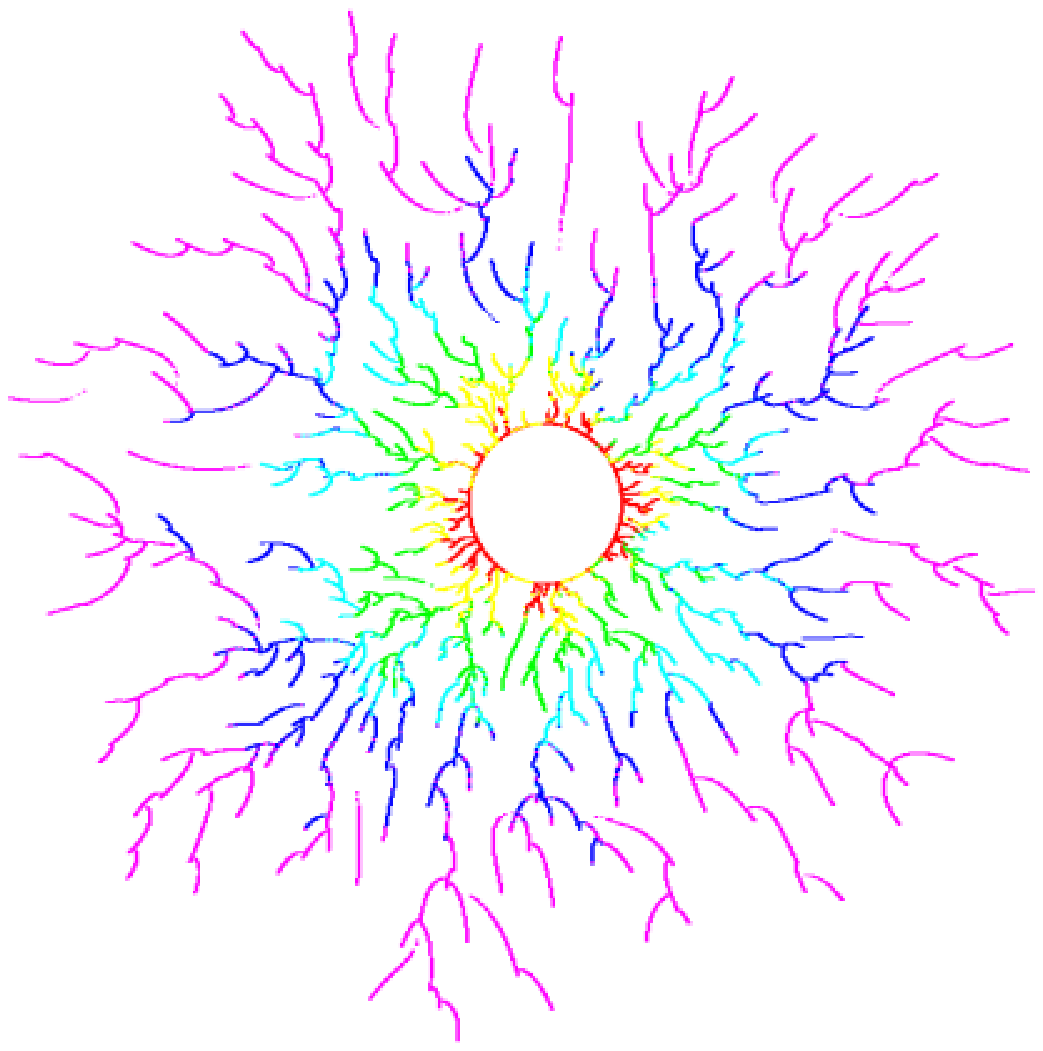,width=5.8cm}}
    \hfill
    \subfigure[The cluster after 5000 arrivals with $\d=0.04$.]
      {\label{slit25}
      \epsfig{file=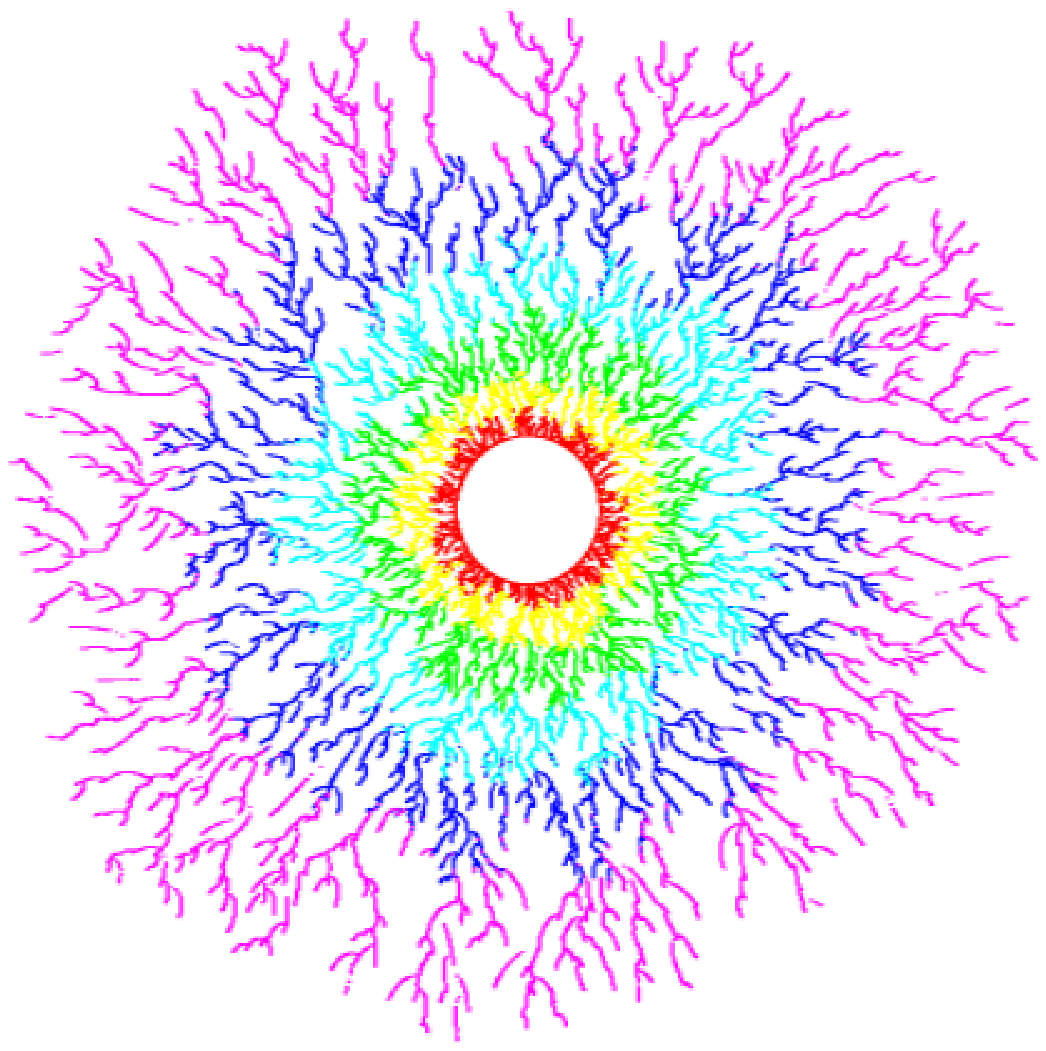,width=5.8cm}}
    \subfigure[The cluster after 20000 arrivals with $\d=0.02$.]
      {\label{slit50}
      \epsfig{file=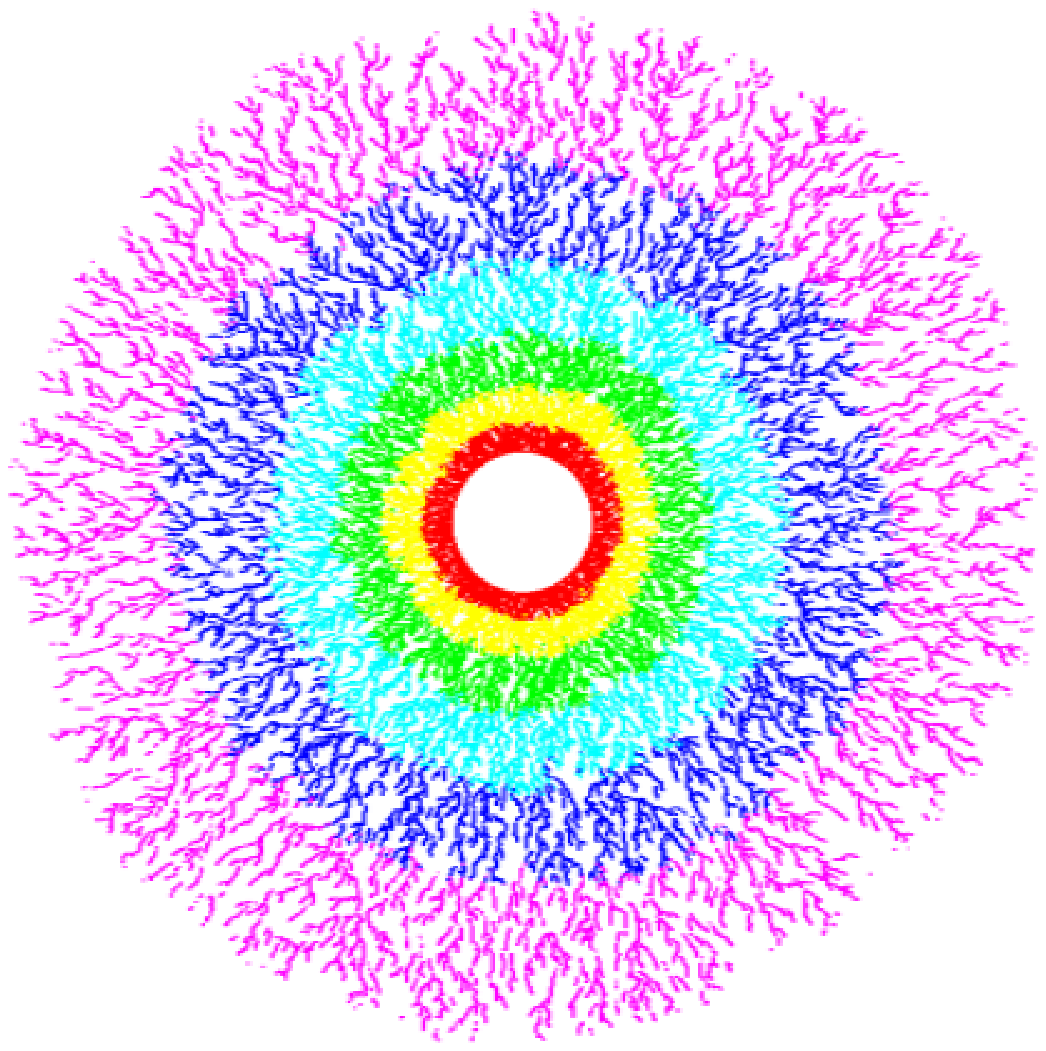,width=5.8cm}}
    \hfill
    \subfigure[The stochastic flow $(X_{t0})_{t \in [0,1]}$ with
    $\d=0.02$.]
      {\label{flow50}
      \epsfig{file=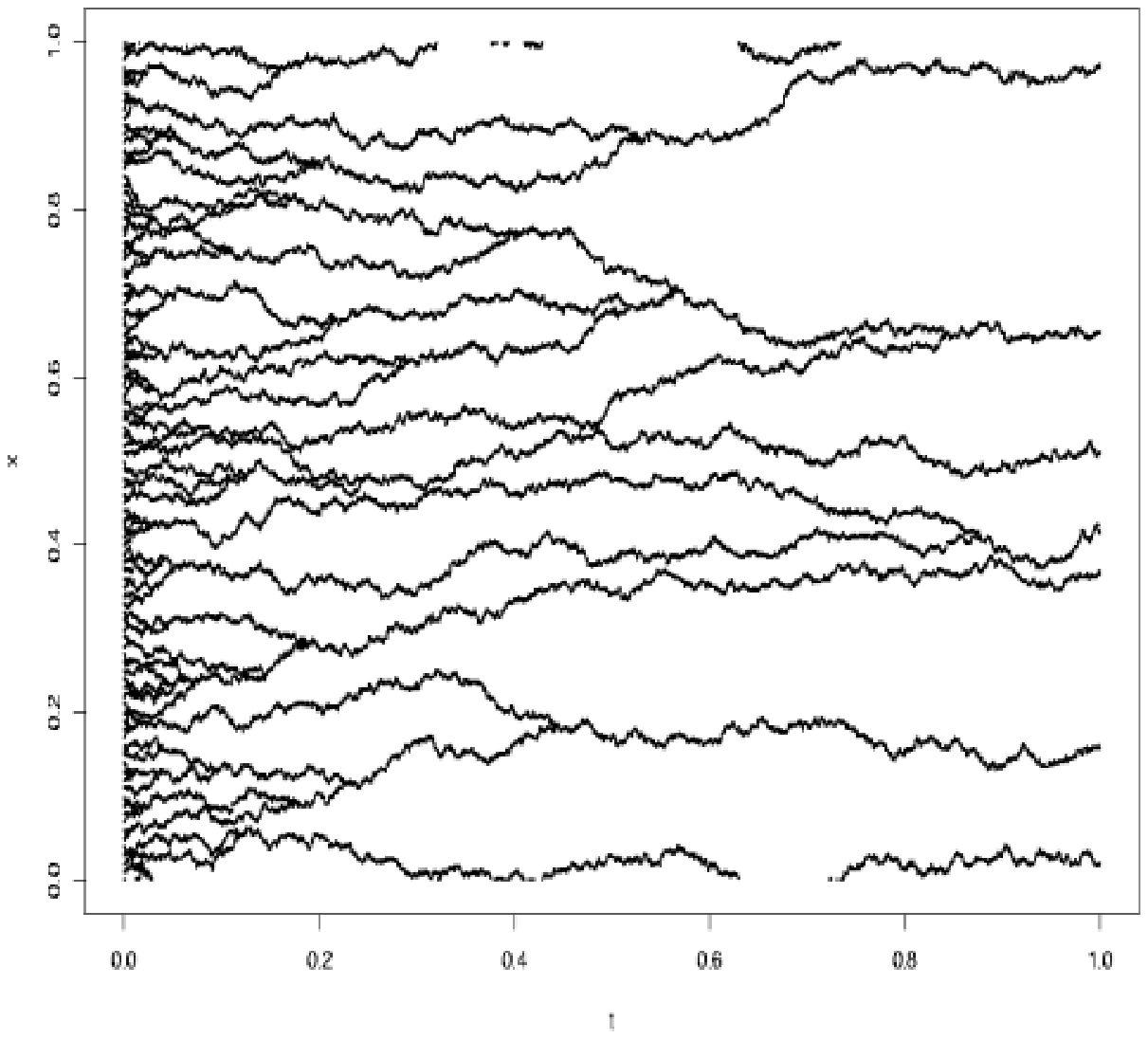,width=5.8cm}}
  \caption{\textsl{The slit model case of simplified Hastings-Levitov DLA}}
  \label{DLAfig}
\end{figure}

For the slit, we can obtain the map $G_1$ by composing the sequence of maps
$$
D_1\to H\sm (0,i\sqrt{t}]\to H\to H\to D_0,
$$
given by
$$
g_1(z)=i\frac{z-1}{z+1},\q
g_2(z)=\sqrt{z^2+t},\q
g_3(z)=\l z,\q
g_4(z)=\frac{i+z}{i-z}.
$$
Here, we choose $t=\d^2/(2+\d)^2$ so that $g_2\circ g_1(1+\d)=g_2(i\d/(2+\d))=0$ 
and choose $\l=1/\sqrt{1-t}$ so that
$g_3\circ g_2\circ g_1(\infty)=g_3\circ g_2(i)=\l i\sqrt{1-t}=i$. Then $G_1(1+\d)=1$
and $G_1(\infty)=\infty$, as required.
The maps extend to the boundaries of their domains in obvious ways. In particular the boundary
of the slit maps under $G_1$ to a boundary arc $[-\th_\d,\th_\d]$ of the unit circle. 
Note that $G_1(1)=g_4(\pm\l\sqrt{t})$ and $g_4'(0)=-2i$, from which we see that 
\begin{equation}\label{DTHN}
\frac{\th_\d}\d\to1\q\text{as}\q \d\to0. 
\end{equation}

We observe in Figure \ref{DLAmany}, when $\d=1$, that incoming particles are markedly distorted 
and, in particular, that particles arriving later tend to be larger. This effect is diminished
when we examine smaller values of $\d$. In Figure \ref{slit50}, the cluster is a rough ball, but with
some sort of internal structure. The colours label arrivals in different epochs, showing that growth
over time is uniform and there is a close relationship between time of arrival and the distance
from the origin at which the particle sticks. Figure \ref{flow50} focuses on the motion of points
on the original circular boundary, which is the aspect of these simulations examined theoretically
in this paper. The coalescing motion displayed agrees with our theoretical predictions. The size of
the gaps between the flow lines gives the amount of harmonic measure carried on fingers of the
cluster based between the chosen points on the unit circle.

\def\keep{  $$
  F_1^{-1}(z) = \left ( 1 - \left ( \frac{1}{1+2N}\right )^2 \right )
  \frac{z+1}{2z} \left ( z + 1 + \sqrt{z^2 + 1 - 2z \frac{ 1 +\left (
  \frac{1}{1+2N}\right )^2}{1 - \left ( \frac{1}{1+2N}\right )^2}
  }\right ) - 1.
  $$
  This map can be extended continuously to the boundary of the unit disc by setting $F_1^{-1}(e^{2 \pi i x}) = \exp (2 \pi f(x))$, for $x \in (0, 1)$, where
  $$
  f(x) =  \begin{cases}
          \pi^{-1} \tan^{-1} \sqrt{\frac{\tan^2 \pi x + (1 +
          2N)^{-2}}{1 - (1 + 2N)^{-2}}} & \quad x \in (0,
         \frac{1}{2}) \\
         - \pi^{-1} \tan^{-1} \sqrt{\frac{\tan^2 \pi x + (1 +
          2N)^{-2}}{1 - (1 + 2N)^{-2}}} & \quad x \in (\frac{1}{2},1).
         \end{cases}
  $$
}

In the case where $P_1$ is the lune $L=\{z\in\C:|z-1|\le\d,|z|>1\}$, the map $G_1$
is given by
$$
G_1(z)=\frac{(e^{i\theta}z-1)^a-(e^{-i\theta}z-1)^a}{(z-e^{-i\theta})^a-(z-e^{i\theta})^a},
$$
where
$$
\theta=2\tan^{-1}\d\sqrt{1-\d^2/4},
\q a=\frac{\pi}{\pi-\cos^{-1}(\d/2)}.
$$
We shall use in Section \ref{model} the following estimate of the 
logarithmic capacity of $K_1$ for this case. As $\d\to0$, 
\begin{equation}\label{LCE}
\cp(K_1)=-\log\lim_{z\rightarrow\infty}\frac{G_1(z)}z
                       =\log\frac{a\sin\theta}{\sin(a\theta)}=2\d^2+o(\d^2).
\end{equation}

\section{A class of L\' evy flows on the circle}\label{LFC}
We introduce a class of random flows on the circle, whose distributions
are invariant under rotations of the circle, and, under which, each point
on the circle performs a L\' evy process of mean $0$ with no diffusive part.
The flow maps are in general not continuous on the circle, but have a
non-crossing property. In a certain asymptotic regime, the
motion of the flow from a countable family of starting points
is shown to converge weakly to a family of
coalescing Brownian motions.

We specify a particular flow by the choice of a non-decreasing,
right-continuous function $f^+:\R\to\R$ with the following {\em degree}
$1$ property\footnote{These functions can be considered as liftings of maps from the circle
$\R/\Z$ to itself having an order-preserving property.
In practice, the circle map will be a perturbation of the identity map and
our basic map $f^+$ will be the unique lifting which is close to the identity map on $\R$.}
\begin{equation}
\label{circdef}
f^+(x+n) = f^+(x) + n, \quad x\in\R,\q n \in \mathbb{Z}.
\end{equation}
Denote the set of such functions by $\cR$ and write $\cL$ for the
analogous set of left-continuous functions.
Each $f^+\in\cR$ has a left-continuous modification $f^-\in\cL$, given by $f^-(x)=f(x-)$.
Write $\cD$ for the set of all pairs $f=\{f^-,f^+\}$.
When $f^+$ is continuous, we shall also write $f=f^+$ and, generally, we write $f$ in place of $f^\pm$
in expressions where the choice of left or right-continuous modification makes no difference to the value.
For $f^+\in\cR$ and $z=\tilde z+\Z\in\R/\Z$, define $(\t_z f)^+\in\cR$ by
$$
(\t_z f)^+(x)=\tilde z+f^+(x-\tilde z),\q x\in\R.
$$
The degree $1$ property ensures that there is no dependence on the choice of representative $\tilde z$.
The sets $\cR$ and $\cL$ are closed under composition, but $\cD$ is not.
In fact, if $f_1,f_2\in\cD$, then $f_2^-\circ f_1^-$ is the left-continuous modification of $f_2^+\circ f_1^+$
if and only if $f_1$ sends no interval of positive length to a point of discontinuity of $f_2$.
We say in this case that $f_2\circ f_1\in\cD$, denoting by $f_2\circ f_1$ the pair $\{f_2^-\circ f_1^-,f_2^+\circ f_1^+\}$.
Define $\id(x)=x$ and set $\cD^*=\cD\sm\{\id\}$.
We assume throughout that our basic map $f\in\cD^*$.
Write $\tilde f^\pm$ for the periodic functions $\tilde f^\pm(x)=f^\pm(x)-x$.
Define constants  $\rho=\rho(f)>0$ and $\b=\b(f)\in\R$
by
\begin{equation}
\label{lambdadef}
\rho \int_0^1\tilde{f}(x)^2 dx = 1, \q \beta =\rho\int_0^1\tilde{f}(x) dx.
\end{equation}

We now define a probability space $(\O,\cF,\PP)$. Take $\O$ to be the set
of those integer-valued Borel measures on $\R\times(\R/\Z)$
which are finite on bounded sets, and also have the property that
$\omega(\{t\} \times(\R/\Z)) \in \{0,1\}$ for all $t \in \mathbb{R}$.
We interpret the first factor $\R$ of the underlying space as time.
Write $\cF^o$ for the $\s$-algebra on $\O$ generated by evaluations on Borel sets.
For intervals $I\sse\R$, write $\cF_I^o$
for the $\s$-algebra on $\O$ generated by evaluations on Borel subsets of $I\times(\R/\Z)$.
There is a unique probability measure $\PP = \PP^{\rho}$ on
$(\O,\cF^o)$ which makes the coordinate map
$\mu(\o,dt,dz)=\o(dt,dz)$ into a Poisson random measure on
$\R\times(\R/\Z)$ with intensity $\nu(dt,dz)=\rho\, dtdz$.
Here $dz$ denotes Lebesgue measure on $\R/\Z$: we shall use,
without further comment, in defining integrals,
the obvious identification of $\R/\Z$ and $[0,1)$. 
Write $\cF$ for the completion of $\cF^o$ with respect to $\PP$,
extending $\PP$ to $\cF$ as usual.

We first construct the flow in the case where $\b=0$.
Given $\omega \in \Omega$, for each $t \in \mathbb{R}$ define $F_t=\{F_t^-,F_t^+\}\in \cD$ by
$$
F_t^\pm = \begin{cases}
          (\t_z f)^\pm, & \text{if $\o$ has an atom at $(t,z)$}, \\
          \id,          & \text{otherwise}.
         \end{cases}
$$
For each finite interval $I\sse\R$, define $X_I^+\in\cR$ and $X_I^-\in\cL$ by
$$
X_I^\pm=F_{T_n}^\pm\circ\dots\circ F_{T_1}^\pm,
$$
where $T_1<\dots<T_n$ are the times of the atoms of $\o$ in $I\times(\R/\Z)$.
We take $X_I^\pm=\id$ if there are no such atoms.

In the case where $\b\not=0$, we replace $\o$ in the preceding construction
by $\o^\b$, given by
$$
\o^\b(dt,dz)=\o(dt,d(z+\b t)),
$$
to obtain $X^{\b,\pm}_I$, and then set
$$
X_I^\pm(x)=X_I^{\b,\pm}(x+\b s)-\b t,
$$
where $s=\inf I$ and $t=\sup I$.

Since $f$ can have at most countably many points of discontinuity and intervals of
constancy, and since, under $\PP$, the positions of the atoms of $\o$ are
distributed uniformly on $\R/\Z$, we have, almost surely, $X_I=\{X_I^-,X_I^+\}\in\cD$ for all
intervals $I$.
Write $I=I_1\oplus I_2$ if $I_1, I_2$ are disjoint intervals with $\sup I_1=\inf I_2$ and $I=I_1\cup I_2$.
Note that $(X_I:I\sse\R)$ has the following properties: 
\begin{align}\label{LFM}
&X^+_I(x)\text{ and $X^-_I(x)$ are random variables for all finite intervals $I$ and all $x\in\R$},\\
\label{LWF}
&X_I^+=X_{I_2}^+\circ X_{I_1}^+\text{ and $X_I^-=X_{I_2}^-\circ X_{I_1}^-$ whenever $I=I_1\oplus I_2$},\\
\label{LPC}
&X^+_{(s,t)}(x)=X^-_{(s,t)}(x)=x-\b(t-s)\text{ for all $x\in\R$, eventually as $s\ua t$ or $t\da s$}.
\end{align}

Fix $e=(s,x)\in\R^2$, and define two processes $X^{e,-}_t$ and $X^{e,+}_t$,
both starting from $e$, by setting $X_t^{e,\pm}=X^\pm_{(s,t]}(x)$ for $t\ge s$.
Then $X^{e,-}$ and $X^{e,+}$ are both piecewise continuous, cadlag, and satisfy the integral equations
$$
X_t^{e,\pm}=x+\int_{(s,t]\times[0,1)}\tilde f^\pm(X^{e,\pm}_{r-}-z)(\mu-\nu)(dr,dz),
\q t\ge s.
$$
Under $\PP$, we have $X^{e,-}_t=X^{e,+}_t$ for all $t\ge s$, almost surely.
We shall therefore drop the $\pm$ from now on and write simply $X^e$.
Write $\mu^f_e$ for the distribution of $X^e$ under $\PP$
on the Skorokhod space $D_e=D_x([s,\infty),\R)$ of cadlag paths starting from $x$ at time $s$.
Write $\mu_e$ for the distribution on $D_e$ of a standard Brownian motion
starting from $e$.

In the context of planar aggregation $f$ describes the action, on the boundary of the unit disc, of the 
conformal map corresponding to the arrival of a particle. This action is descibed precisely in 
Section \ref{model}. Where particles are symmetric around the axis through their attachment point, $\b=0$. 
Asymmetric particles can give rise to non-zero values of $\b$ and in these cases a drift is induced for which we have compensated.

\begin{proposition}\label{LBM}
We have $\mu^f_e\to\mu_e$ weakly on $D_e$, uniformly in $f\in\cD^*$ as $\rho(f)\to\infty$.
\end{proposition}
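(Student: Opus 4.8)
The plan is to establish weak convergence of $\mu^f_e$ to $\mu_e$ via the standard two-part strategy for weak convergence on Skorokhod space: (i) tightness of the family $\{\mu^f_e : f\in\cD^*\}$ as $\rho(f)\to\infty$, and (ii) convergence of finite-dimensional distributions to those of Brownian motion. Since the paper asks for uniformity in $f$, both parts must be quantified in terms of $\rho=\rho(f)$ alone, using only the normalizations $\rho\int_0^1\tilde f^2=1$ and $\rho\int_0^1\tilde f=\beta$ from \eqref{lambdadef}.

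First I would analyze the process $X^e$ via its integral equation
$$
X_t^{e}=x+\int_{(s,t]\times[0,1)}\tilde f(X^{e}_{r-}-z)(\mu-\nu)(dr,dz),\q t\ge s.
$$
Since $\mu-\nu$ is a compensated Poisson random measure with intensity $\rho\,dr\,dz$ and, by periodicity, $\int_0^1\tilde f(y-z)\,dz=\int_0^1\tilde f(z)\,dz=\beta/\rho$ and $\int_0^1\tilde f(y-z)^2\,dz=1/\rho$ for every $y$, the process $X^e_t-(x-\beta(t-s))$ is a martingale with predictable quadratic variation exactly $(t-s)$ — in fact $X^e$ is a Lévy process of mean zero (after the drift compensation built into the construction) with Lévy measure supported on jumps of size $\tilde f(y-z)$, whose total second moment rate is $1$ and whose jumps are bounded by $\|\tilde f\|_\infty$. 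The key point is that $\|\tilde f\|_\infty\to 0$ as $\rho\to\infty$: indeed $\mathrm{ess\,sup}|\tilde f|$ is controlled by the fact that $\tilde f$ is the increment function of a degree-$1$ monotone map, so its oscillation is at most its range, and $\rho\int_0^1\tilde f^2=1$ forces that range to shrink like $\rho^{-1/2}$ up to the monotonicity structure (this needs a short lemma: a non-decreasing $f^+$ with $f^+(x+1)=f^+(x)+1$ has $\tilde f^\pm$ with total variation over a period at most $2$, hence bounded range, and the $L^2$ normalization then pins the magnitude). With $\|\tilde f\|_\infty\to0$, the jumps vanish uniformly.

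Given this, part (ii) follows from the martingale central limit theorem (or equivalently the Lévy–Khinchine / characteristic-function computation): for fixed times $s=t_0<t_1<\dots<t_k$, the increments of $X^e$ over $[t_{i-1},t_i]$ are independent, mean $0$, variance $t_i-t_{i-1}$, and built from jumps of size $O(\|\tilde f\|_\infty)\to0$, so the Lindeberg condition holds and the joint law converges to that of independent Gaussian increments, i.e. to $\mu_e$ on finite-dimensional marginals. For part (i), tightness on $D_e$, I would apply Aldous's criterion or the standard modulus-of-continuity estimate for cadlag martingales: $\E\big[(X^e_{t}-X^e_{\sigma})^2\,\big|\,\cF_\sigma\big]\le |t-\sigma| + (\beta/\rho)^2|t-\sigma|^2$ uniformly over stopping times $\sigma$, which gives the Aldous tightness condition with constants independent of $f$, while the maximal jump size $\|\tilde f\|_\infty\to0$ handles the $J_1$-oscillation term so that the limit points are supported on continuous paths. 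Combining (i) and (ii) identifies the unique limit as $\mu_e$, and since every bound used depends on $f$ only through $\rho(f)$, the convergence is uniform over $\cD^*$ as $\rho(f)\to\infty$.

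The main obstacle I anticipate is the uniform control $\|\tilde f\|_\infty\to0$: everything else is routine martingale/Lévy-convergence machinery, but the statement's uniformity over \emph{all} of $\cD^*$ rests entirely on translating the single scalar constraint $\rho\int_0^1\tilde f^2\,dx=1$ into a sup-norm bound, and this genuinely uses the degree-$1$ monotonicity (without it, $\tilde f$ could be a tall narrow spike with small $L^2$ norm but large jumps, and the result would fail). So the crux is the lemma: \emph{if $f^+\in\cR$ then $\tilde f^+$ has range of length at most $1$} (its values lie in an interval of length $\le 1$ because $f^+$ is non-decreasing of degree $1$), hence $\|\tilde f\|_\infty\le 1$ always, and more sharply the second-moment normalization together with this a priori boundedness forces the relevant truncated-variance and Lindeberg quantities to vanish at a rate depending only on $\rho$. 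Once that is in hand the proof assembles cleanly.
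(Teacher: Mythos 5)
Your overall strategy — tightness on $D_e$ via a moment/Aldous criterion, identification of limit points as Brownian motion using that jumps vanish uniformly, all hinging on a uniform sup-norm estimate for $\tilde f$ — is essentially the paper's proof. The paper also notes $\E(|X_{t_1}-X_{t_2}|^2)=|t_1-t_2|$ for tightness, then passes directly to the martingale characterization of any weak limit (that $Z_t$ and $Z_t^2-t$ are continuous local martingales) rather than checking finite-dimensional distributions via Lindeberg; both routes are fine and are standard alternatives.

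However, your treatment of the crucial lemma $\|\tilde f\|_\infty\to0$ contains a genuine gap. You assert that the crux is ``if $f^+\in\cR$ then $\tilde f^+$ has range of length at most $1$,'' and that this a priori boundedness combined with $\rho\int_0^1\tilde f^2\,dx=1$ forces the sup-norm (and hence the Lindeberg quantities) to vanish. That implication is false: a uniform bound $\|\tilde f\|_\infty\le1$ together with $\int_0^1\tilde f^2\to0$ does not force $\|\tilde f\|_\infty\to0$ — a spike of height $1/2$ on a set of measure $1/\rho$ satisfies both. The correct argument, which is the paper's estimate \eqref{FR3}, uses the monotonicity of $f^+$ in a quantitatively different way: since $f^+$ is non-decreasing, $\tilde f$ decreases at rate at most $1$, so if $|\tilde f(x_0)|=a$ then $|\tilde f|\ge a/2$ on an interval of length $a/2$, giving $\int_0^1\tilde f^2\ge a^3/8$ and hence $\|\tilde f\|\le 2\rho^{-1/3}$. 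Note the exponent is $1/3$, not the $\rho^{-1/2}$ you guessed, and the bound does not follow from ``range $\le1$'' but from the slope constraint $(\tilde f)'\ge-1$. Once this estimate is in hand, the rest of your argument (tightness, FDD convergence, identification of the limit) goes through as you describe.
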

\begin{proof}
We drop the superscript $e$ within the proof to lighten the notation.
Under $\PP$, the process $(X_t)_{t\ge s}$ is a martingale. In fact, it is a
L\' evy process, with characteristic exponent $\chi$ given by
$$
\chi(\th)=\rho\int_0^1\left\{e^{i\th\tilde f(z)}-1-i\th\tilde f(z)\right\}dz,\q\th\in\R.
$$
In particular, we have
$\E(|X_{t_1}-X_{t_2}|^2)=|t_1-t_2|$ for all $t_1,t_2\ge s$.
A standard criterion (see for example \cite[page 143]{B} or \cite[page 355]{MR1943877}) allows us
to deduce that the family of laws $(\mu^f_e:f\in\cD^*)$ is tight in $D_e$.
We note that the process $(X_t^2-t)_{t\ge s}$ is also a martingale,
and that the jumps of $(X_t)_{t\ge s}$ are bounded in absolute
value by $\|\tilde f\|=\sup_{x\in\R}|\tilde f(x)|$.
Let $\mu$ be any weak limit law for the limit $\rho\to\infty$.
Write $(Z_t)_{t\ge s}$ for the coordinate process on $D_e$.
Under $\mu$, by standard arguments,
both $(Z_t)_{t\ge s}$ and $(Z_t^2-t)_{t\ge s}$
are local martingales in the natural filtration of $(Z_t)_{t\ge s}$.
By using the fact that $f$ is non-decreasing, we can obtain for all $\rho\ge1$ the estimate
\begin{equation}\label{FR3}
\|\tilde f\|\le2\rho^{-1/3}.
\end{equation}
Hence $\mu$ is supported on continuous paths and must therefore be $\mu_e$
by L\' evy's characterization of Brownian motion.
\end{proof}

We next fix a sequence $E=(e_k:k\in\N)$ in $\R^2$, where $e_k=(s_k,x_k)$ say,
and consider the sequence of processes $X^E=(X^k:k\in\N)$, where $X^k=X^{e_k}$.
Then $X^E$ is a random variable in the complete separable
metric space $D_E=\prod_{k=1}^\infty D_{e_k}$, where we define the metric $d_E$ on $D_E$ by
$$
d_E(\xi,\xi')=\sum_{k=1}^\infty 2^{-k}\{d(\xi^k,{\xi'}^k)\wedge 1\},
$$
and where $d$ denotes appropriate instances of the Skorokhod metric.
Write $\mu^f_E$ for the distribution of $X^E$ on $D_E$ under $\PP$.

We consider now a limit in which the basic map $f$ is an increasingly well-localized perturbation of
the identity, where we quantify this property in terms of the smallest constant
$\l=\l(f)\in(0,1]$ such that
$$
\rho\int_0^1|\tilde f(x+a)\tilde f(x)|dx\le\l,\q a\in[\l,1-\l].
$$
Denote by $(Z^k_t)_{t\ge s_k}$ the $k$th coordinate process on $D_E$,
given by $Z_t^k(\xi)=\xi^k_t$, and consider the
filtration $(\cZ_t)_{t\in\R}$ on $D_E$, where $\cZ_t$ is
the $\s$-algebra generated by $(Z^k_s:s_k<s\le t\vee s_k,k\in\N)$.
Write $C_E$ for the (measurable) subset of $D_E$ where each coordinate path is continuous.
Define also
$$
T^{jk}=\inf\{t\ge s_j\vee s_k:Z_t^j-Z_t^k\in\Z\}.
$$
We are thinking of the paths $(Z^k_t)_{t\ge s_k}$ as liftings of paths in the circle. Thus
$T^{jk}$ is the collision time of these circle-valued paths.
The following is a convenient reformulation of a result of Arratia \cite{A79}.
\begin{proposition}\label{ARRA}
There exists a unique Borel probability measure $\mu_E$
on $D_E$ under which, for all $j,k\in\N$, the processes
$(Z^k_t)_{t\ge s_k}$ and $(Z^j_tZ^k_t-(t-T^{jk})^+)_{t\ge s_j\vee s_k}$ are
both continuous local martingales in the filtration $(\cZ_t)_{t\in\R}$.
\end{proposition}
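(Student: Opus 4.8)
The plan is to reduce everything to the finite-point motions: first show that the stated martingale conditions pin down uniquely the law of $(Z^k)_{k\in F}$ for every finite $F\sse\N$ (so that $\mu_E$, if it exists, is unique), and then construct such a $\mu_E$ explicitly from Arratia's coalescing system, consistency of the finite-point motions being a by-product of the uniqueness.

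\textbf{Uniqueness of the finite-point motions.} Let $\mu_E$ have the stated property and fix a finite $F\sse\N$. Taking $j=k$ we have $T^{kk}=s_k$, so $(Z^k_t)_{t\ge s_k}$ and $((Z^k_t)^2-(t-s_k))_{t\ge s_k}$ are continuous local martingales, whence $Z^k$ is a Brownian motion from $e_k$ by L\'evy's characterization. Now argue by induction on $|F|$. Let $T=\min_{j,k\in F}T^{jk}$ be the first collision time; it is an $(\cZ_t)$-stopping time. From $\langle Z^j,Z^k\rangle_t=(t-T^{jk})^+$ we get $\langle Z^j,Z^k\rangle_{t\wedge T}=0$ for $j\ne k$, so the $\R^F$-valued continuous local martingale stopped at $T$ has pairwise orthogonal brackets (off the various starting times), and the multidimensional L\'evy characterization shows the $Z^k$, $k\in F$, run as independent Brownian motions from their respective starting points up to time $T$. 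In particular no three of them differ by integers at time $T$, and $T=T^{j_0k_0}$ for a unique pair $j_0,k_0$. For $t\ge T$ the bracket $\langle Z^{j_0}-Z^{k_0}\rangle_t$ is constant, so the continuous local martingale $Z^{j_0}-Z^{k_0}$ is constant on $[T,\infty)$, equal to the integer $Z^{j_0}_T-Z^{k_0}_T$: the two particles are permanently glued, and each has the same integer collision relation with every other particle as its partner. Conditioning on $\cZ_T$ and using the strong Markov property, the post-$T$ evolution of the $|F|-1$ distinct particles satisfies the same characterization from its time-$T$ positions, so its law is determined by the inductive hypothesis; together with the (already identified) law up to $T$ this determines the law of $(Z^k)_{k\in F}$, and hence all finite-dimensional distributions of $\mu_E$.

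\textbf{Existence.} For finite $F$, build $(Z^k)_{k\in F}$ by taking circle-valued Brownian motions from the points $e_k$, run independently until they meet and coalescing thereafter (Arratia \cite{A79}), and lifting to $\R$ so that a path which has coalesced with another copies the increments of that other path after the meeting time; then $Z^j_t-Z^k_t$ equals the integer $Z^j_{T^{jk}}-Z^k_{T^{jk}}$ for $t\ge T^{jk}$. Each $Z^k$ is a Brownian motion from $e_k$, a direct computation of brackets gives $\langle Z^j,Z^k\rangle_t=(t-T^{jk})^+$, and since the future increments of any cluster are independent of $\cZ_t$ these are (local) martingales in the full filtration; so this finite system has the stated property. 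Its marginal on any $F'\sse F$ again has that property, hence by the uniqueness just proved equals the corresponding system, so the finite systems are consistent. By Kolmogorov extension there is a law on the coordinate space with these finite-dimensional distributions, and since every coordinate is a Brownian motion there is a continuous modification, giving the required Borel probability measure $\mu_E$ on $C_E\sse D_E$.

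\textbf{The main obstacle.} The crux is the strong Markov / regeneration step inside the uniqueness induction: one must verify that the martingale characterization is preserved under conditioning at the collision time $T$, i.e.\ that the shifted coordinate processes satisfy the same characterization with the collision times $T^{jk}$ correctly re-read from the shifted paths. Equivalently this is well-posedness of the martingale problem for the $\R^F$-valued process with the path-dependent, degenerate diffusion matrix $a_{jk}(t)=\mathbf 1_{\{t\ge T^{jk}\}}$, which (after relabelling) is block-diagonal with rank-one all-ones blocks indexed by the current clusters. I expect the clean route is a pathwise-uniqueness argument on the successive inter-collision intervals — on each of which the system is an affine function of a driving Brownian motion — combined with a Yamada--Watanabe-type passage from pathwise uniqueness to uniqueness in law; making the bookkeeping of the stopping times and filtrations (and the distinct starting times $s_k$) rigorous is where the real work lies.
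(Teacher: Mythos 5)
Your argument is sound in outline but takes a genuinely different route from the paper's. For uniqueness you stop at the first collision time and induct on the number of surviving clusters, invoking the strong Markov property to regenerate the characterization from the collision configuration; the paper instead uses a global ``resurrection'' trick: given any law satisfying the martingale conditions, one enlarges the probability space with a supply of fresh independent Brownian motions to revive each path at the moment it coalesces, and then applies L\'evy's characterization \emph{once} to the full resurrected family to conclude it consists of jointly independent Brownian motions, from which the original system is recovered by the deterministic delete-on-collision rule. The resurrection argument buys exactly what you flag as the main obstacle in your proposal: it replaces the delicate stopping-time/regeneration bookkeeping at each collision by a single application of L\'evy's characterization in an enlarged filtration, so there is no induction and no need to check well-posedness of a path-dependent martingale problem interval by interval. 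Your existence step (Arratia's coalescing circle Brownian motions lifted to $\R$, plus Kolmogorov extension) matches the paper's in substance. Since the paper itself only sketches this proposition, your level of detail is comparable; but you should be aware that the resurrection idea gives a cleaner route through the step you correctly identify as the crux, and your own sketch is not complete until that strong Markov regeneration is actually carried out (it is not automatic that the stated martingale characterization is closed under conditioning at $T$ with the collision times re-read from the shifted paths, which is precisely why the paper avoids it).
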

Of course $\mu_E$ is supported on $C_E$ and may naturally be considered as a measure
defined there.
We sketch a proof. For existence, one can take independent
Brownian motions from each of the given time-space starting points
and then impose a rule of coagulation on collision, deleting the path of lower index.
The law of the resulting
process has the desired properties. On the other hand, given a probability
measure such as described in the proposition, on some larger
probability space, one can use a supply of independent Brownian motions
to resurrect the paths deleted at each collision. Then L\' evy's
characterization can be used to see that one has recovered
the set-up used for existence. This gives uniqueness.

\begin{proposition}\label{LAF}
We have $\mu^f_E\to\mu_E$ weakly on $D_E$,
uniformly in $f\in\cD^*$, as $\rho(f)\to\infty$ and $\l(f)\to0$.
\end{proposition}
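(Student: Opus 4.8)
The plan is to upgrade the single-path convergence of Proposition \ref{LBM} to joint convergence of the countable family $X^E$, the essential new content being that the limiting $n$-point motions coalesce rather than remaining independent after collision. By Proposition \ref{ARRA}, it suffices to show that every weak limit law $\mu$ of $\mu^f_E$ (along a subsequence with $\rho\to\infty$, $\l\to0$) is supported on $C_E$ and, for all $j,k$, makes $(Z^k_t)$ and $(Z^j_tZ^k_t-(t-T^{jk})^+)$ continuous local martingales in $(\cZ_t)$. Tightness in $D_E$ is immediate: each coordinate is tight in $D_{e_k}$ by the argument already given in the proof of Proposition \ref{LBM} (the second-moment increment bound plus the jump bound \eqref{FR3}), and a product of tight families is tight in the product space $D_E$. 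So the whole proof reduces to identifying the limit.

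First I would record the martingale structure available before passing to the limit. For the given $f$, write $M^k_t=X^k_t$ and note each $M^k$ is a mean-zero L\'evy martingale with $\E((M^k_t-M^k_s)^2)=t-s$; more importantly, for $j\ne k$ the bracket of the two martingales is governed by the localization parameter: using the defining inequality for $\l(f)$, the predictable covariation of $M^j$ and $M^k$ accumulates at rate at most of order $\l$ while the (lifted) circle-distance $Z^j-Z^k$ exceeds $\l$, and it is genuinely nonzero only through the joint jumps, which occur when a single Poisson atom $(t,z)$ is within $\|\tilde f\|$ of both particles. Concretely, $M^j_tM^k_t-\<M^j,M^k\>_t$ is a martingale with $d\<M^j,M^k\>_t=\rho\big(\int_0^1\tilde f(X^j_{t}-z)\tilde f(X^k_{t}-z)\,dz\big)dt$, and this density is bounded by $\l$ as soon as the two particles are more than $\l$ apart on the circle, and is bounded by $1$ always (Cauchy--Schwarz with \eqref{lambdadef}). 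Passing to a weak limit $\mu$: since $\|\tilde f\|\to0$ by \eqref{FR3}, $\mu$ is supported on $C_E$; each $Z^k$ is a continuous local martingale with $\<Z^k\>_t=t-s_k$, hence a Brownian motion; and for $j\ne k$ the process $Z^j_tZ^k_t-A^{jk}_t$ is a continuous local martingale in $(\cZ_t)$ for some continuous adapted increasing process $A^{jk}$ with $0\le dA^{jk}_t\le dt$, and moreover $dA^{jk}_t=0$ on $\{Z^j_t\ne Z^k_t \bmod \Z\}$ because on that event the pre-limit density was $O(\l)\to0$ (here one localizes on the event that the circle-distance stays bounded below, and uses that collision times converge).

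The crux is then to show $A^{jk}_t=(t-T^{jk})^+$, i.e. that once two limiting particles meet on the circle they stay together. Set $Y_t=Z^j_t-Z^k_t$; it is a continuous local martingale vanishing at $T^{jk}$ (the limit of the collision times, which I must check behaves well — this uses that $Z^j-Z^k$ is approximately a time-changed Brownian motion so does not have an atypically slow crossing), with $\<Y\>_t=2(t-T^{jk})-2(A^{jk}_t-A^{jk}_{T^{jk}})$ for $t\ge T^{jk}$, and with $d\<Y\>_t=0$ whenever $Y_t\notin\Z$. So $Y$ is, after $T^{jk}$, a continuous local martingale whose quadratic variation only grows at the (countable) set of times $Y\in\Z$; such a process is constant, hence $Y_t\equiv0\bmod\Z$ for $t\ge T^{jk}$. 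This forces $A^{jk}_t=(t-T^{jk})^+$ for $t\ge T^{jk}$, and on $[s_j\vee s_k,T^{jk})$ we have $A^{jk}\equiv0$ since $Y\notin\Z$ there. This is precisely the characterization of Proposition \ref{ARRA}, so $\mu=\mu_E$, and since the limit is the same along every subsequence the full convergence $\mu^f_E\to\mu_E$ holds, uniformly as $\rho\to\infty$, $\l\to0$.

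I expect the main obstacle to be the passage to the limit in the covariation statement "$dA^{jk}_t=0$ off the diagonal", carried out uniformly in $f$: one needs that the pre-limit bracket, which is controlled by $\l$ only at distances exceeding $\l$ and by the crude bound $1$ at small distances, contributes negligibly in the limit near times when the particles are close but not yet coalesced. This requires an occupation-time estimate showing the limiting (Brownian) difference $Y$ spends asymptotically no time in the $\l$-neighborhood of $\Z$ on which the bound degrades — a standard local-time/occupation-density argument, but one that must be made uniform in $f$ and is the technical heart of the proof. The identification of $T^{jk}$ as a genuine limit of pre-limit collision times, and the verification that $Y$ does not "stick" to $\Z$ before $T^{jk}$ in a way that would inflate $\<Y\>$, are the subsidiary points to handle alongside it.
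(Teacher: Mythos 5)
Your overall framework—tightness via Proposition \ref{LBM}, then identification of any subsequential limit $\mu$ through the martingale characterization of Proposition \ref{ARRA}—matches the paper, and your observation that $\langle Z^j, Z^k\rangle=0$ before $T^{jk}$ (via the localization argument using $|b(x,x')|\le\l$ off the diagonal, which only requires stopping before first approach to $\Z$) is exactly right and is what the paper does too. The gap is in the coalescence step. You claim $dA^{jk}_t=0$ on $\{Y_t\notin\Z\}$ and then assert $d\langle Y\rangle_t=0$ on the same set, but these are contradictory: since $\langle Y\rangle_t=\langle Z^j\rangle_t+\langle Z^k\rangle_t-2A^{jk}_t$, the fact that $dA^{jk}=0$ off the diagonal gives $d\langle Y\rangle = 2\,dt$ off the diagonal, not $0$. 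You have the picture exactly backwards: $\langle Y\rangle$ is frozen \emph{on} the diagonal (where the covariation density equals $1$) and grows at full rate away from it. Consequently the conclusion ``a continuous local martingale whose quadratic variation only grows on $\{Y\in\Z\}$ is constant'' is applied to a situation where its hypothesis fails. Indeed, with only the bracket information $dA^{jk}=0$ off the diagonal, $\mu$ could be the law of independent Brownian motions (for which $A^{jk}\equiv0$), so the argument cannot force coalescence.

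The missing ingredient is the non-crossing property, which is inherited from $\mu^f_E$: almost surely under $\mu$, for every $n\in\Z$ the process $Z^j_t-Z^k_t+n$ does not change sign. Setting $n_0=Z^j_{T^{jk}}-Z^k_{T^{jk}}\in\Z$, the process $Y_t-n_0$ is a continuous local martingale of constant sign hitting $0$ at $T^{jk}$; by optional stopping it stays at $0$, so $Y$ is constant for $t\ge T^{jk}$, and hence $A^{jk}_t=(t-T^{jk})^+$. This one-sidedness is what rules out the reflecting or independent alternatives, and it also makes the occupation-time estimate you flag as the ``technical heart'' unnecessary: once non-crossing is used, $dA^{jk}=0$ is only needed before $T^{jk}$, where $Y$ is bounded away from $\Z$ and the stopping-time localization suffices.
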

\begin{proof}
The families of marginal laws $(\mu^f_{e_k}:f\in\cD^*)$ are all tight, as shown
in Proposition \ref{LBM}.
Hence the family of laws $(\mu^f_E:f\in\cD^*)$ is also tight.
Let $\mu$ be any weak
limit law for the limits $\rho\to\infty$ and $\l\to0$. Under $\PP$, for $j,k$
distinct, the process
$$
X^j_tX^k_t-\int_{s_j\vee s_k}^tb(X^j_s,X^k_s)ds, \q t\ge s_j\vee s_k,
$$
is a martingale, where
$$
b(x,x')=\rho\int_0^1\tilde f(x-z)\tilde f(x'-z)dz.
$$
We have $|b(x,x')|\le\l$ whenever $\l\le|x-x'|\le1-\l$. Hence, by standard
arguments, under $\mu$, the process $(Z^j_tZ^k_t:s_j\vee s_k\le t<T^{jk})$ is a local
martingale. We know from the proof of Proposition \ref{LBM} that, under $\mu$, the processes
$(Z^j_t:t\ge s_j)$, $((Z^j_t)^2-t:t\ge s_j)$ and $(Z^k_t:t\ge s_k)$ are continuous local martingales.
But $\mu$ inherits from the laws $\mu^f_E$ the property
that, almost surely, for all $n\in\Z$, the process $(Z^j_t-Z^k_t+n:t\ge s_j\vee s_k)$ does not change sign.
Hence, by an optional stopping argument, $Z^j_t-Z^k_t$ is constant for $t\ge T^{jk}$.
It follows that $(Z^j_tZ^k_t-(t-T^{jk})^+)_{t\ge s_j\vee s_k}$ is a continuous local martingale.
Hence $\mu=\mu_E$, by Proposition \ref{ARRA}.
\end{proof}

We note that the characterizing property of $\mu_E$ is invariant under a permutation of the
sequence $(e_k:k\in\N)$, as is the topology of the metric space $D_E$. 
Hence we can define for any countable set $E\sse\R^2$, a unique Borel probability measure 
on $\mu_E$ on $D_E=\prod_{e\in E}D_e$, having the given property. Then, by Proposition \ref{LAF}, the 
distribution $\mu_E^f$ of $X^E=(X^e:e\in E)$ on $D_E$ converges weakly to $\mu_E$ as $\rho(f)\to\infty$
and $\l(f)\to0$.

\section{A new state-space for the coalescing Brownian flow}
\label{CBF}
The preceding statement is unsatisfactory in that it expresses convergence
of our L\' evy flows only for a given countable set of time-space starting points. To remedy this, we
must first formulate a suitable limit object. 
This object is known as Arratia's flow, or the Brownian web,
and has been studied in some depth. However, we have found it convenient to introduce a new
state-space of flows, which we now describe.

We begin by defining a metric on $\cD$.
Let $\cS$ denote the set of all periodic contractions on $\mathbb{R}$ having period 1.
Each $f\in\cD$ can be identified with some $f^\times\in\cS$ by drawing new
axes at an angle $\pi/4$ with the old, and scaling
appropriately. See Figure \ref{Phimap}.
\begin{figure}[ht]
  \centering
  \epsfig{file=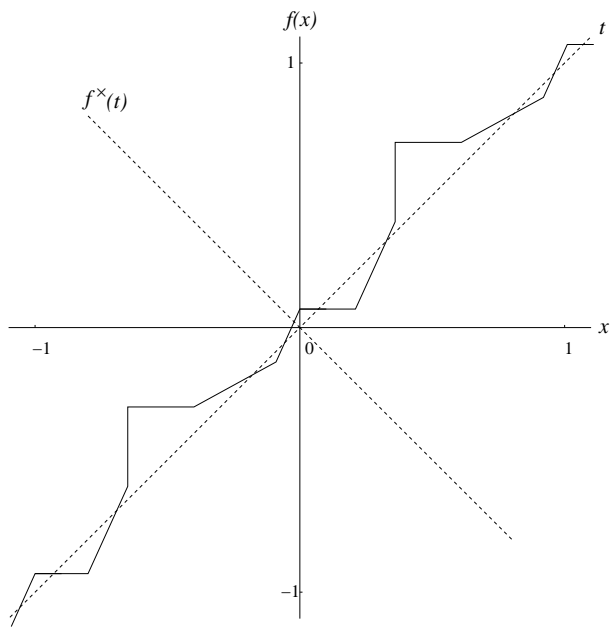, width=8cm}
  \caption{\textsl{The map $f^\times$ obtained from $f$ by rotating
  the axes by $\frac{\pi}{4}$.}}
\label{Phimap}
\end{figure}
More formally, since $x+f^+(x)$ is strictly increasing in $x$, there is for each $t\in\R$
a unique $x\in\R$ such that
$$
\frac{x+f^-(x)}2\le t\le\frac{x+f^+(x)}2.
$$
Define $f^\times(t)=t-x$. Note that $\id^\times=0$.
Then {\em the map $f\mapsto f^\times:\cD\to\cS$ is a bijection},
so we can define a metric $d_\cD$ on $\cD$ by
$$
d_\cD(f,g)=\|f^\times-g^\times\|=\sup_{t\in[0,1)}|f^\times(t)-g^\times(t)|.
$$
A proof of the italicized assertion is given in the Appendix. The same is true for some 
further technical assertions which will be made below, written also in italics. 
The metric space $(\cS,\|\dots\|)$ is complete and locally compact, 
so the same is true for $(\cD,d_\cD)$.
An alternative characterization\footnote{Thus, $d_\cD$ is a close relative
of the L\' evy metric sometimes used on the set of distribution functions for real random
variables. The relationships of such a metric to the operations of composition and inversion 
in $\cD$, which are significant for us, do not appear to have been studied.}
of the metric $d_\cD$ is as follows: {\em for $f,g\in\cD$ and $\ve>0$, we have}
$$
d_\cD(f,g)\le\ve\iff f^-(x-\ve)-\ve\le g^-(x)\le g^+(x) \le f^+(x+\ve)+\ve
\text{ {\em for all }}x\in\R.
$$
We deduce that, for $f,g\in\cD$,
$$
d_\cD(f,g)\le\|f-g\|,\q 2d_\cD(f,\id)=\|f-\id\|,
$$
and
$$
d_\cD(f,g\circ f)\le\|g-\id\|\text{ when $g\circ f\in\cD$},\q
d_\cD(f,f\circ g)\le\|g-\id\|\text{ when $f\circ g\in\cD$}.
$$
Moreover, {\em for any sequence $(f_n:n\in\N)$ in $\cD$,
$$
f_n\to f\iff f_n(x)\to f(x)\text{ at every point $x$ where $f$ is continuous}.
$$}
Here and below, we write $f_n\to f$ to mean convergence in the metric $d_\cD$.

We now define our space of flows. We call them weak flows to emphasise that the
usual flow property may fail at points of spatial discontinuity.
Consider $\phi=(\phi_{ts}:s,t\in\R,s<t)$, with $\phi_{ts}\in\cD$ for all $s,t$.
Say that $\phi$ is a {\em weak flow} if
$$
\phi_{ut}^-\circ\phi_{ts}^-\le\phi_{us}^-\le\phi_{us}^+\le\phi_{ut}^+\circ\phi_{ts}^+,
\q s<t<u.
$$
Say that $\phi$ is {\em continuous} if, for all $t\in\R$,
$$
\phi_{ts}\to\id\text{ as $s\ua t$},\q\phi_{ut}\to\id\text{ as $u\da t$}.
$$
Write $C^\circ(\R,\cD)$ for the set of all continuous weak flows\footnote{\label{CDZ}In Section \ref{SLAM} we shall work with
a modified space of flows $C^\circ((0,\infty),\cD_0)$. Here we restrict to intervals $I\sse(0,\infty)$, and we take
$\phi_I\in\cD_0$, where $\cD_0$ is the set of circle maps whose liftings are in $\cD$. Given the continuity of the map (\ref{PHIC})
and the requirement $\phi_{ss}=\id$, there is an obvious identification of this space with $C^\circ((0,\infty),\cD)$}.
It will be convenient sometimes to extend a continuous weak flow $\phi$ to the diagonal, which we do
by setting $\phi_{ss}=\id$ for all $s\in\R$. 
Then, {\em for any $\phi\in C^\circ(\R,\cD)$, the map
\begin{equation}\label{PHIC}
(s,t)\mapsto\phi_{ts}:\{(s,t):s\le t\}\to\cD
\end{equation}
is continuous.}

Define, for $\phi,\psi\in C^\circ(\R,\cD)$,
$$
d_C(\phi,\psi)=\sum_{n=1}^\infty2^{-n}\{d_C^{(n)}(\phi,\psi)\wedge1\},
$$
where
$$
d_C^{(n)}(\phi,\psi)=\sup_{s,t\in(-n,n),s<t}d_{\cD}(\phi_{ts},\psi_{ts}).
$$
Then $d_C$ is a metric on $C^\circ(\R,\cD)$, {\em under which this space is
complete and separable}.
{\em For this metric, for all $s,t\in\R$ with $s<t$, and all $x\in\R$, the {\rm evaluation map}
$$
\phi\mapsto\phi_{ts}^+(x):C^\circ(\R,\cD)\to\R
$$ 
is continuous.
Moreover the Borel $\s$-algebra on $C^\circ(\R,\cD)$ is generated by
the set of all such evaluation maps with $s,t$ and $x$ rational.}
                                                                                                                                                                            
{\em For $e=(s,x)\in\R^2$ and $\phi\in C^\circ(\R,\cD)$, the map
$$
t\mapsto\phi_{ts}^+(x):[s,\infty)\to\R
$$
is continuous.} Hence we can define a measurable map
$Z^{e,+}:C^\circ(\R,\cD)\to C_e=C_x([s,\infty),\R)$ by
$$
Z^{e,+}(\phi)=(\phi_{ts}^+(x):t\ge s).
$$
Given a countable set $E\sse\R^2$, we then define a measurable map 
$Z^{E,+}:C^\circ(\R,\cD)\to C_E=\prod_{e\in E}C_e$ by
$$
Z^{E,+}(\phi)^e=Z^{e,+}(\phi).
$$
Set $C_E^{\circ,+}=\{Z^{E,+}(\phi):\phi\in C^\circ(\R,\cD)\}$.
Define similarly $Z^{e,-}$, $Z^{E,-}$ and $C_E^{\circ,-}$ and set $C_E^\circ=C_E^{\circ,+}\cap C_E^{\circ,-}$.

The following result translates into the language of continuous weak flows a result of T\'oth and Werner \cite[Theorem 2.1]{TW}, 
which itself was a variant of a result of Arratia \cite{A79}.
We shall give a complete proof, in part because we need most components of the proof also for our main convergence result,
and in part because our framework leads to some simplifications, for example in the probabilistic underpinnings
contained in Proposition \ref{MPI}.
The formulation in terms of continuous weak flows has advantages, in 
leading to a unique object, with a natural time-reversal invariance, and for the derivation of weak limits.
Recall that, for a countable set $F\sse\R^2$, we denote by $\mu_F$ the law on $C_F$ of a family of coalescing Brownian motions starting
from $F$, as discussed at the end of the preceding section.

\begin{theorem}\label{UBPR}
There exists a unique Borel probability measure $\mu_A$ on $C^\circ(\R,\cD)$
such that, for any finite set $F\sse\R^2$, we have
\begin{equation}\label{MUW}
\mu_A\circ (Z^{F,+})^{-1}=\mu_F.
\end{equation}
Moreover, for all $e\in\R^2$, we have, $\mu_A$-almost surely, $Z^{e,+}=Z^{e,-}$. 
\end{theorem}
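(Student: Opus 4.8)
The plan is to build $\mu_A$ by a consistency/tightness argument, using the convergence of L\'evy flows established in Proposition~\ref{LAF} as the engine, together with the new state space $C^\circ(\R,\cD)$ which, being complete and separable, supports weak limits of probability measures. First I would fix a sequence $f_n\in\cD^*$ with $\rho(f_n)\to\infty$ and $\l(f_n)\to0$ — for instance scaled, increasingly localized perturbations of the identity — and consider the associated L\'evy flows $X^{(n)}=(X^{(n)}_I:I\sse\R)$ as constructed in Section~\ref{LFC}. The key point is that each such flow, being built from composition of the maps $F_t$, is (almost surely) a continuous weak flow: the non-crossing property \eqref{LWF} together with the semigroup property gives the weak-flow inequalities, and property \eqref{LPC} gives continuity in time. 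So each $X^{(n)}$ induces a Borel probability measure $\nu_n$ on $C^\circ(\R,\cD)$.

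The main work is to show $(\nu_n)$ is tight on $C^\circ(\R,\cD)$. Since the metric $d_C$ is a weighted sum of sups of $d_\cD(\phi_{ts},\psi_{ts})$ over bounded time windows, and since $(\cD,d_\cD)$ is locally compact, tightness should reduce to (i) a bound keeping $\phi_{ts}$ in a compact subset of $\cD$ with high probability — controllable via $2d_\cD(\phi,\id)=\|\phi-\id\|$ and the martingale/$L^2$ estimates from the proof of Proposition~\ref{LBM}, which give $\E|X^{e}_t-x|^2 = t-s$ uniformly in $f$ — and (ii) an equicontinuity-in-$(s,t)$ estimate for the map \eqref{PHIC}, again obtainable from the same second-moment bounds plus a maximal inequality for the martingale $X^e$, using that the modulus $d_\cD(\phi_{ts},\id)$ is controlled by the spatial oscillation of the increment flow. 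I would phrase this as a Kolmogorov-type tightness criterion on the space of continuous $\cD$-valued flows; this is the step I expect to be the main obstacle, since one must control the flow \emph{uniformly in the spatial variable} and for \emph{all} pairs $s<t$ simultaneously, not just along a fixed countable family of starting points.

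Granting tightness, extract a weakly convergent subsequence $\nu_{n_k}\to\mu_A$. To identify the limit, use that for any finite $F\sse\R^2$ the evaluation map $Z^{F,+}:C^\circ(\R,\cD)\to C_F$ is continuous, so $\nu_{n_k}\circ(Z^{F,+})^{-1}\to\mu_A\circ(Z^{F,+})^{-1}$ weakly; but the left side is exactly $\mu^{f_{n_k}}_F$, which by Proposition~\ref{LAF} converges to $\mu_F$. Hence $\mu_A\circ(Z^{F,+})^{-1}=\mu_F$, which is \eqref{MUW}. Uniqueness follows because, as stated in the excerpt, the Borel $\s$-algebra on $C^\circ(\R,\cD)$ is generated by the rational evaluation maps $\phi\mapsto\phi^+_{ts}(x)$, and the finite-dimensional laws of these under any measure satisfying \eqref{MUW} are pinned down by the $\mu_F$ (first approximate $\phi^+_{ts}(x)$ for irrational $x$ by the non-crossing property from nearby rational starting points, using continuity of $\phi_{ts}$ at continuity points, cf.\ the characterization $f_n\to f\iff f_n(x)\to f(x)$ at continuity points of $f$).

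Finally, for the identity $Z^{e,+}=Z^{e,-}$ $\mu_A$-a.s.: it suffices to show $\phi^+_{ts}(x)=\phi^-_{ts}(x)$ for $\mu_A$-a.e.\ $\phi$, for each fixed $(s,t,x)$, since a countable family of such equalities (rational $s<t$, rational $x$) forces $Z^{e,+}=Z^{e,-}$ by right-continuity of $\phi^+$ and left-continuity of $\phi^-$ together with the squeeze $\phi^-_{ts}\le\phi^+_{ts}$ and continuity in time. For fixed $(s,t,x)$, the difference $\phi^+_{ts}(x)-\phi^-_{ts}(x)$ is nonnegative, and under $\mu_A$ both $\phi^\pm_{ts}(x)$ have, by \eqref{MUW} applied to singletons $F=\{e\}$, the law of a Brownian increment started at $x$ — in particular the same distribution; but more is true: one can run the argument of Proposition~\ref{LAF} with the \emph{pair} $(Z^{e,+},Z^{e,-})$ treated as a two-point configuration to see that after their (immediate, since they start equal) collision they coincide, exactly as the optional-stopping step there shows that $Z^j-Z^k$ is constant past $T^{jk}$ and here $T=s$. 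Alternatively, and more cleanly, note $\E(\phi^+_{ts}(x)-\phi^-_{ts}(x))=0$ from equality of the one-dimensional marginals, combined with $\phi^+_{ts}(x)\ge\phi^-_{ts}(x)$, forcing a.s.\ equality. This last route is the one I would write up.
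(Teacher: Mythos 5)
Your proposed construction of $\mu_A$ has a fundamental error that derails the existence argument. You assert that each L\'evy flow $X^{(n)}$, built by composing the jump maps $F_t$, is almost surely a \emph{continuous} weak flow, and therefore induces a measure $\nu_n$ on $C^\circ(\R,\cD)$. This is false: the L\'evy flows have genuine jumps in time at the atoms of the Poisson random measure, so they live in the cadlag space $D^\circ(\R,\cD)$, not $C^\circ(\R,\cD)$. Property \eqref{LPC} gives left and right limits equal to the identity, i.e.\ the cadlag property, not continuity. Consequently the tightness program you then outline, already flagged by you as the main obstacle, would have to be run on $D^\circ(\R,\cD)$, and you would additionally have to show that any weak limit is concentrated on the closed subspace $C^\circ(\R,\cD)$. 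This is essentially the content of Theorem~\ref{MAIN}, whose proof in the paper cites the proof of Theorem~\ref{UBPR}; so invoking convergence of L\'evy flows to establish Theorem~\ref{UBPR} would be circular in the paper's architecture.

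The paper avoids tightness altogether. Fix a countable $E\supseteq\Q^2$. The map $Z^{E,+}:C^\circ(\R,\cD)\to C_E^{\circ,+}$ is a bijection (a flow is determined by its right-continuous action on a dense countable set, via \eqref{PHIC}); the set $C_E^{\circ,+}$ and the inverse bijection $\Phi^{E,+}$ are measurable; and, crucially, Proposition~\ref{MPI} shows $\mu_E(C_E^\circ)=1$, where $\mu_E$ is the already-known law of coalescing Brownian motions from $E$ on path space. One then simply \emph{defines} $\mu_A=\mu_E\circ(\Phi^{E,+})^{-1}$ and checks consistency: independence of $E$, and $\mu_A\circ(Z^{F,+})^{-1}=\mu_F$ for finite $F$. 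No limit of flows is taken; all the hard probabilistic work is done in Proposition~\ref{MPI}, which you do not invoke and which your sketch does not replace. Your uniqueness argument (the rational evaluation maps generate the Borel $\sigma$-algebra, plus a $\pi$-system argument) does coincide with the paper's.

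Your argument for the final assertion $Z^{e,+}=Z^{e,-}$ $\mu_A$-a.s.\ is correct and genuinely different from the paper's. The paper deduces it from $\mu_A(Z^{E,+}\in C_E^\circ)=1$. You instead observe that for fixed $(s,t,x)$, \eqref{MUW} with singleton $F$ gives $\E_{\mu_A}[\phi^+_{ts}(x)]=x$, and, since $\phi^-_{ts}(x)=\lim_{y\ua x,\,y\in\Q}\phi^+_{ts}(y)$ monotonically with $\E_{\mu_A}[\phi^+_{ts}(y)]=y$, monotone convergence gives $\E_{\mu_A}[\phi^-_{ts}(x)]=x$ as well; combined with $\phi^-\le\phi^+$ this forces a.s.\ equality, and the countable union over rational $t$ plus continuity in $t$ finishes. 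This is a clean, self-contained shortcut. (Your first suggestion in that paragraph, treating $(Z^{e,+},Z^{e,-})$ as a coalescing pair via Proposition~\ref{LAF}, does not quite parse because \eqref{MUW} constrains only $Z^{F,+}$ and not $Z^{F,-}$; but you correctly prefer the expectation argument.)

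In short: the existence step needs to be replaced by the bijection/pushforward construction and, in particular, by Proposition~\ref{MPI}; your uniqueness step is fine; your last step is a valid and arguably neater alternative.
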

\begin{proof}
Fix a countable subset $E$ of $\R^2$ containing $\Q^2$.
Consider $\phi,\psi\in C^\circ(\R,\cD)$ and suppose that $Z^{E,+}(\phi)=Z^{E,+}(\psi)$. Then $\phi_{ts}^+(x)=\psi_{ts}^+(x)$ for all
$s,x\in\Q$ and all $t>s$. This extends to all $x\in\R$ because $\phi_{ts}^+$ and $\psi_{ts}^+$ are both right continuous,
and then to all $s\in\R$ using (\ref{PHIC}), so $\phi=\psi$. 
Hence the map $Z^{E,+}:C^\circ(\R,\cD)\to C_E^{\circ,+}$ is a bijection.
Write $\Phi^{E,+}$ for the inverse bijection $C^{\circ,+}_E\to C^\circ(\R,\cD)$. 
Similarly, write $\Phi^{E,-}$ for the inverse of the bijection $Z^{E,-}:C^\circ(\R,\cD)\to C_E^{\circ,-}$.
Then {\em $C^{\circ,+}_E$ and $\Phi^{E,+}$ are
measurable, we have $\mu_E(C^\circ_E)=1$, and $\Phi^{E,+}=\Phi^{E,-}$ on $C^\circ_E$.} 
So we can define a Borel probability measure $\mu_A$ on $C^\circ(\R,\cD)$ by
$$
\mu_A=\mu_E\circ(\Phi^{E,+})^{-1}.
$$
Then
$$
\mu_E=\mu_A\circ(Z^{E,+})^{-1}.
$$
For $F\sse E$, we have $Z^{F,+}=\pi_{E,F}\circ Z^{E,+}$,
and $\mu_F=\mu_E\circ\pi_{E,F}^{-1}$,
where $\pi_{E,F}:C_E\to C_F$ is the obvious projection.
So
\begin{equation*}
\mu_F=\mu_A\circ(Z^{E,+})^{-1}\circ\pi_{E,F}^{-1}=\mu_A\circ(Z^{F,+})^{-1}.
\end{equation*}
On the other hand, for $E\sse E'$, we have $Z^E=\pi_{E',E}\circ Z^{E'}$,
and $\mu_E=\mu_{E'}\circ\pi_{E',E}^{-1}$.
Then $\pi_{E',E}$ restricts to a bijection 
$C^{\circ,+}_{E'}\to C^{\circ,+}_E$, so we obtain that
$$
\mu_A=\mu_{E'}\circ\pi_{E',E}^{-1}\circ\Phi_E^{-1}=\mu_{E'}\circ\Phi_{E'}^{-1}.
$$
This shows that $\mu_A$ does not depend on $E$, and so (\ref{MUW})
holds for all finite sets $F$.
This property then characterizes $\mu_A$ by a standard $\pi$-system argument.
Finally, $\mu_A$-almost surely, $Z^{E,+}\in C_E^\circ$, so $Z^{E,+}=Z^{E,-}$,
so $Z^{e,+}=Z^{e,-}$ for all $e\in E$.
\end{proof}

We call any $C^\circ(\R,\cD)$-valued random variable with law $\mu_A$ a {\em coalescing Brownian flow}.
The relationship of this space and measure to the Brownian web of Fontes et al. is explored in the Appendix.

\section{A Skorokhod-type space of non-decreasing flows on the circle}
\label{SKOR}
Since our L\'evy flows are not continuous in time, it will be necessary to
introduce a larger flow space to accommodate them.
Consider now $\phi=(\phi_I:I\sse\R)$, where $\phi_I\in\cD$ and $I$ ranges
over all non-empty finite intervals. 
Recall that we write $I=I_1\oplus I_2$ if $I_1, I_2$ are disjoint intervals with $\sup I_1=\inf I_2$ and $I=I_1\cup I_2$. 
Say that $\phi$ is a {\em weak flow} if,
\begin{equation}\label{WF}
\phi_{I_2}^-\circ\phi_{I_1}^-\le\phi_I^-\le\phi_I^+\le\phi_{I_2}^+\circ\phi_{I_1}^+,
\q I=I_1\oplus I_2.
\end{equation}
Say that $\phi$ is {\em cadlag}\footnote{This definition is more symmetrical in time than
is usual for `cadlag': a more accurate acronym would be {\em laglad}}
if, for all $t\in\R$,
$$
\phi_{(s,t)}\to\id\q\text{as $s\uparrow t$},\q
\phi_{(t,u)}\to\id\q\text{as $u\downarrow t$}.
$$
Write $D^\circ(\R,\cD)$ for the set of cadlag weak flows.
It will be convenient to extend a cadlag weak flow $\phi$ to the empty interval by setting $\phi_\es=\id$.
Given a finite interval $I$ and a sequence of finite intervals $(I_n:n\in\N)$, write $I_n\to I$ if
$$
I=\bigcup_n\bigcap_{m\ge n}I_m=\bigcap_n\bigcup_{m\ge n}I_m.
$$
{\em For any $\phi\in D^\circ(\R,\cD)$, we have}
\begin{equation}\label{PHID}
\phi_{I_n}\to\phi_I\q\text{{\em as} $I_n\to I$}.
\end{equation}

Let $\phi$ be a cadlag weak flow and suppose that $\phi_{\{t\}}=\id$ for all $t\in\R$.
Then, using \eqref{WF}, we have $\phi_{(s,t)}=\phi_{(s,t]}=\phi_{[s,t)}=\phi_{[s,t]}$ for all $s<t$ and, denoting
all these functions by $\phi_{ts}$, the family $(\phi_{ts}:s,t\in\R,s<t)$ is a continuous weak
flow in the sense of the preceding section.

For $\phi,\psi\in D^\circ(\R,\cD)$ and $n\ge1$, define
$$
d_D^{(n)}(\phi, \psi) = \inf_\lambda\left\{
\gamma(\lambda) \vee \sup_{I\sse\R}\|\chi_n(I)\phi_I^\times-\chi_n(\l(I))\psi_{\lambda(I)}^\times\|\right\},
$$
where the infimum is taken over the set of increasing homeomorphisms
$\lambda$ of $\R$, where
$$
\gamma(\lambda) = \sup_{t\in\R}|\l(t)-t|\vee\sup_{s,t\in\R,s<t}\, \left|\log\left(\frac{\lambda(t) - \lambda(s)}{t-s}\right)\right|,
$$
and where $\chi_n$ is the cutoff function\footnote{As in the case of the standard Skorokhod topology, localization
in time sits awkwardly with the stretching of time introduced via the homeomorphisms $\l$. There is no fundamental
obstacle, just some messiness at the edges. We note that, when $I\cup\l(I)\sse[-n,n]$, we have
$$
\|\chi_n(I)\phi_I^\times-\chi_n(\l(I))\psi_{\lambda(I)}^\times\|=d_\cD(\phi_I,\psi_{\l(I)}).
$$
Also, for all intervals $I$, we have $|\chi_n(\l(I))-\chi_n(I)|\le\g(\l)$ and
$$
\|\chi_n(I)\phi_I^\times-\chi_n(\l(I))\psi_{\lambda(I)}^\times\|
\le\chi_n(I)d_\cD(\phi_I,\psi_{\l(I)})+|\chi_n(\l(I))-\chi_n(I)|\|\psi^\times_{\l(I)}\|.
$$}
given by
$$
\chi_n(I)=0\vee(n+1-R)\wedge1,\q R=\sup I\vee(-\inf I).
$$
Then define
\begin{equation}\label{SMET}
d_D(\phi, \psi)=\sum_{n=1}^\infty2^{-n}\{d_D^{(n)}(\phi,\psi)\wedge1\}.
\end{equation}
Then $d_D$ is a metric on $D^\circ(\R,\cD)$ {\em under which $D^\circ(\R,\cD)$ is
complete and separable.} 
Moreover {\em the metrics $d_C$ and $d_D$ generate the same topology on $C^\circ(\R,\cD)$}.
{\em For the metric $d_D$, for all finite intervals $I$ and all $x\in\R$, the {\rm evaluation map}
$$
\phi\mapsto\phi_I^+(x):D^\circ(\R,\cD)\to\R
$$
is Borel measurable.
Moreover the Borel $\s$-algebra on $D^\circ(\R,\cD)$ is generated by
the set of all such evaluation maps with $I=(s,t]$ and $s,t$ and $x$ rational.}
                                                                                                                                                                            
{\em For $e=(s,x)\in\R^2$ and $\phi\in D^\circ(\R,\cD)$, the map
$$
t\mapsto\phi_{(s,t]}^+(x):[s,\infty)\to\R
$$
is cadlag.} Hence we can extend the map $Z^{e,+}$, which we defined on $C^\circ(\R,\cD)$ in the
preceding section, to a measurable map
$Z^{e,+}:D^\circ(\R,\cD)\to D_e=D_x([s,\infty),\R)$ by setting
$$
Z^{e,+}(\phi)=(\phi_{(s,t]}^+(x):t\ge s).
$$
Given a countable set $E\sse\R^2$, we then define a measurable map
$Z^{E,+}:D^\circ(\R,\cD)\to D_E$ by
$$
Z^{E,+}(\phi)^e=Z^{e,+}(\phi),
$$
and set 
$$
D_E^{\circ,+}=\{Z^{E,+}(\phi):\phi\in D^\circ(\R,\cD)\}.
$$
Define similarly $Z^{e,-}$, $Z^{E,-}$ and $D_E^{\circ,-}$, and set $D_E^\circ=D_E^{\circ,+}\cap D_E^{\circ,-}$.

\section{Convergence of L\' evy flows to the coalescing Brownian flow}
\label{MR}
We defined in Section \ref{LFC}, for a given basic map $f\in\cD^*$, on the canonical probability
space $(\O,\cF,\PP)$ of a Poisson random measure, a certain
random flow $(X_I:I\sse\R)$, with $X_I\in\cD$ for all finite intervals $I$.
The properties (\ref{LFM}), (\ref{LWF}) and (\ref{LPC}), noted above, imply that
$X(\o)\in D^\circ(\R,\cD)$ for $\PP$-almost all $\o\in\O$, and that the function
$X:\O\to D^\circ(\R,\cD)$ is measurable for the Borel $\s$-algebra of $D^\circ(\R,\cD)$.
Moreover, we know that
\begin{equation}\label{PME}
Z^{e,+}(X)=Z^{e,-}(X)\q\text{almost surely, for all $e\in\R^2$}.
\end{equation}
We denote by $\mu^f_A$ the law of $X$ on the Borel $\s$-algebra of $D^\circ(\R,\cD)$.

\def\comment{It may be possible to base an alternative proof, using a more direct tightness
argument on $D^\circ(\R,\cD)$, on the following estimates, which have remained unused. First
$$
\E(X_t^4)=3t^2\rho\int_0^1\tilde f(z)^2dz+t\rho\int_0^1\tilde f(x)^4dz\le3t^2+4t\rho^{-2/3},
$$
Then
$$
|X_{t0}(x)-x|\le\max_{k=0,1,\dots,n-1}|X_{t0}(k/n)-k/n|+1/n,
$$
so 
$$
2\|d_\cD(X_{t0},\id)\|_{L^p(\PP)}\le(n\E(X_t^p))^{1/p}+1/n.
$$
Then, optimizing over $n$, we get
$$
\|d_\cD(X_{t0},\id)\|_{L^p(\PP)}\le2\E(X_t^p)^{1/(p+1)},
$$
provided the RHS is no greater than $1$. In particular
$$
\|d_\cD(X_{t0},\id)\|_{L^4(\PP)}\le2(3t^2+4t\rho^{-2/3})^{-1/5}.
$$}
\begin{theorem}\label{MAIN}
We have $\mu^f_A\to\mu_A$ weakly on $D^\circ(\R,\cD)$,
uniformly in $f\in\cD^*$, as $\rho(f)\to\infty$ and $\l(f)\to0$.
\end{theorem}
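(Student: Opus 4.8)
The plan is to deduce the full flow convergence from the finite-dimensional convergence of Proposition \ref{LAF} by combining a tightness argument on $D^\circ(\R,\cD)$ with an identification of limit points. First I would establish tightness of the family $(\mu^f_A:f\in\cD^*)$ on $D^\circ(\R,\cD)$ in the regime $\rho\to\infty$, $\l\to0$. Since $D^\circ(\R,\cD)$ is a complete separable metric space of Skorokhod type, the natural route is a modulus-of-continuity estimate adapted to the metric $d_D$: one needs to control, uniformly in $f$, both the size of the spatial displacement $d_\cD(X_I,\id)\le\tfrac12\|X_I-\id\|$ over small time intervals $I$ and the occurrence of `big jumps', so that time can be reparametrized by a homeomorphism $\l$ with $\g(\l)$ small. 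Here I would exploit the martingale structure from the proof of Proposition \ref{LBM} — that $(X^e_t)$ and $((X^e_t)^2-t)$ are martingales and that jumps are bounded by $\|\tilde f\|\le2\rho^{-1/3}$ — together with the observation that $\|X_I-\id\|$ can be estimated from the displacements $X_I(k/n)-k/n$ at finitely many points plus $1/n$, as recorded in the commented-out display. This reduces the flow-level modulus to uniform moment bounds on the $n$-point motions, which are available.

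Next, given tightness, let $\mu$ be any subsequential weak limit of $\mu^f_A$ as $\rho\to\infty$, $\l\to0$. The goal is to show $\mu=\mu_A$, for which by Theorem \ref{UBPR} it suffices to check that $\mu\circ(Z^{F,+})^{-1}=\mu_F$ for every finite $F\sse\R^2$ and that the limit is supported on $C^\circ(\R,\cD)$. The evaluation maps $\phi\mapsto\phi_{(s,t]}^+(x)$ are Borel measurable on $D^\circ(\R,\cD)$ and, at continuity points, behave well under the Skorokhod topology, so $Z^{F,+}$ is almost surely continuous for $\mu$ on a suitable set; combined with Proposition \ref{LAF}, which gives $\mu^f_E\to\mu_E$ weakly on $D_E$, this forces the pushforward of $\mu$ under $Z^{F,+}$ to equal $\mu_F$. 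To upgrade from the Skorokhod space $D^\circ(\R,\cD)$ to the continuous flow space, I would argue that the limiting $n$-point motions are continuous (they are coalescing Brownian motions), so the jumps of the flow vanish in the limit; more precisely, the $d_D^{(n)}$-modulus estimate from the tightness step shows the reparametrizing homeomorphisms converge to the identity, so $\mu$ charges only $C^\circ(\R,\cD)$, and there $d_D$ and $d_C$ generate the same topology.

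Finally, uniformity in $f$ is obtained in the usual way: if convergence failed uniformly, there would be a sequence $f_k$ with $\rho(f_k)\to\infty$, $\l(f_k)\to0$ along which $\mu^{f_k}_A$ stays a fixed positive distance from $\mu_A$ in the Lévy--Prohorov metric; tightness extracts a further weakly convergent subsequence, whose limit must be $\mu_A$ by the identification above, a contradiction.

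I expect the main obstacle to be the tightness estimate on $D^\circ(\R,\cD)$, specifically producing a modulus of continuity that is uniform in $f\in\cD^*$ and simultaneously controls spatial oscillation and the Skorokhod time-reparametrization in the metric $d_D$ — the cutoff functions $\chi_n$ and the interplay between the stretching penalty $\g(\l)$ and the localization in $n$ make this delicate near the `edges', as the footnote to the definition of $d_D$ already hints. The passage from finitely many starting points to the whole flow, by contrast, is essentially forced once tightness and the bijections $\Phi^{E,\pm}$ of Theorem \ref{UBPR} are in hand.
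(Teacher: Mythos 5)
Your plan is a correct alternative strategy in outline, but it is genuinely different from—and substantially harder than—the route the paper actually takes, so it is worth explaining the contrast. The paper's proof does \emph{not} establish tightness of $(\mu^f_A)$ on $D^\circ(\R,\cD)$ at all. Instead it observes that the flow is a deterministic function of the countable family of point motions indexed by $E=\Q^2$: the map $Z^{E,+}:D^\circ(\R,\cD)\to D^{\circ,+}_E$ is a bijection with measurable inverse $\Phi^{E,+}$, which agrees with $\Phi^{E,-}$ on $D^\circ_E$, and—crucially, an appendix proposition—the common restriction $\Phi^E$ is \emph{continuous at every $z\in C^\circ_E$}. Since $\mu_E^f(D^\circ_E)=1$ for all $f$ and $\mu_E(C^\circ_E)=1$, one may invoke the extended continuous mapping theorem: $\mu^f_E\to\mu_E$ weakly on $D_E$ (Proposition \ref{LAF}) immediately pushes forward under $\Phi^E$ to $\mu_A^f=\mu^f_E\circ(\Phi^E)^{-1}\to\mu_E\circ(\Phi^E)^{-1}=\mu_A$. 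No tightness estimate on the flow space, no subsequence extraction, no reparametrizing-homeomorphism modulus is ever needed: the whole burden is shifted to the a.s.-continuity of $\Phi^E$ at points of $C^\circ_E$, which the paper proves separately using the $\ve$-$\delta$ criterion for $d_\cD$ and the density of $\Q^2$.

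Your tightness-plus-identification route could in principle be made to work, but it faces precisely the difficulty you identified, and the step you treat as routine—``the limiting $n$-point motions are continuous, so the jumps of the flow vanish in the limit''—is not automatic: continuity of countably many point trajectories does not by itself preclude oscillation of $\phi_I\in\cD$ uniformly over the unit circle, and producing the required modulus-of-continuity estimate uniform in $f$ for the metric $d_D$ (with the $\gamma(\l)$ stretching penalty and $\chi_n$ localization) is exactly what the authors appear to have tried and abandoned (witness the commented-out $L^4$ estimate in the source). There is also a circularity risk in your identification step: to deduce $\mu\circ(Z^{F,+})^{-1}=\mu_F$ from the Skorokhod-level limit you need $Z^{F,+}$ to be $\mu$-a.s. continuous, which more or less presupposes $\mu(C^\circ(\R,\cD))=1$—the very thing you are trying to establish. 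The paper's approach of going through the inverse map $\Phi^E$ and its continuity on $C^\circ_E$ neatly avoids both issues, because the relevant a.s.-continuity is with respect to the \emph{known} limit $\mu_E$ rather than the unknown subsequential limit $\mu$.
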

\begin{proof}
Take $E=\Q^2$.
Consider $\phi,\psi\in D^\circ(\R,\cD)$ and suppose that $Z^{E,+}(\phi)=Z^{E,+}(\psi)$. Thus $\phi_{(s,t]}^+(x)=\psi_{(s,t]}^+(x)$ for all
$s,x\in\Q$ and all $t>s$. This extends to all $x\in\R$ because $\phi_{(s,t]}^+$ and $\psi_{(s,t]}^+$ are both right continuous. Then,
using (\ref{PHID}), we obtain $\phi_I(x)=\psi_I(x)$ for all finite intervals $I$.
Hence the map $Z^{E,+}:D^\circ(\R,\cD)\to D^{\circ,+}_E$ is a bijection.
Write $\Phi^{E,+}$ for the inverse bijection $D^{\circ,+}_E\to D^\circ(\R,\cD)$. 
Similarly, write $\Phi^{E,-}$ for the inverse of the bijection $Z^{E,-}:D^\circ(\R,\cD)\to D^{\circ,-}_E$.
Then {\em $D^{\circ,+}_E$ and $\Phi^{E,+}$ are
measurable, and we have $\Phi^{E,+}=\Phi^{E,-}$ on $D^\circ_E$}.
Write $\Phi^E$ for the common restriction of these functions to $D_E^\circ$.
{\em Then $\Phi^E$ is continuous at $z$ for all $z\in C_E^\circ$.}
We see from (\ref{PME}) that $\mu_E^f(D^\circ_E)=1$ for all $f\in\cD^*$, 
and we recall from the proof of Theorem \ref{UBPR} that $\mu_E(C^\circ_E)=1$.

Consider the limit where $\rho(f)\to\infty$ and $\l(f)\to0$.
By Proposition \ref{LAF}, we have $\mu^f_E\to\mu_E$ weakly on $D_E$.
Then, by a standard weak convergence argument, see for example \cite{B}, we obtain 
$\mu^f_A=\mu^f_E\circ(\Phi^E)^{-1}\to\mu_E\circ(\Phi^E)^{-1}=\mu_A$ weakly on $D^\circ(\R,\cD)$.
\end{proof}

\section{Time reversal}\label{TR}
For $f^+\in\cR$ and $f^-\in\cL$, we define a {\em left-continuous inverse}
$(f^+)^{-1}\in\cL$ and a {\em right-continuous inverse} $(f^-)^{-1}\in\cR$
by
$$
(f^+)^{-1}(y)=\inf\{x\in\R:f^+(x)>y\},\q (f^-)^{-1}(y)=\sup\{x\in\R:f^-(x)<y\}.
$$
The map $f^+\mapsto(f^+)^{-1}:\cR\to\cL$ is a bijection, with
$((f^+)^{-1})^{-1}=f^+$ and
$$
(f_1^+\circ f_2^+)^{-1}=(f_2^+)^{-1}\circ(f_1^+)^{-1},\q f_1,f_2\in\cR.
$$
We have $f^+\circ (f^+)^{-1}=\id$\, if and only if $f^+$ is a homeomorphism.
Define for $f=\{f^-,f^+\}\in\cD$ the {\em inverse}
$f^{-1}=\{(f^+)^{-1},(f^-)^{-1}\}\in\cD$. Note that $(f^{-1})^\times=-f^\times$,
so the map $f\mapsto f^{-1}:\cD\to\cD$ is an isometry.

Define the {\em time-reversal map} $\phi\mapsto\hat\phi$ on $D^\circ(\R,\cD)$ by
$$
\hat\phi_I=\phi_{-I}^{-1},
$$
where $-I=\{-x:x\in I\}$.
It is straightforward to check that this is a well-defined isometry of
$D^\circ(\R,\cD)$, which restricts to an isometry of $C^\circ(\R,\cD)$.
Write $\hat\mu_A^f$ for the law of $\widehat{X^f}$, where
$X^f\sim\mu_A^f$ is the L\'evy flow
with basic map $f\in\cD^*$.  
\begin{proposition}
We have $\hat\mu_A^f=\mu_A^{f^{-1}}$ for all $f\in\cD^*$.
\end{proposition}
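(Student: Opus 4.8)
The plan is to show that the time-reversed L\'evy flow $\widehat{X^f}$ is itself a L\'evy flow, built from the same Poisson random measure data but with basic map $f^{-1}$, so that its law is $\mu_A^{f^{-1}}$ by construction. The key observation is that the elementary building blocks transform correctly: if $F_t = (\t_z f)$ is the map attached at an atom $(t,z)$, then its inverse is $F_t^{-1} = (\t_z f)^{-1} = \t_z(f^{-1})$, since conjugation by the rotation $\t_z$ commutes with inversion in $\cD$ (both amount, in the rotated coordinates $f^\times$, to a translation of the argument and a sign change, and these commute). Thus reversing time and inverting converts an atom of $\o$ at $(t,z)$ into an atom, at $(-t,z)$, of the elementary map $\t_z(f^{-1})$.

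First I would treat the case $\b(f)=0$. Recall $X^\pm_I = F^\pm_{T_n}\circ\dots\circ F^\pm_{T_1}$ where $T_1<\dots<T_n$ are the atom times in $I$. Using $(g_1\circ\dots\circ g_n)^{-1} = g_n^{-1}\circ\dots\circ g_1^{-1}$ together with the interchange of left/right-continuous inverses (as recorded in Section \ref{TR}, $(f^+)^{-1}\in\cL$ and $(f^-)^{-1}\in\cR$), one gets
$$
(X^+_I)^{-1} = (F^+_{T_1})^{-1}\circ\dots\circ(F^+_{T_n})^{-1},
$$
and the atom times $-T_n<\dots<-T_1$ in $-I$ are listed in increasing order, with the corresponding elementary maps being $\t_{z_i}(f^{-1})$. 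Hence $\widehat{X^f}_I = X^f_{-I}{}^{-1}$ coincides with the L\'evy flow with basic map $f^{-1}$ driven by the pushforward of $\o$ under the measure-preserving map $(t,z)\mapsto(-t,z)$ of $\R\times(\R/\Z)$. Since $\nu(dt,dz)=\rho\,dt\,dz$ is invariant under this reflection, that pushforward is again a Poisson random measure of the same intensity $\rho$; and since $\rho(f^{-1})=\rho(f)$ and $\b(f^{-1})=-\b(f)=0$ (both because $\tilde f^{-1}$, in the rotated picture, is just $-\tilde f$ reparametrised, giving the same $\int\tilde f^2$ and the negative of $\int\tilde f$), the resulting flow has law $\mu_A^{f^{-1}}$. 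Here one should be slightly careful that the exceptional $\PP$-null set on which $X\notin D^\circ(\R,\cD)$ is respected by the reflection, which it is, being a null set for an invariant measure, and that $\widehat{\,\cdot\,}$ is a Borel isometry of $D^\circ(\R,\cD)$, as already noted in the text.

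For the case $\b=\b(f)\neq0$ I would unwind the drift compensation. By construction $X^\pm_I(x)=X^{\b,\pm}_I(x+\b s)-\b t$ with $s=\inf I$, $t=\sup I$, where $X^{\b}$ is the $\b=0$ flow built from the shifted measure $\o^\b(dt,dz)=\o(dt,d(z+\b t))$. Inverting this affine-conjugated map and replacing $I$ by $-I$ gives an explicit formula for $\widehat{X^f}_I$ in terms of $(X^{\b}_{-I})^{-1}$, composed with the translations $x\mapsto x-\b t$ and $x\mapsto x+\b s$. One then checks this is exactly the recipe defining the L\'evy flow with basic map $f^{-1}$: its drift constant is $\b(f^{-1})=-\b$, its $\b$-shifted driving measure is the reflection of $\o^\b$, which is again Poisson of intensity $\rho$, and the affine pre/post-compositions match up because $\inf(-I)=-t$, $\sup(-I)=-s$. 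The bookkeeping with the signs of $\b$, the swap of $\inf$ and $\sup$ under $I\mapsto -I$, and the interaction of the inner shift $z\mapsto z+\b t$ with the time reflection is where essentially all the work lies; the $\b=0$ case is conceptually the whole proof and the drift case is a (somewhat fiddly) verification. I expect the main obstacle to be precisely this algebra of the drift compensation, together with making the "same Poisson data" identification rigorous at the level of laws rather than sample paths — most cleanly done by exhibiting an explicit $\PP$-preserving transformation $\o\mapsto\o'$ of $\O$ under which $X^{f^{-1}}(\o')=\widehat{X^f}(\o)$ almost surely, and then pushing forward measures.
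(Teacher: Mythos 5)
Your proposal is correct and follows essentially the same route as the paper: show that $(\t_z f)^{-1}=\t_z(f^{-1})$, realise $\widehat{X^f}$ pathwise as the L\'evy flow with basic map $f^{-1}$ driven by the time-reflected Poisson measure $\hat\o(I\times B)=\o((-I)\times B)$, note that this reflection is $\PP$-preserving (and that it intertwines the $\b$-shifts, $(\hat\o)^\b=\widehat{\o^{-\b}}$), and conclude at the level of laws. The only place you are a little loose is the justification of $\rho(f^{-1})=\rho(f)$ and $\b(f^{-1})=-\b(f)$: the definitions of $\rho$ and $\b$ are integrals in the \emph{unrotated} $x$-coordinate, so the remark that $(f^{-1})^\times=-f^\times$ does not by itself give the identity, since passing to the rotated variable changes the measure of integration; the paper proves these two identities cleanly by a Fubini argument on the region between the graph of $f$ and the diagonal, which is worth supplying.
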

\begin{proof}
Fix $f\in\cD^*$. Set $g=f^{-1}$ and
$$
\D=\{(x,y)\in\R^2:y<f(x)\}=\{(x,y)\in\R^2:x>g(y)\},\q \D_0=\{(x,y)\in\R^2:y<x\}.
$$
Then, by Fubini's theorem,
$$
\int_0^1\tilde f(x)dx=\int_0^1\int_\R(1_\D-1_{\D_0})(x,y)dxdy=-\int_0^1\tilde g(y)dy
$$
and
$$
\int_0^1\tilde f(x)^2dx=\int_0^1\int_\R2(y-x)(1_\D-1_{\D_0})(x,y)dxdy=\int_0^1\tilde g(y)^2dy.
$$
So $\rho(g)=\rho(f)$ and $\b(g)=-\b(f)$.
Given $\o\in\O$, define $\hat\o\in\O$ by $\hat\o(I\times B)=\o((-I)\times B)$,
for intervals $I\sse\R$ and Borel sets $B\sse(\R/\Z)$. The map $\o\mapsto\hat\o$
is a measure preserving transformation of $(\O,\cF,\PP)$. We have also
$(\hat\o)^\b=\widehat{\o^{-\b}}$. Now, suppressing the $\pm$ notation, for any finite interval
$I$,
$$
X^f_{-I}(\hat\o,x)=\hat F_{\hat T_n}\circ\dots\circ\hat F_{\hat T_n}(x-\b t)+\b s,
$$
where $\hat T_1<\dots<\hat T_n$ are the times of the atoms of $(\hat\o)^\b$
in $-I$, and where
$$
\hat F_t = \begin{cases}
          \t_z f, & \text{if $(\hat\o)^\b$ has an atom at $(t,z)$}, \\
          \id,          & \text{otherwise}.
         \end{cases}
$$
Set
$$
G_t(y) = \begin{cases}
          \t_zg, & \text{if $\o^{-\b}$ has an atom at $(t,z)$}, \\
          \id,          & \text{otherwise}.
         \end{cases}
$$
Note that $\hat T_k=-T_{n-k+1}$, where $T_1<\dots<T_n$ are
the times of the atoms of $\o^{-\b}$ in $I$,
and $(\t_zf)^{-1}=\t_zg$, so $(\hat F_{\hat T_k})^{-1}=G_{T_{n-k+1}}$.
Then
$$
\widehat{X^f_I}(\hat\o,y)=(X^f_{-I})^{-1}(\hat\o,y)
=G_{T_n}\circ\dots\circ G_{T_1}(y-\b s)+\b t=X^g_I(\o,y).
$$
Hence $X^g$ and $\widehat{X^f}$ have the same distribution.
\end{proof}

We get as a corollary the reversibility of the limit, which is already known
in various guises. See, for example, \cite{A79},\cite{FN} and \cite{TW}.

\def\comment{
I have been puzzling over the question whether $\l(f)\to0$ implies $\l(f^{-1})\to0$ in general.
This would tie things up neatly. As it stands, using reversibility, one of these
conditions will suffice, with $\rho(f)\to\infty$, to give $\mu^f_A\to\mu_A$.
I am pretty much convinced that $\l(f)\not=\l(f^{-1})$ in general but have not actually done
an example to establish this.}

\begin{corollary}
The law $\mu_A$ of the coalescing Brownian flow is invariant under time-reversal.
\end{corollary}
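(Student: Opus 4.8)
My plan is to combine the preceding Proposition, which gives $\hat\mu_A^f=\mu_A^{f^{-1}}$, with the convergence Theorem \ref{MAIN}. Write $R\colon\phi\mapsto\hat\phi$ for the time-reversal map; recall from above that $R$ is an isometry of $D^\circ(\R,\cD)$, hence continuous, and restricts to an isometry of $C^\circ(\R,\cD)$, on which $\mu_A$ is carried. What I need to show is $\mu_A\circ R^{-1}=\mu_A$. The key is to produce a single sequence $(f_n)$ in $\cD^*$ along which both $f_n$ and its inverse $f_n^{-1}$ satisfy the hypotheses $\rho\to\infty$, $\l\to0$ of Theorem \ref{MAIN}. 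Since the computation in the proof of the Proposition shows $\rho(f_n^{-1})=\rho(f_n)$, the only thing to arrange beyond $\rho(f_n)\to\infty$ and $\l(f_n)\to0$ is that $\l(f_n^{-1})\to0$ as well. Granting this, Theorem \ref{MAIN} gives $\mu_A^{f_n}\to\mu_A$ and $\mu_A^{f_n^{-1}}\to\mu_A$ weakly on $D^\circ(\R,\cD)$; pushing the first forward by the continuous map $R$ and using $\mu_A^{f_n}\circ R^{-1}=\hat\mu_A^{f_n}=\mu_A^{f_n^{-1}}$ from the Proposition, I obtain that $\mu_A^{f_n}\circ R^{-1}$ converges both to $\mu_A\circ R^{-1}$ and to $\mu_A$; since $D^\circ(\R,\cD)$ is a separable metric space, weak limits are unique, so $\mu_A\circ R^{-1}=\mu_A$.

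The substantive step is building that sequence, and here I must take care, because — as already noted — $\l(f)\to0$ does not by itself force $\l(f^{-1})\to0$; I will therefore choose the approximating maps so well localized that their inverses are localized in the same place. Concretely, I would take each $f_n$ to be a degree-$1$ homeomorphism of $\R$ equal to the identity off $(0,1/n)+\Z$: fix once and for all a nonzero $C^1$ function $\eta\ge0$ with $\supp\eta\subseteq[0,1]$, set $f_n(x)=x+c_n\eta(nx)$ for $x\in[0,1/n]$ with $c_n$ of order $1/n$ chosen small enough to keep $f_n$ strictly increasing, and extend by $f_n(x+k)=f_n(x)+k$ and by $f_n=\id$ on $[1/n,1]$. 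Then $\tilde f_n$ is supported in $[0,1/n]$, so $\int_0^1\tilde f_n(x+a)\tilde f_n(x)\,dx=0$ for every $a\in[1/n,1-1/n]$, which gives $\l(f_n)\le 1/n$; and $\int_0^1\tilde f_n^2$ is of order $n^{-3}$, so $\rho(f_n)$ is of order $n^3$ and tends to infinity. Crucially, since $f_n$ is a homeomorphism equal to the identity off $(0,1/n)+\Z$, so is $f_n^{-1}$; hence $\tilde{f_n^{-1}}$ is also supported in $[0,1/n]$, and the same estimate yields $\l(f_n^{-1})\le 1/n\to0$.

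I expect this construction to be the one real obstacle: one cannot feed an arbitrary L\'evy-flow approximation into the reversal argument, because spatial localization of $f$ is not in general inherited by $f^{-1}$; working with perturbations whose support shrinks to a point circumvents this, as that support is manifestly shared by the inverse. The remaining ingredients — continuity of $R$, the identity $\rho(f^{-1})=\rho(f)$ from the previous proof, uniqueness of weak limits on the separable space $D^\circ(\R,\cD)$, and the weak convergence delivered by Theorem \ref{MAIN} — are all already in hand.
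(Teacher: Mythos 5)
Your argument is correct and follows essentially the same route as the paper: fix a family of well-localized maps $f$ for which $\rho(f)=\rho(f^{-1})\to\infty$ and both $\lambda(f),\lambda(f^{-1})\to0$, apply Theorem \ref{MAIN} to $f$ and $f^{-1}$, and combine with $\hat\mu_A^f=\mu_A^{f^{-1}}$ and continuity of the time-reversal isometry to force $\mu_A=\hat\mu_A$ by uniqueness of weak limits. The paper makes the same observation about the non-automatic behaviour of $\lambda(f^{-1})$ and handles it identically, choosing a concrete localized $f$ (a piecewise linear map with $\tilde f^+(x)=(r-x)\vee 0$) rather than your smooth bump perturbation, but the mechanism is the same: take $\tilde f$ supported in an interval of shrinking length so that $\tilde{f^{-1}}$ shares that support.
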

\begin{proof}
Write $\hat\mu_A$ for the image measure of $\mu_A$ under time reversal.
Fix $r>0$ and define $f\in\cD$ by
$$
f^+(n+x)=n+(x\vee r),\q n\in\Z,\q x\in[0,1).
$$
Then $\tilde f^+(x)=(r-x)\vee 0$ for $x\in[0,1)$, so $\rho(f)=3/r^3$ and
$$
\int_0^1\tilde f(x)\tilde f(x+a)dx=0,\q r\le a\le1-r,
$$
so $\l(f)\le r$. Moreover $\rho(f^{-1})=\rho(f)$ and $\l(f^{-1})=\l(f)$\note. 
Consider the limit $r\to0$. By Theorem \ref{MAIN},
we know that $\mu^f_A\to\mu_A$ and $\mu^{f^{-1}}_A\to\mu_A$, weakly on $D^\circ(\R,\cD)$.
Since the time-reversal map $\phi\mapsto\hat\phi$ is an isometry, it
follows, using the preceding proposition, that $\mu^{f^{-1}}_A=\hat\mu^f_A\to\hat\mu_A$,
weakly on $D^\circ(\R,\cD)$. Hence $\mu_A=\hat\mu_A$.
\end{proof}

\section{Scaling limit of the aggregation model}\label{SLAM}
\label{model}

We now return to the aggregation model described in the Introduction and deduce from Theorem \ref{LAF}
that, as the particle diameter $\d\to0$ and time is suitably speeded up, then the evolution of 
harmonic measure on the fingers of the cluster converges to the coalescing Brownian flow.
In fact, this scaling limit applies more generally and depends on the shape of the attached particles 
through only a single scalar parameter, which determines the speed of the associated flow.
We therefore begin this section by giving a more general formulation of the aggregation model than that
in the Introduction.

Let $K_0$ denote the closed unit ball in $\C$ with centre at $0$. Set $D_0=(\C\cup\{\infty\})\sm K_0$.
Let $P$ be a closed, connected, simply connected subset of $\overline{D_0}$ of diameter $\d\in(0,1]$ 
such that $P\cap K_0=\{1\}$. Set $D=(\C\cup\{\infty\})\sm(K_0\cup P)$. 
The set $P$ models an incoming particle, which is attached to $K_0$ at $1$. 
Write $F$ for the unique conformal isomorphism $F:D_0\to D$ such that $F(\infty)=\infty$
and $F'(\infty)>0$. Let $R_1,R_2,\dots$ be a sequence of random rotations of the plane about the origin
and set $F_n=R_n\circ F\circ R_n^{-1}$. Now construct the aggregation model from the sequence $F_1,F_2,\dots$
as in the Introduction.

Write $G$ for the inverse isomorphism $D\to D_0$.
There exists a unique $g\in\cD$ such that $g$ restricts to a continuous map from the interval $(0,1)$ to itself, and such that
$$
G(e^{2\pi ix})=e^{2\pi ig(x)},\q x\in(0,1).
$$
Define $\rho(P)=\rho(g)>0$ and $\b(P)=\b(g)\in\R$ as at (\ref{lambdadef}). 
Note that, if $P$ is symmetric in the real axis, then $g$ is an odd function, so $\b(P)=0$.

Set $\G_n=G_n\circ\dots\circ G_1$, where $G_n=F_n^{-1}$, so that $\G_n:D_n\to D_0$.
The restriction of $\G_n$ to the boundary $\partial K_n=\partial D_n$, gives a natural parametrization
of the boundary of the $n$th cluster by the unit circle. It has the property that, for $\xi,\eta\in\partial K_n$,
the normalized harmonic measure $h$ (from $\infty$) of the positively oriented boundary segment from $\xi$ to $\eta$
is given by $\G_n(\eta)/\G_n(\xi)=e^{2\pi ih}$. 
For $m,n\in\N$ with $m<n$, set 
$$
\G_{nm}=G_n\circ\dots\circ G_{m+1}|_{\partial K_0}. 
$$
Set $\G_{nn}=\id$. The circle maps $\G_{nm}$ belong to $\cD_0$ (see Footnote \ref{CDZ})
and have the flow property
$$
\G_{nm}\circ \G_{mk}=\G_{nk},\q k\le m\le n.
$$
The map $\G_{nm}$ expresses how the harmonic measure on $\partial K_m$ is transformed by the arrival of new particles
up to time $n$. The following result identifies the asymptotic behaviour of this family of maps, for symmetric particles $P$,
in the limit as the
particle diameter $\d$ becomes small.\footnote{If $P$ is not symmetric, we obtain the same result once the definition of
$\G_I$ is modified to 
$$
\G_I(e^{2\pi ix})=e^{-2\pi i\b t}\G_{nm}(e^{2\pi i(x+\b s)}),
$$
where $s=\inf I$ and $t=\sup I$.
The cluster exhibits in this case a first order spinning at speed $\b=\b(P)$, with second order fluctuations given by the
coalescing Brownian flow.}
We embed in continuous time by defining, for an interval $I\sse(0,\infty)$,
$\G_I=\G_{nm}$ where $m$ and $n$ are the smallest and largest integers, respectively, in $\rho(P)I$.
Then $(\G_I:I\sse(0,\infty))$ is a random variable in $D^\circ((0,\infty),\cD_0)$. We denote its law by $\mu^P_A$.
Write here $\mu_A$ for the law of the coalescing Brownian flow on $C^\circ((0,\infty),\cD_0)$.

\begin{theorem}
We have $\mu_A^P\to\mu_A$ weakly on $D^\circ((0,\infty),\cD_0)$ as $\d(P)\to0$.
\end{theorem}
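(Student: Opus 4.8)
The plan is to recognise the map family $(\G_I)$ as a composition of independent, identically distributed random elements of $\cD_0$, to identify it --- up to a harmless Poissonisation --- with a L\'evy flow of the type studied in Section~\ref{LFC}, and then to invoke Theorem~\ref{MAIN}. First I would make the algebraic identification explicit: since $G_n=R_n\circ G\circ R_n^{-1}$ and $G$ acts on the boundary circle through $g=g_P\in\cD$, writing $R_n$ for rotation by an angle $\phi_n\in\R/\Z$ one checks directly that $G_n$ acts on the circle through the lifting $\t_{\phi_n}g$; as the rotations are uniform and independent, the $\phi_n$ are independent and uniform on $\R/\Z$, so $\G_{nm}$ is the circle map with lifting $(\t_{\phi_n}g)\circ\dots\circ(\t_{\phi_{m+1}}g)$, a composition of $n-m$ i.i.d.\ rotated copies of the fixed map $g$ --- precisely the construction of Section~\ref{LFC}, but with the atom times of $\o$ replaced by the integers. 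Let $X^{g_P}$ denote the L\'evy flow with basic map $g$, restricted to intervals in $(0,\infty)$ and read through $\cD_0$ as in Footnote~\ref{CDZ}, with law $\mu_A^{g_P}$: it is built from the \emph{same} rotated maps, applied now at the points of a Poisson process of rate $\rho(g)=\rho(P)$, the last equality being the normalisation \eqref{lambdadef}. I would treat symmetric $P$ first, so that $\b(P)=0$; the asymmetric case follows by the drift compensation indicated in the footnote to the theorem.

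Second, I would verify the hypotheses of Theorem~\ref{MAIN} in the regime $\d(P)\to0$, namely $\rho(P)\to\infty$ and $\l(P)\to0$. As $\d(P)\to0$ the particle shrinks to the attachment point, $G$ converges to the identity, and the boundary displacement $\tilde g_P$ tends to $0$ uniformly, with $\|\tilde g_P\|_\infty=O(\d)$ and $\tilde g_P$ concentrated near $0$ up to a dipole tail of size $O(\d^2)$; for the slit and the lune this is visible from \eqref{DTHN} and the capacity estimate \eqref{LCE}, and the general case follows from a standard distortion argument. Hence $\int_0^1\tilde g_P(x)^2\,dx\to0$, so $\rho(P)\to\infty$; and $\rho(P)\int_0^1|\tilde g_P(x+a)\tilde g_P(x)|\,dx=o(1)$ for $a$ bounded away from $0$ and $1$ by an amount tending to $0$, so $\l(P)\to0$. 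Theorem~\ref{MAIN}, in its $(0,\infty)$, $\cD_0$ form, then yields $\mu_A^{g_P}\to\mu_A$ weakly on $D^\circ((0,\infty),\cD_0)$.

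Third, I would remove the discrepancy between applying the maps at the integers and at Poisson times. I would couple $\G$ and $X^{g_P}$ by using the same sequence $\phi_1,\phi_2,\dots$ of uniform angles together with an independent rate-$\rho(P)$ Poisson process, with counting process $N$: then $\G_I$ composes the maps indexed by the integers in $\rho(P)I$, while $X^{g_P}_I$ composes those indexed by the integers in $(N(\inf I),N(\sup I)]$. Since $\sup_{t\le n}|N(t)-\rho(P)t|=O_\PP(\rho(P)^{1/2})$, these index ranges differ, at each end of $I$, by a block of $k=O_\PP(\rho(P)^{1/2})$ consecutive terms; representing such a block as a single map $u\in\cD$ and using $d_\cD(h,u\circ h)\le\|u-\id\|$ and $d_\cD(h,h\circ u)\le\|u-\id\|$ from Section~\ref{CBF}, and taking $\l=\id$ in the metric $d_D$ (so that $\g(\l)=0$ and only the $\chi_n$-weighted sup term survives), the comparison reduces to showing that $\|u-\id\|\to0$ in probability, uniformly over the sliding windows contributing within a fixed compact time range. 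But $u$ has the law of $X^{g_P}$ run over a time interval of length $k/\rho(P)=O(\rho(P)^{-1/2})\to0$, and discretising the circle into $O(\rho(P))$ arcs and combining the martingale identity $\E|X^{g_P}_{(a,b]}(x)-x|^2=b-a$ with the uniform jump bound $\|\tilde g_P\|\to0$ gives an $L^p$ estimate on $\|u-\id\|$ tending to $0$; a union bound over the polynomially many windows then controls the supremum. Hence $d_D(\G,X^{g_P})\to0$ in probability under the coupling, so $\mu_A^P$ has the same weak limit as $\mu_A^{g_P}$, namely $\mu_A$.

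The hard part will be the third step: turning the intuition that ``Poissonisation is harmless'' into the quantitative statement that a block of $O(\rho^{1/2})$ rotated copies of $g_P$ displaces every point by $o(1)$, uniformly in the basic map and over sliding time windows --- essentially a modulus-of-continuity estimate for the flow at vanishing time scales. It is routine, of the same flavour as the tightness bounds already used in Propositions~\ref{LBM} and~\ref{LAF}, but it carries most of the work; assembling the conformal estimates of the second step for a general particle shape, rather than the two explicit examples, is the other main ingredient.
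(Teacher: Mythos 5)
Your proposal follows essentially the same route as the paper's: identify $\G_{nm}$ with a composition of i.i.d.\ rotated copies of $g_P$, verify via conformal/capacity estimates (the paper's Proposition~\ref{GDL}) that $\rho(P)\to\infty$ and $\l(P)\to0$ as $\d\to0$, apply Theorem~\ref{MAIN} to the Poissonized flow, and remove the Poissonization by a coupling and modulus-of-continuity argument. The paper compresses the last step into the phrase ``by a standard argument'' and delegates the estimates to Proposition~\ref{GDL}; your sketch fills in precisely those two pieces in the manner the paper intends.
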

\begin{proof}
By a standard argument, it will suffice to prove that $\tilde\mu_A^P\to\mu_A$ weakly on $D^\circ((0,\infty),\cD_0)$ as $\d(P)\to0$,
where we obtain $\tilde\mu^P_A$ by a Poissonized embedding, taking $\tilde\G_{(s,t]}=\G_{N_tN_s}$, with $N$ a Poisson process
of rate $\rho(P)$. In the light of Proposition \ref{GDL} below, the theorem is then a straightforward translation
of Theorem \ref{MAIN}.
\end{proof}

\begin{corollary}
Let $x_1,\dots,x_n$ be a positively oriented set of points in $\R/\Z$ and set $x_0=x_n$.
Set $K_t=K_{\lfloor\rho(P)t\rfloor}$.
For $k=1,\dots,n$, write $H_t^k$ for the harmonic measure in $K_t$ of the boundary segment of all
fingers in $K_t$ attached between $x_{k-1}$ and $x_k$.
Let $(B_t^1,\dots,B_t^n)_{t\ge0}$ be a family
of coalescing Brownian motions in $\R/\Z$ starting from $(x_1,\dots,x_n)$. 
Then, in the limit $\d(P)\to0$, $(H_t^1,\dots,H_t^n)_{t\ge0}$ converges weakly in $D([0,\infty),[0,1]^n)$
to $(B_t^1-B_t^0,\dots,B^n_t-B_t^{n-1})_{t\ge0}$.
\end{corollary}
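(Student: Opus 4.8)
The plan is to read this off from the preceding theorem by projecting the circle-valued flow $(\G_I)$ onto the finitely many marginals started at time $0$ from $x_0,x_1,\dots,x_n$, taking the consecutive gaps, and then invoking the characterization of the coalescing Brownian flow in Theorem~\ref{UBPR}.

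\emph{First}, I would fix liftings with $x_0<x_1<\dots<x_n<x_0+1$ and, writing $\G^+_{(0,t]}$ for the right-continuous lifting of the circle map $\G_{(0,t]}\in\cD_0$, identify
$$
H^k_t=\G^+_{(0,t]}(x_k)-\G^+_{(0,t]}(x_{k-1}).
$$
This is where the geometry of the model enters. By the harmonic-measure interpretation of the maps $\G_{nm}$ recorded in Section~\ref{model} --- they carry the harmonic measure from $\infty$ on $\partial K_n$ to normalized Lebesgue measure on the unit circle, with $\G_n(\eta)/\G_n(\xi)=e^{2\pi ih}$ for the harmonic measure $h$ of the positively oriented boundary arc of $\partial K_n$ from $\xi$ to $\eta$ --- the positively oriented arc of $\partial K_t$ lying over the original arc $[x_{k-1},x_k]$ has harmonic measure exactly $\G^+_{(0,t]}(x_k)-\G^+_{(0,t]}(x_{k-1})$, and that arc is precisely the union of the fingers attached between $x_{k-1}$ and $x_k$ together with the part of $[x_{k-1},x_k]$ not yet swallowed by any particle. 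If the statement's ``fingers'' are taken to carry this last piece with them the identity above is exact; otherwise $H^k_t=\G^+_{(0,t]}(x_k)-\G^+_{(0,t]}(x_{k-1})-\eta^k_t$, and I would show that $\sup_{t\le T}\eta^k_t\to0$ in probability as $\d(P)\to0$ for each $T$, using that the number of particles arriving before time $T$ grows like $\d(P)^{-3}$ while each swallows a boundary arc of positive length, so that every point of the circle is covered well before time $T$ with probability tending to $1$, and that coverage only increases with $t$.

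\emph{Second}, I would check that the evaluation map
$$
\Psi:\phi\mapsto\big(\phi^+_{(0,\cdot]}(x_k)-\phi^+_{(0,\cdot]}(x_{k-1})\big)_{k=1}^n
$$
is a measurable map $D^\circ((0,\infty),\cD_0)\to D([0,\infty),[0,1]^n)$ that is continuous at every point of $C^\circ((0,\infty),\cD_0)$; this rests on the facts assembled in Sections~\ref{CBF} and~\ref{SKOR} that each $Z^{e,+}$ is measurable and continuous on continuous flows and that $t\mapsto\phi^+_{(0,t]}(x)$ is cadlag, plus a little care at the initial time, where $\phi_{(0,t]}$ is read as $\lim_{s\downarrow0}\phi_{(s,t]}$. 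Since the preceding theorem gives $\mu^P_A\to\mu_A$ weakly on $D^\circ((0,\infty),\cD_0)$ and $\mu_A$ is carried by $C^\circ((0,\infty),\cD_0)$, the continuous mapping theorem yields weak convergence of the pushforwards under $\Psi$; combined with the first step this gives that $(H^1_t,\dots,H^n_t)_{t\ge0}$ converges weakly in $D([0,\infty),[0,1]^n)$ to the law of $\Psi$ under $\mu_A$.

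\emph{Third}, I would identify that limit using \eqref{MUW}: taking $F=\{(0,x_0),(0,x_1),\dots,(0,x_n)\}\sse\R^2$, the process $\big(\phi^+_{(0,\cdot]}(x_k)\big)_{k=0}^n$ under $\mu_A$ is a family of coalescing Brownian motions on $\R/\Z$ started from $(x_0,\dots,x_n)$, and since $x_0=x_n$ its zeroth and $n$th members coincide; thus it is a version of $(B^0,\dots,B^n)$ and $\Psi$ applied to it is $(B^1-B^0,\dots,B^n-B^{n-1})$, each gap read off in $[0,1]$ as the positively oriented circle-distance, which is the claimed limit. I expect the one genuinely substantive step to be the first --- converting the geometric description of the fingers into the conformal-map identity and disposing of the not-yet-covered boundary; after that the argument is a routine combination of the flow-space convergence already established with the characterization in Theorem~\ref{UBPR}.
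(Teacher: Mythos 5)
The paper gives no proof of this corollary --- it is presented as an immediate consequence of the preceding theorem --- so there is nothing to compare against; your three-step plan (identify $H^k_t$ with the flow gap, apply the continuous mapping theorem, identify the limit via \eqref{MUW}) is indeed the natural route and is essentially correct.

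The one thing I would flag is that your discussion of the error term $\eta^k_t$ is confused on several counts, and in a way that your own reasoning should have exposed. First, under the "fingers only" reading you entertain, one would have $H^k_0=0$ (no particles yet), whereas the claimed limit at $t=0$ is $B^k_0-B^{k-1}_0=x_k-x_{k-1}>0$; since the Skorokhod topology on $D([0,\infty),\cdot)$ pins down the value at $t=0$, the corollary would simply be false under that reading. This rules it out and tells you the intended reading is the one under which the identity $H^k_t=\G^+_{(0,t]}(x_k)-\G^+_{(0,t]}(x_{k-1})$ is exact. Second, the claim that $\sup_{t\le T}\eta^k_t\to0$ is wrong for the same reason, namely $\eta^k_0=x_k-x_{k-1}$, and your parenthetical observation "coverage only increases with $t$" makes the contradiction plain. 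Third, the covering argument itself does not work in this model: since $P\cap K_0=\{1\}$, each arriving particle contacts $\partial K_0$ at a single point, so the original circle is never covered and $\eta^k_t>0$ for all $t$. (What is true is that, for fixed $t>0$, the harmonic measure of the surviving original boundary tends to $0$ in probability in the scaling limit, but that requires a different estimate than the one you sketch, and it is not needed here.) On the positive side, the fact that $P\cap K_0=\{1\}$ is precisely what makes the exact identity available: the points $x_{k-1},x_k$ remain on $\partial K_t$ for all $t$, so the paper's harmonic-measure observation for $\G_n$ applies directly with $\xi=x_{k-1}$, $\eta=x_k$, modulo the negligible off-by-one between $\G_{N,1}$ and $\G_{N,0}$ hidden in the paper's embedding convention. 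With that cleaned up, Steps 2 and 3 are routine as you say.
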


\begin{proposition}\label{GDL}
There is a universal constant $C<\infty$ such that $\d^{-3}/C \leq \rho(P) \leq C\d^{-3}$  
and $\l(P)\le C\d^{1/4}$.
\end{proposition}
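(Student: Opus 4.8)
The plan is to estimate the three quantities $\rho(P)$, $\beta(P)$ (implicitly, through control of $\tilde g$) and $\lambda(P)$ by working with the conformal map $G=F^{-1}:D\to D_0$ and its boundary trace $g$. The basic geometric fact to extract is that, since $P$ is a connected set of diameter $\delta$ attached to $K_0$ at the point $1$, the cluster $K_0\cup P$ differs from $K_0$ only in a region of size $O(\delta)$ near $1$, and its logarithmic capacity satisfies $\operatorname{cap}(K_0\cup P)=c\delta^2+o(\delta^2)$ for some constant $c$ bounded above and below by universal constants, uniformly over all admissible $P$ of diameter $\delta$ (the two explicit examples in Section \ref{illus}, the slit and the lune, show the constant depends on shape, cf. \eqref{LCE}). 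From distortion theory (Koebe, and the fact that $G$ is close to the identity outside a neighbourhood of $1$), one gets that $\tilde g(x)=g(x)-x$ is supported, up to exponentially small errors, in an arc of length $O(\delta)$ around $0$, and that $\|\tilde g\|_\infty=O(\delta)$ there, while $\int_0^1\tilde g(x)\,dx$ and $\int_0^1\tilde g(x)^2\,dx$ are of order $\delta^2$ and $\delta^3$ respectively.

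Concretely, first I would reduce to the half-plane: write $G$ as a composition landing $D$ on the upper half-plane $H$, removing a bounded hull of diameter $O(\delta)$ based at $0$, rescaling by $\lambda=e^{\operatorname{cap}}=1+c\delta^2+o(\delta^2)$, and mapping back. On the real line, the boundary map is $y\mapsto\lambda\,\psi(y)$ where $\psi$ is the boundary trace of the half-plane map removing the hull; $\psi(y)-y$ is $O(\delta)$ and supported (up to harmonic-measure-decay tails) in $\{|y|\lesssim\delta\}$. Transferring back to the circle via the Cayley-type maps $g_1,g_4$ from Section \ref{illus} (which are bi-Lipschitz near the relevant points), one obtains $\tilde g(x)=O(\delta)\,\mathbf{1}_{\{|x|\lesssim\delta\}}+(\text{tail})$ where the tail decays fast in $|x|/\delta$. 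Then $\rho(P)^{-1}=\int_0^1\tilde g(x)^2\,dx\asymp\delta^2\cdot\delta=\delta^3$, giving $\delta^{-3}/C\le\rho(P)\le C\delta^{-3}$ for a universal $C$.

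For the localization constant $\lambda(P)=\lambda(g)$, recall it is the least $\lambda\in(0,1]$ with $\rho\int_0^1|\tilde g(x+a)\tilde g(x)|\,dx\le\lambda$ for all $a\in[\lambda,1-\lambda]$. Using $\rho\asymp\delta^{-3}$ and the pointwise bound $|\tilde g(x)|\le\|\tilde g\|_\infty\asymp\delta$ together with the support/tail estimate, the product $\tilde g(x+a)\tilde g(x)$ is nonzero only when both $x$ and $x+a$ are within $O(\delta)$ of $0\bmod 1$, which for $a\in[\lambda,1-\lambda]$ with $\lambda\gg\delta$ forces one of the two factors to be in the tail region; so $\rho\int_0^1|\tilde g(x+a)\tilde g(x)|\,dx\le\delta^{-3}\cdot\delta\cdot(\text{tail mass at scale }a)$. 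If the tail decays like a power $(|y|/\delta)^{-p}$ with $p$ large, one checks the displayed inequality holds as soon as $\lambda\gtrsim\delta^{1/4}$ — the exponent $1/4$ being what one gets by balancing $\delta^{-3}\cdot\delta\cdot\delta^{?}\le\lambda$ against the available decay; I would aim for whatever clean decay estimate the distortion bounds actually give and then read off that $\lambda(P)\le C\delta^{1/4}$ suffices.

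The main obstacle is the uniformity over all admissible particle shapes $P$ of diameter $\delta$: one needs distortion estimates for $G$ with constants depending only on $\delta$, not on the (possibly very irregular) geometry of $P$. The key leverage is that $K_0$ is a fixed smooth convex set and $P$ is attached at a single point with $\operatorname{diam}(P)=\delta$; standard harmonic-measure and conformal-distortion bounds (Beurling-type projection estimates, the Koebe $1/4$ theorem applied after uniformizing the complement) give that the harmonic measure seen from $\infty$ of $\partial K_0\setminus B(1,r)$ is $1-O(\delta^2/r^2)$-ish, uniformly in $P$, which is exactly the decay feeding the tail bounds above. Making this tail estimate quantitative and shape-independent is the real work; everything else is bookkeeping with the change-of-variables $x\mapsto e^{2\pi ix}$ and the explicit Möbius factors.
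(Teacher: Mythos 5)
Your sketch captures the right overall shape: $\tilde g$ is $O(\d)$ near the attachment point, decays away from it, and $\int_0^1\tilde g^2\,dx$ should be of order $\d^3$. The upper bound $\int_0^1\tilde g^2\,dx\le C\d^3$ (i.e.\ $\rho(P)\ge\d^{-3}/C$) does indeed follow along the lines you describe, via a Lawler-type distortion estimate $|\tilde g(x)|\lesssim\cp(P\cup K_0)/\sin(\pi x)$ for $x$ away from $0$ together with $\cp(P\cup K_0)\lesssim\d^2$, which is exactly the paper's route. But you never justify the reverse inequality $\rho(P)\le C\d^{-3}$, writing only ``$\rho(P)^{-1}\asymp\d^3$''; distortion estimates give \emph{upper} bounds on $\tilde g$, and none of what you invoke produces a lower bound. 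The paper's key idea here is entirely missing from your proposal: since $\rho\le 8\|\tilde g\|^{-3}$ (the estimate \eqref{FR3}, valid for any $f\in\cD^*$ because $f$ is monotone of degree $1$), it suffices to exhibit a single point where $|\tilde g|\ge c\d$. They do this by observing that $g^+(0)-g^-(0)=\PP(\t_P\le\t_{K_0})$ is the harmonic measure of $P$ from infinity, and lower-bounding that harmonic measure by a reflection argument using a point of $P$ at distance $\ge\d/2$ from the attachment point. Without some such lower bound on the extremal displacement, the claimed two-sided bound on $\rho(P)$ is not established, and this is the genuinely nontrivial half of the proposition.

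Two secondary points. First, the tail of $\tilde g$ is not ``exponentially small''; Lawler's estimate gives power-law decay $|\tilde g(x)|\lesssim\d^2/|\sin\pi x|$, which makes $\int_0^1|\tilde g|\,dx\asymp\d^2\log(1/\d)$ rather than $\d^2$ as you assert. Second, with this power-law tail, the $\l(P)$ estimate requires care in the region where $x$ and $x+a$ are both at distance $\gtrsim\d$ from $0\bmod1$: if one bounds only one factor by the Lawler tail and the other by the uniform $O(\d)$ bound, the resulting integral is of order $\log(1/\d)$ after multiplying by $\rho$, which does not tend to zero. To get $o(1)$, let alone $O(\d^{1/4})$, one must use the Lawler bound on \emph{both} factors in that region and exploit $\sin\pi a\gtrsim\d^{1/4}$, giving something of order $\d^{3/4}\log(1/\d)$. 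Your ``balance the decay'' heuristic does not pin this down, and as written would not obviously close.
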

\begin{proof}
It is shown in Lawler \cite{L} that there exists some universal constant $c<\infty$ such that if 
$c\d\leq x\leq1-c\d$, then
$$
|\tilde{g}(x)| \leq \frac{3 \cp(P \cup K_0)}{2\pi|1 - e^{2 \pi ix}|} = \frac{3\cp(P \cup K_0)}{4\pi \sin(\pi x)},
$$
where $\cp(P \cup K_0) = - \log G'(\infty)$ is the logarithmic capacity of $P \cup K_0$. If $K_1 \sse K_2$, 
then $\cp(K_1) \leq \cp(K_2)$ (see, for example, \cite{L}), 
and hence $\cp(P \cup K_0) \le\cp(L\cup K_0) \leq c'\d^2$ 
for some constant $c' > 0$, by the estimate \eqref{LCE}.

Now suppose $x \in (-c\d, c\d)$. Then $g(-c\d) \leq g(x) \leq g(c\d)$ and hence, for some $c''>0$,
$$
|\tilde{g}(x)|
\le|\tilde{g}(c\d)|\vee|\tilde{g}(-c\d)|+2c\d 
\le\frac{3c'\d^2}{4\pi\sin(\pi c\d)}+2c\d
\le c''\d.
$$
Hence, we have
\begin{align*}
\rho(g)^{-1}&=\int_0^1|\tilde{g}(x)|^2 dx
\le2c\d(c''\d)^2+\int_{c\d}^{1-c\d}\frac{9c'^2\d^4}{16\pi^2\sin^2(\pi x)}dx\\
&=2cc''^2\d^3+\frac{9c'^2\d^4}{8\pi^3}\cot(c\pi\d)=O(\d^3).
\end{align*}

To show the other side of the inequality, recall the estimate \eqref{FR3}, which implies
that $\rho(g)\le8\|\tilde{g}\|^{-3}$. So it is enough to show that there exists
some $x \in [0,1)$ with $|\tilde{g}(x)| \geq c'''\d$ for some $c'''>0$.
Define
$$
\tau_A = \inf \{t>0: B_t \in A \},
$$
where $B_t$ is a Brownian motion starting from infinity. Since Brownian motion is invariant under conformal transformations,
$$
g^+(0)-g^-(0)
=\PP(\t_P\leq\t_{K_0}).
$$
Since $P$ has diameter $\d$, there exist two points, $w, z \in \overline{P}$, 
with $w \in \overline{P} \cap K_0$ and $|w-z| \geq \d/2$. Since $P$ is connected there is a path, 
$l \sse \overline{P}$ joining $w$ and $z$. Let $r$ be the line joining 0 to $z$. Reflect $l$ in $r$ and call the reflection $l'$ and the reflection of point $w$, $w'$. Now
$$
2 \mathbb{P}(\tau_l \leq \tau_{K_0})\geq \mathbb{P}(\tau_{l \cup l'} \leq \tau_{K_0} )
$$
and 
$$
\mathbb{P}(\tau_{l \cup l'} \leq \tau_{K_0} ) \geq \mathbb{P}(\tau_r \leq \tau_{K_0}) \geq 8c'''(|z|-1),
$$
by the limit \eqref{DTHN}, but also
$$
\mathbb{P}(\tau_{l \cup l'} \leq \tau_{K_0}) \geq \mathbb{P}(B_{\tau_{K_0}} \in \text{arc } (w-w')) \geq 8c'''|w-w'|
$$
for some constant $c'''>0$. Either $(|z|-1)$ or $|w-w'| \geq \d/4$ which finishes the argument.

Finally, to show that $\l(g)\le C\d^{1/4}$ observe that for $\d$ sufficiently small, 
if $a \in [\d^{1/4}, 1- \d^{1/4}]$, then at most one of $x$ and $x+a$ lie in 
$\cup_{n \in \mathbb{Z}}[n-c\d, n+c\d]$. Hence
$$
\rho\int_0^1|\tilde{g}(x+a)\tilde{g}(x)|dx
\le2\rho c''\d\int_{c\d}^{1-c\d}\frac{3c'\d^2}{4\pi\sin(\pi x)}dx 
=\frac{3\rho c'c''\d^3}{\pi^2}\log\tan(\pi c\d)
=O(\d).
$$
Hence there exists some $C>0$ such that, for all $a \in [C\d^{1/4}, 1- C\d^{1/4}]$, we have
$$
\rho \int_0^1|\tilde{g}(x+a)\tilde{g}(x)|dx < C\d^{1/4}.
$$
\end{proof}

\section{Appendix}
\subsection{Some properties of the space $\cD$ of non-decreasing functions
of degree $1$}
We give proofs in this subsection of a number of assertions made in Section \ref{CBF}.
\begin{proposition}
\label{fcrossbijection}
The map $f\mapsto f^\times:\cD\to\cS$ is a well-defined bijection, with inverse given by
$$
f^-(x)=\inf\{t+f^\times(t):t\in\R,x=t-f^\times(t)\},\q
f^+(x)=\sup\{t+f^\times(t):t\in\R,x=t-f^\times(t)\}.
$$
\end{proposition}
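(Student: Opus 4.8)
The plan is to verify three things: that $f^\times$ is well-defined for each $f\in\cD$ and lies in $\cS$; that the two displayed formulas recover $f^-$ and $f^+$ from $f^\times$, so $f\mapsto f^\times$ is injective with a left inverse; and that every $g\in\cS$ arises as $f^\times$ for some $f\in\cD$, giving surjectivity.

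First I would fix $f\in\cD$ and recall that $x\mapsto x+f^-(x)$ and $x\mapsto x+f^+(x)$ are both non-decreasing, with $x+f^+(x)$ strictly increasing (since $f^+$ is non-decreasing and $\id$ is strictly increasing), and that $f^-(x)\le f^+(x)$ with $f^-(x)=f^+(x)$ off the countable jump set. The degree $1$ property gives $x+f^\pm(x)\to\pm\infty$ as $x\to\pm\infty$. Hence for each $t\in\R$ the set of $x$ with $\tfrac{x+f^-(x)}2\le t\le\tfrac{x+f^+(x)}2$ is a nonempty closed interval; it is a single point because if $x_1<x_2$ both worked then $x_2+f^-(x_2)\le 2t\le x_1+f^+(x_1)\le x_1+f^-(x_2)$ (using $f^+(x_1)\le f^-(x_2)$ for $x_1<x_2$, which follows from monotonicity), forcing $x_2\le x_1$, a contradiction. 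So $f^\times(t)=t-x$ is well-defined. For $g:=f^\times\in\cS$: periodicity with period $1$ is immediate from $f^+(x+1)=f^+(x)+1$, which sends the defining $x$ for $t+1$ to the $x+1$; and $g$ is a contraction because if $t_1<t_2$ correspond to $x_1\le x_2$ then $g(t_2)-g(t_1)=(t_2-t_1)-(x_2-x_1)\le t_2-t_1$ and also $g(t_1)-g(t_2)=(x_2-x_1)-(t_2-t_1)$; bounding $x_2-x_1$ below, the inequality $x_1+f^+(x_1)\ge 2t_1$ applied together with the chain through $t_2$ forces $x_2-x_1\ge -(t_2-t_1)$ wait—more cleanly, from $\tfrac{x_1+f^+(x_1)}{2}\ge t_1$ and $\tfrac{x_2+f^-(x_2)}{2}\le t_2$ and monotonicity $f^+(x_1)\le f^-(x_2)$ one gets $x_1+f^-(x_2)\ge 2t_1$ and $x_2+f^-(x_2)\le 2t_2$, so $x_2-x_1\le 2(t_2-t_1)$; combined with $x_1\le x_2$ this yields $|g(t_1)-g(t_2)|\le t_2-t_1$.

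Next I would check the inversion formulas. Given $x\in\R$, the $t$'s with $x=t-g(t)$ are exactly the $t$ for which $x$ is the point associated to $t$, i.e.\ $\tfrac{x+f^-(x)}2\le t\le\tfrac{x+f^+(x)}2$; this uses that $t\mapsto t-g(t)$ is non-decreasing (it equals the generalized inverse of the strictly increasing $x\mapsto x+f^+(x)$-type map) so the preimage of $x$ is an interval, and one identifies its endpoints. On that interval, $t+g(t)=2t-x$ ranges over $[x+f^-(x),x+f^+(x)]$, so the infimum is $x+f^-(x)$ and the supremum is $x+f^+(x)$, giving $f^\mp(x)$ after subtracting $x$ — exactly the claimed formulas. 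Care is needed at $x$ where the preimage could a priori be a single point: then $f^-(x)=f^+(x)$ and both inf and sup agree, consistent with $f$ being continuous there. This shows $f\mapsto f^\times$ is injective. For surjectivity, given $g\in\cS$ define $f^-(x),f^+(x)$ by the two formulas over the (nonempty, by the contraction and periodicity of $g$, since $t\mapsto t-g(t)$ is surjective onto $\R$) set $\{t:x=t-g(t)\}$; one then checks $f^-\in\cL$, $f^+\in\cR$, the degree $1$ property (from periodicity of $g$), $f^-=(f^+)(\cdot-)$, and finally that $(f)^\times=g$ by running the forward construction.

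The main obstacle is the bookkeeping at points where $t\mapsto t-g(t)$ is constant on a nondegenerate interval (equivalently where $f$ has a jump) versus where $x\mapsto x+f^\pm(x)$ is constant on an interval (where $f$ is flat): one must confirm that the closed interval in the definition of $f^\times$ is always a genuine point, that the preimage sets in the inversion formulas are always nonempty closed intervals, and that the inf/sup pick out precisely the left- and right-continuous versions and not something off by the jump. All of this is a careful but routine analysis of monotone functions and their generalized inverses; I expect no conceptual difficulty, only the need to handle the degenerate cases cleanly, ideally by phrasing everything in terms of the strictly increasing bijection $t\mapsto$ (the midpoint map) and its inverse.
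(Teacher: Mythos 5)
Your proposal is correct and follows essentially the same route as the paper: well-definedness via strict monotonicity of $x\mapsto x+f^+(x)$, the contraction bound $0\le x_2-x_1\le 2(t_2-t_1)$ using $f^+(x_1)\le f^-(x_2)$, recovery of $f^\pm(x)$ as the inf/sup of $t+f^\times(t)$ over the preimage interval of $x$, and surjectivity by reconstructing $h\in\cD$ from $g\in\cS$ and verifying $h^\times=g$. The one minor organizational difference is that you verify the inversion formulas directly at every $x$ (via the preimage interval $[(x+f^-(x))/2,(x+f^+(x))/2]$), whereas the paper checks $I_x=\{f(x)\}$ only at continuity points and concludes $h=f$ by density; both are fine.
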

\begin{proof}
Recall that $f^\times(t)=t-x$, where $x$ is the unique point such that $f^-(x)\le2t-x\le f^+(x)$.
The periodicity of $f^\times$ is an easy consequence of the degree $1$ condition. We now show that
$f^\times$ is a contraction. Fix $s,t\in\R$ and suppose that
$f^\times(s)=s-y$. Switching the roles of $s$ and $t$ if necessary, we may assume without
loss that $x\ge y$. If $x=y$, then $f^\times(s)-f^\times(t)=s-t$. On the other hand, if $x>y$, then
$2s-y\le f^+(y)\le f^-(x)\le2t-x$, so
$$
-(t-s)\le-(t-s)+(2t-x)-(2s-y)=f^\times(t)-f^\times(s)=(t-s)-(x-y)<t-s.
$$
In both cases, we see that $|f^\times(t)-f^\times(s)|\le|t-s|$. Hence $f^\times\in\cS$.

Suppose now that $g\in\cS$. Consider, for each $x\in\R$, the set
$$
I_x=\{t+g(t):t\in\R,x=t-g(t)\}.
$$ 
Since $g$ is a contraction, these sets are all intervals,
and, since $g$ is bounded, they cover $\R$.
For $x,y\in\R$ with $x>y$, and for $s,t\in\R$ with $x=t-g(t),y=s-g(s)$, we have
$t-s-(g(t)-g(s))=x-y>0$, so $s\le t$, and so
$$
t+g(t)-(s+g(s))=t-s+(g(t)-g(s))\ge0.
$$
Define $h^+(y)=\sup I_y$ and $h^-(x)=\inf I_x$.
We have shown that $h^+(y)\le h^-(x)$.
Moreover, since the intervals $I_x$ cover $\R$, the functions $h^\pm$ must be
the left-continuous and right-continuous versions of a non-decreasing function $h$,
which then has the degree $1$ property, because $g$ is periodic.
Thus $h\in\cD$.

For each $t\in\R$, we have $h^\times(t)=t-x$,
where $2t-x\in I_x$, and so $2t-x=s+g(s)$ for some $s\in\R$
with $x=s+g(s)$. Then $s=t$ and so $h^\times(t)=g(t)$. Hence $h^\times=g$.
On the other hand, if we take $g=f^\times$ and if $x$ is a point of continuity of $f$,
then we find $I_x=\{f(x)\}$, so $h^+(x)=h^-(x)=f(x)$. Hence $h=f$.
We have now shown that $f\mapsto f^\times:\cD\to\cS$ is a bijection, and that its inverse
has the claimed form.
\end{proof}

\begin{proposition}
For $f,g\in\cD$ and $\ve>0$,
$$
d_\cD(f,g)\le\ve\iff f^-(x-\ve)-\ve\le g^-(x)\le g^+(x) \le f^+(x+\ve)+\ve
\text{ for all }x\in\R.
$$
Moreover, for any sequence $(f_n:n\in\N)$ in $\cD$,
$$
f_n\to f\text{ in }\cD\q\iff\q f_n^+(x)\to f(x)\text{ at all points $x\in\R$ where $f$ is continuous}.
$$
\end{proposition}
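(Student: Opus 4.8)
The plan is to argue entirely through the bijection $f\mapsto f^\times:\cD\to\cS$ of Proposition \ref{fcrossbijection} and the identity $d_\cD(f,g)=\|f^\times-g^\times\|$. The key fact, recalled just before that proposition, is that for each $t$ there is a unique $x$ with $\frac{x+f^-(x)}2\le t\le\frac{x+f^+(x)}2$, and then $f^\times(t)=t-x$; write $J^f_x=[\frac{x+f^-(x)}2,\frac{x+f^+(x)}2]$, so that $\{J^f_x:x\in\R\}$ partitions $\R$ and $f^\times$ is single-valued on this partition.

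For the first equivalence ($\Rightarrow$): fix $x$ and evaluate at the endpoints of $J^g_x$. Taking $t_0=\frac{x+g^+(x)}2$ gives $g^\times(t_0)=\frac{g^+(x)-x}2$; writing $f^\times(t_0)=t_0-x_1$ with $t_0\in J^f_{x_1}$, the bound $f^\times(t_0)\ge g^\times(t_0)-\ve$ forces $x_1\le x+\ve$, whence $f^+(x+\ve)\ge f^+(x_1)\ge 2t_0-x_1\ge g^+(x)-\ve$. The mirror choice $t_0=\frac{x+g^-(x)}2$ gives $g^-(x)\ge f^-(x-\ve)-\ve$. For ($\Leftarrow$): fix $t$, let $x,x_1$ be the points with $t\in J^g_x$ and $t\in J^f_{x_1}$, so $g^\times(t)-f^\times(t)=x_1-x$, and I claim $|x_1-x|\le\ve$. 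If $x_1>x+\ve$, then by monotonicity $f^-(x_1)\ge f^+(x+\ve)\ge g^+(x)-\ve\ge 2t-x-\ve$, contradicting $f^-(x_1)\le 2t-x_1<2t-x-\ve$; the case $x_1<x-\ve$ is symmetric, using $f^+(x_1)\le f^-(x-\ve)\le g^-(x)+\ve$. Taking the supremum over $t$ gives $d_\cD(f,g)\le\ve$.

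The forward implication of the second statement is then immediate: put $\ve_n=d_\cD(f_n,f)\to0$ and apply the first equivalence to get $f^-(x-\ve_n)-\ve_n\le f_n^-(x)\le f_n^+(x)\le f^+(x+\ve_n)+\ve_n$; at a continuity point $x$ of $f$ both outer terms tend to $f(x)$, by left-continuity of $f^-$, right-continuity of $f^+$ and $f^-(x)=f^+(x)=f(x)$, so $f_n^+(x)\to f(x)$. For the converse I would use compactness. Each $f_n^\times$ is $1$-Lipschitz and $1$-periodic, hence has oscillation at most $\frac12$; and fixing a continuity point $x_0$ of $f$, the value $f_n^\times(\frac{x_0+f_n^+(x_0)}2)=\frac{f_n^+(x_0)-x_0}2$ is bounded, since $f_n^+(x_0)\to f(x_0)$. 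Hence the tail of $(f_n^\times)$ is uniformly bounded and equicontinuous, so by the Arzel\`a--Ascoli theorem it is relatively compact in $C(\R/\Z)$. Any uniform subsequential limit equals $k^\times$ for a unique $k\in\cD$; applying the forward implication already proved along that subsequence gives $f_n^+(x)\to k(x)$ at continuity points of $k$, so $f$ and $k$ agree on the dense co-countable set of common continuity points, and therefore $f^+=k^+$ by right-continuity, i.e.\ $k=f$. Since every subsequence of $(f_n^\times)$ has a further subsequence converging uniformly to $f^\times$, we conclude $f_n^\times\to f^\times$ uniformly, that is $f_n\to f$.

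The bookkeeping with monotonicity is routine; the two places needing care are the legitimacy of evaluating $f^\times$ at a chosen endpoint of $J^f_x$ (this rests on the single-valuedness of $f^\times$ on the partition $\{J^f_x\}$, supplied by the uniqueness statement preceding Proposition \ref{fcrossbijection}) and, in the final step, the fact that pointwise convergence at continuity points alone carries no uniformity, so the equicontinuity coming from the contraction property of $f^\times$ is genuinely needed to upgrade it.
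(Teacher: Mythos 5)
Your proof of the first equivalence is correct and, up to bookkeeping, runs along the same lines as the paper's: one reads off the inequalities by locating $t$ in the appropriate interval $J^f_{x_1}$ and using $|f^\times(t)-g^\times(t)|\le\ve$. A minor structural difference is that the paper first establishes the $\Rightarrow$ inequalities at continuity points of $g$ and then extends by one-sided limits, whereas you work directly with the two endpoints of $J^g_x$ for arbitrary $x$; and in the $\Leftarrow$ direction you argue for every $t$, which neatly sidesteps the paper's (inessential) appeal to the supremum being attained. The forward implication of the second statement is then the same sandwich argument in both.

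Where you genuinely depart from the paper is in the converse of the second statement. You extract a uniformly convergent subsequence of $(f_n^\times)$ via Arzel\`a--Ascoli (uniform boundedness coming from the oscillation bound $\tfrac12$ plus boundedness of $f_n^\times$ at one pinned point, equicontinuity from the $1$-Lipschitz property), identify any subsequential limit $k^\times$ with $f^\times$ by comparing the two on the dense set of common continuity points, and conclude by the ``every subsequence has a further subsequence converging to the same limit'' principle. The paper instead proves $f_n^\times(t)\to f^\times(t)$ for each fixed $t$ directly: it picks continuity points $y_1<x<y_2$ of $f$ close to $x=t-f^\times(t)$, uses the strict inequalities $f(y_1)+y_1<2t<f(y_2)+y_2$ together with the assumed pointwise convergence at $y_1,y_2$ to trap $x_n\in[y_1,y_2]$ eventually, and then upgrades pointwise to uniform convergence by equicontinuity. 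Both routes are correct; yours is a little heavier on machinery but cleanly reduces the uniqueness-of-limit issue to the already-proved forward implication, while the paper's is more elementary and yields the pointwise estimate explicitly. One small remark on your writeup: the statement ``$f$ and $k$ agree on the dense co-countable set of common continuity points, and therefore $f^+=k^+$ by right-continuity'' is fine, but it deserves the one-line justification that two right-continuous non-decreasing functions agreeing on a dense set agree everywhere (take a decreasing sequence of common continuity points).
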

\begin{proof}
Suppose that $d_\cD(f,g)\le\ve$ and that $x$ is a continuity point of $g$.
Then $g(x)=t+g^\times(t)$ for some $t\in\R$ with $x=t-g^\times(t)$.
We must have $x+\ve\ge t-f^\times(t)$ and $g(x)\le t+f^\times(t)+\ve$,
so $f^+(x+\ve)+\ve\ge t+f^\times(t)+\ve\ge g(x)$.
Similarly $f^-(x-\ve)-\ve\le g(x)$. These inequalities extend to all $x\in\R$
by taking left and right limits along continuity points.

Conversely, suppose that $t \in \R$ is such that $|f^\times(t)-g^\times(t)| = d_\cD(f,g)$
and let $x = t-g^\times(t)$ and $y=t-f^\times(t)$. Then $x$ is the unique point with
$g^-(x)+x \leq 2t \leq g^+(x)+x$ and $y$ is the unique point such that
$f^-(y)+y \leq 2t \leq f^+(y)+y$. Hence $f^-(x-\ve)-\ve\le g^-(x)\le g^+(x) \le f^+(x+\ve)+\ve$
implies $y \in [x - \ve, x+ \ve]$ and so $d_\cD(f,g) = |y-x| \le \ve$.

It follows directly that for any sequence $(f_n:n\in\N)$ in $\cD$,
if $d_\cD(f_n,f) \rightarrow 0$ as $n \rightarrow \infty$,
then $f_n^+(x)\to f(x)$ at all points $x\in\R$ where $f$ is continuous.

Now suppose $f_n^+(x)\to f(x)$ at all points $x\in\R$ where $f$ is continuous.
By equicontinuity, it will suffice to show that $f_n^\times(t) \rightarrow f^\times(t)$ for each $t\in\R$.
Set $x=t-f^\times(t)$ and $x_n=t-f^\times_n(t)$.
Given $\ve>0$, choose $y_1\in(x-\ve,x)$ and $y_2\in(x,x+\ve)$, both points
of continuity of $f$. Now $f(y_1)+y_1<2t<f(y_2)+y_2$, so there exists $N\in\N$
such that for all $n \geq N$, we have $f^+_n(y_1)+y_1<2t<f^+_n(y_2)+y_2$, which implies
$x_n\in[y_1,y_2]$ and hence $|f_n^\times(t) - f^\times(t)| < \ve$, as required.
\end{proof}
\begin{proposition}\label{WFL}
Suppose $f_n\to f, g_n\to g, h_n\to h$ in $\cD$ with
$h_n^+\le f_n^+\circ g_n^+$ for all $n$. Then $h^+\le f^+\circ g^+$.
\end{proposition}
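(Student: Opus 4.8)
The plan is to push everything through the $\epsilon$--characterization of $d_\cD$ established in the previous proposition, rather than through ``good'' points $x$: choosing $x$ outside the discontinuity set of $f$ is awkward here, since $g$ can meet that set along an interval. First I would fix $x\in\R$ and $\epsilon>0$, and, using $f_n\to f$, $g_n\to g$, $h_n\to h$ in $\cD$, pick a single index $n$ with $d_\cD(f_n,f)\le\epsilon$, $d_\cD(g_n,g)\le\epsilon$ and $d_\cD(h_n,h)\le\epsilon$ all at once. Applying the $\epsilon$--characterization to each of the three pairs (and the symmetry of $d_\cD$) then gives, for all real arguments,
$$
h^+(x)\le h_n^+(x+\epsilon)+\epsilon,\q g_n^+(y)\le g^+(y+\epsilon)+\epsilon,\q f_n^+(z)\le f^+(z+\epsilon)+\epsilon.
$$

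Next I would chain these together. Starting from the first inequality, apply the hypothesis $h_n^+\le f_n^+\circ g_n^+$, then insert the second inequality into the argument of $f_n^+$ (using that $f_n^+$ is non-decreasing), then the third; this yields
$$
h^+(x)\le f^+\big(g^+(x+2\epsilon)+2\epsilon\big)+2\epsilon.
$$
Now let $\epsilon\downarrow0$: since $g^+$ is non-decreasing and right-continuous, $g^+(x+2\epsilon)+2\epsilon\downarrow g^+(x)$, and since $f^+$ is non-decreasing and right-continuous, $f^+$ of that quantity decreases to $f^+(g^+(x))$, while the additive $2\epsilon$ vanishes. Hence $h^+(x)\le f^+(g^+(x))$, and since $x$ is arbitrary this is the assertion.

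There is no genuine difficulty here; the one place to be careful is the final limit. One must verify that the argument of $f^+$ approaches $g^+(x)$ \emph{from above}, so that it is right-continuity of $f^+\in\cR$ (not left-continuity) that applies, and likewise that $x+2\epsilon\downarrow x$ so that right-continuity of $g^+$ governs the inner limit. The remaining manipulations are routine bookkeeping with the defining inequalities, so I expect the write-up to be short.
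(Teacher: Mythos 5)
Your proof is correct, and it takes a genuinely different route from the paper's. The paper reduces to the (co-countable, dense) set of points $x$ at which $g$ and $h$ are both continuous, obtains the inequality there by choosing an auxiliary continuity point $y>g(x)$ of $f$ with $f(y)<f^+(g(x))+\ve$ and using $g_n^+(x)\le y$ and $f_n^+(y)\to f(y)$ eventually, and then implicitly extends to all $x$ by right-continuity. You instead push the whole problem through the $\ve$-characterization of $d_\cD$, never needing to select special points: for a single $n$ with all three distances at most $\ve$ you chain
$$
h^+(x)\le h_n^+(x+\ve)+\ve\le f_n^+\bigl(g_n^+(x+\ve)\bigr)+\ve\le f_n^+\bigl(g^+(x+2\ve)+\ve\bigr)+\ve\le f^+\bigl(g^+(x+2\ve)+2\ve\bigr)+2\ve,
$$
and then send $\ve\downarrow0$, observing that $g^+(x+2\ve)+2\ve\downarrow g^+(x)$ strictly from above (so right-continuity of $f^+$ is exactly what applies). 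Your version has the advantage of being pointwise uniform over all $x$ and not requiring any reduction step, but it depends on having the $\ve$-characterization of the metric on hand; the paper's argument is more self-contained but requires the small manoeuvre of picking a good $y$. Both are short and correct, and the attention you pay to the direction from which the argument of $f^+$ is approached is exactly the right thing to check.
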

\begin{proof}
It will suffice to establish the inequality at all
points $x$ where $g$ and $h$ are both continuous.
Given $\ve>0$, since $f^+$ is right-continuous, there exists a point $y>g(x)$
where $f$ is continuous and such that $f(y)<f^+(g(x))+\ve$.
Then $f_n^+(y) < f^+(g(x))+\ve$ and $g_n^+(x)\le y$ eventually, so
$$
h_n^+(x)\le f_n^+(g_n^+(x))\le f_n^+(y)<f^+(g(x))+\ve
$$
eventually. Hence $h^+(x)=\lim_{n\to\infty}h_n^+(x)\le f^+(g^+(x))$,
as required.
\end{proof}

\subsection{Some properties of the the continuous flow-space $C^\circ(\R,\cD)$
and cadlag flow-space $D^\circ(\R,\cD)$}
We give proofs in this subsection of a number of assertions made in Sections \ref{CBF} and \ref{SKOR}.
\begin{proposition}
For $(s,x)\in\R^2$ and $\phi\in D^\circ(\R,\cD)$, the map
$$
t\mapsto\phi_{(s,t]}^+(x):[s,\infty)\to\R
$$
is cadlag, and is moreover continuous whenever $\phi\in C^\circ(\R,\cD)$.
\end{proposition}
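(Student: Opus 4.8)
The plan is to control $\phi^+_{(s,u]}(x)$ directly, for $u$ near $t$, by sandwiching it with the weak flow property \eqref{WF} and the cadlag property, rather than going through $d_\cD$-convergence of $\phi_{(s,u]}$ (which by itself would pin the evaluation down only at the spatial continuity points of the limiting element of $\cD$).

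First I would record two elementary facts: for $g\in\cD$ and $x\in\R$ one has $g^+(x)=\inf_{x'>x}g^-(x')$, and $g^-(x')\ge g^+(x)$ whenever $x'>x$; and, since $2d_\cD(g,\id)=\|g-\id\|$, one has $|g^\pm(y)-y|\le\|g-\id\|$ for all $y$. For right continuity at $t\ge s$: for $u>t$ write $(s,u]=(s,t]\oplus(t,u]$, so \eqref{WF} gives $\phi^-_{(t,u]}\circ\phi^-_{(s,t]}\le\phi^-_{(s,u]}\le\phi^+_{(s,u]}\le\phi^+_{(t,u]}\circ\phi^+_{(s,t]}$. The upper inequality yields $\phi^+_{(s,u]}(x)\le\phi^+_{(t,u]}(\phi^+_{(s,t]}(x))\le\phi^+_{(s,t]}(x)+\|\phi_{(t,u]}-\id\|$; for the matching lower bound I would compute $\phi^+_{(s,u]}(x)=\inf_{x'>x}\phi^-_{(s,u]}(x')\ge\inf_{x'>x}\phi^-_{(t,u]}(\phi^-_{(s,t]}(x'))\ge\phi^-_{(t,u]}(\phi^+_{(s,t]}(x))\ge\phi^+_{(s,t]}(x)-\|\phi_{(t,u]}-\id\|$, using $\phi^-_{(s,t]}(x')\ge\phi^+_{(s,t]}(x)$ and the monotonicity of $\phi^-_{(t,u]}$. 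Since $(t,u]\to\es$ as $u\downarrow t$, \eqref{PHID} together with $\phi_\es=\id$ gives $\|\phi_{(t,u]}-\id\|\to0$, so $\phi^+_{(s,u]}(x)\to\phi^+_{(s,t]}(x)$.

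For the existence of left limits: for $u<t$ write $(s,t)=(s,u]\oplus(u,t)$ and argue in exactly the same way from \eqref{WF} to obtain $|\phi^+_{(s,u]}(x)-\phi^+_{(s,t)}(x)|\le\|\phi_{(u,t)}-\id\|$; by the cadlag property, $\|\phi_{(u,t)}-\id\|\to0$ as $u\uparrow t$, so $\lim_{u\uparrow t}\phi^+_{(s,u]}(x)$ exists and equals $\phi^+_{(s,t)}(x)$. Together with the previous paragraph this shows $t\mapsto\phi^+_{(s,t]}(x)$ is cadlag. When $\phi\in C^\circ(\R,\cD)$ one has $\phi_{(s,t)}=\phi_{(s,t]}$ for every $t$, so the left limit coincides with the value, and the map is continuous.

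The step I expect to be the real obstacle is the lower bound in the right-continuity argument at points $x$ where $\phi_{(s,t]}$ is discontinuous: the naive estimate $\phi^+_{(s,u]}(x)\ge\phi^-_{(t,u]}(\phi^-_{(s,t]}(x))$ only returns $\phi^-_{(s,t]}(x)$, which is strictly below $\phi^+_{(s,t]}(x)$ at such $x$. The device that rescues it is the identity $g^+(x)=\inf_{x'>x}g^-(x')$, which lets one evaluate $\phi^-_{(s,t]}$ just to the right of $x$ (where it already dominates $\phi^+_{(s,t]}(x)$) before applying the near-identity increment $\phi_{(t,u]}$. I would also be careful that it is $\|\phi_{(t,u]}-\id\|\to0$ (closed right endpoint) that is needed, not merely the definitional $\|\phi_{(t,u)}-\id\|\to0$, which is why I appeal to \eqref{PHID} with the empty interval rather than to the definition of cadlag weak flow directly.
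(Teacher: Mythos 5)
Your proof is correct, and the overall strategy (sandwich $\phi^+_{(s,u]}(x)$ between bounds produced by the weak flow property, then send $\|\phi_{(t,u]}-\id\|$ or $\|\phi_{(u,t)}-\id\|$ to zero) is the same as the paper's. The genuine difference is in how the awkward lower bound is obtained at points $x$ where $\phi_{(s,t]}$ jumps. The paper first establishes the two-sided estimate at continuity points $x$ of $\phi_{(s,t]}$, where $\phi^-_{(s,t]}(x)=\phi^+_{(s,t]}(x)$ makes the chain of inequalities immediate, and then extends the estimate to all $x$ by approximating from the right and invoking the right continuity of both $\phi^+_{(s,u]}$ and $\phi^+_{(s,t]}$. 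You instead work at every $x$ directly, via the identity $g^+(x)=\inf_{x'>x}g^-(x')$, pushing the evaluation slightly to the right of $x$ before composing with the near-identity increment. Both arguments are valid; yours is a touch more self-contained because it removes the separate "extend by right continuity" step, while the paper's is a touch shorter on the page because it defers the discontinuity issue to a one-line approximation remark.

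One thing worth flagging: to control $\|\phi_{(t,u]}-\id\|$ (closed right endpoint), you invoke the later Proposition giving \eqref{PHID}. The paper is in the same position — its proof also uses $d_\cD(\phi_{(t,u]},\id)\to 0$, which is the half-open variant \eqref{WFP2} stated only in the subsequent proposition and justified there merely with "by combining the cadlag and weak flow properties." There is no circularity (that later proof does not rely on the present statement), but a fully self-contained write-up would prove the small lemma directly: for $u<u'$, decomposing $(t,u')=(t,u]\oplus(u,u')$ and arguing as you do gives $\|\phi^\pm_{(t,u]}-\phi^\pm_{(t,u')}\|\le\|\phi_{(u,u')}-\id\|$, whence $\|\phi_{(t,u]}-\id\|\le\|\phi_{(t,u')}-\id\|+\|\phi_{(u,u')}-\id\|\to 0$. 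For the left-limit part you only need $\|\phi_{(u,t)}-\id\|\to 0$, which is the literal cadlag definition, so no such lemma is needed there; your proof is correct to distinguish the two cases.
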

\begin{proof}
Given $t\ge s$ and $\ve>0$, we can choose $\d>0$ so that $d_\cD(\phi_{(t,u]},\id)<\ve/2$ for all $u\in(t,t+\d]$.
For such $u$ and for $x$ a point of continuity of $\phi_{(s,t]}$, we have
$$
\phi_{(s,t]}^+(x)-\ve=\phi_{(s,t]}^-(x)-\ve\leq\phi_{(t,u]}^-\circ\phi_{(s,t]}^-(x)\leq 
\phi_{(s,u]}^-(x)\leq 
\phi_{(s,u]}^+(x) \leq \phi_{(t,u]}^+ \circ \phi_{(s,t]}^+(x) \leq \phi_{(s,t]}^+(x) +  \ve,
$$
so $|\phi_{(s,u]}^+(x)-\phi_{(s,t]}^+(x)|\leq\ve$. The final estimate extends to all $x$ by right-continuity.
Hence the map is right continuous. A similar argument shows that, for $u\in(s,t)$, we have
$|\phi_{(s,u]}^+(x)-\phi_{(s,t)}^+(x)|\to0$ as $u\to t$, so that the map has a left limit at $t$
given by $\phi_{(s,t)}^+(x)$. Finally, if $\phi\in C^\circ(\R,\cD)$, then
$\phi_{(s,t)}=\phi_{(s,t]}$, so the map is continuous.
\end{proof}

\begin{proposition}\label{PHICONT}
For all $\phi\in C^\circ(\R,\cD)$, the map $(s,t)\mapsto\phi_{ts}:\{(s,t):s\le t\}\to\cD$ is continuous.
Moreover, for all $\phi\in D^\circ(\R,\cD)$ and for any sequence of finite intervals $I_n\to I$, we have $\phi_{I_n}\to\phi_I$.
\end{proposition}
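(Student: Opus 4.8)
The plan is to base both assertions on one elementary consequence of the weak flow property \eqref{WF}, which I shall call the \emph{sandwich estimate}: if $J=A\oplus I\oplus B$ and $\|\phi_A-\id\|\le\eta$, $\|\phi_B-\id\|\le\eta$, then $d_\cD(\phi_I,\phi_J)\le\eta$. Indeed, from \eqref{WF} we have $\phi_J^+\le\phi_B^+\circ\phi_I^+\circ\phi_A^+$; since $\phi_A^+(x)\le x+\eta$, $\phi_I^+$ is non-decreasing, and $\phi_B^+(y)\le y+\eta$, this gives $\phi_J^+(x)\le\phi_I^+(x+\eta)+\eta$, and the lower inequality in \eqref{WF} gives $\phi_J^-(x)\ge\phi_I^-(x-\eta)-\eta$; these are exactly the inequalities characterising $d_\cD(\phi_I,\phi_J)\le\eta$. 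I use freely that $2d_\cD(f,\id)=\|f-\id\|$, so that smallness of $d_\cD(f,\id)$ and of $\|f-\id\|$ are equivalent.

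For the first assertion, fix $(s_0,t_0)$ with $s_0\le t_0$ and let $(s,t)\to(s_0,t_0)$ with $s\le t$. Assume first $s_0<t_0$, so eventually $s<t$; set $s'=s\wedge s_0$, $t'=t\vee t_0$ and $J=(s',t')$. Then $J=(s',s]\oplus(s,t)\oplus[t,t')$ and also $J=(s',s_0]\oplus(s_0,t_0)\oplus[t_0,t')$. In a continuous weak flow $\phi_{\{u\}}=\id$ for all $u$, so each flanking interval above equals the corresponding open interval, and each of those is either empty (when its endpoint is exactly $s_0$ or $t_0$) or shrinks to a point as $(s,t)\to(s_0,t_0)$, in which case its $\phi$ tends to $\id$ by the continuity of $\phi$. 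The sandwich estimate applied to the two decompositions of $J$, together with the triangle inequality, gives $\phi_{ts}\to\phi_{t_0s_0}$. The case $s_0=t_0$ is handled the same way, extracting $\phi_{(s,t)}$ from a decomposition of $\phi_{(s,s_0)}$ or of $\phi_{(s_0,t)}$, both of which tend to $\id$.

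For the second assertion, let $I_n\to I$ in $D^\circ(\R,\cD)$, and write $a=\inf I$, $b=\sup I$. Unwinding the definition of $I_n\to I$ shows $\inf I_n\to a$, $\sup I_n\to b$, and that for large $n$ the endpoint-type of $I_n$ at $a$ (and at $b$) matches that of $I$; in particular no leftover piece below contains $a$ or $b$ in its interior. Put $M_n=I_n\cap I$ and write $I_n=A_n\oplus M_n\oplus B_n$, $I=A'_n\oplus M_n\oplus B'_n$; the leftover pieces $A_n,A'_n,B_n,B'_n$ are each either empty, or an open interval shrinking onto $\{a\}$ or $\{b\}$, or a half-open interval with \emph{closed} endpoint $c_n\to a$ or $c_n\to b$. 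By the sandwich estimate around $M_n$ (once for $I_n$, once for $I$) and the triangle inequality, it suffices to show that $\phi$ over each leftover piece tends to $\id$. For the open pieces this is precisely the cadlag property of $\phi$. For a half-open piece, write it as $\{c_n\}\oplus J_n$ with $J_n$ an open interval shrinking onto $\{a\}$ (or $\{b\}$), so $\phi_{J_n}\to\id$; the sandwich estimate then reduces the task to the lemma: \emph{for a cadlag weak flow, $\phi_{\{t\}}\to\id$ as $t\to a$ with $t\ne a$} --- i.e., the jumps of $\phi$ decay near every point.

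This lemma is where I expect the real difficulty to lie. For $t\da a$ I would argue as follows: the sandwich estimate with $I=\{u\}$, $A=(a,u)$, $B=\es$ gives $d_\cD(\phi_{\{u\}},\phi_{(a,u]})\le\|\phi_{(a,u)}-\id\|$, which tends to $0$ as $u\da a$ by the cadlag property of $\phi$, so $\phi_{\{u\}}\to\id$ will follow once $\phi_{(a,u]}\to\id$ as $u\da a$. But the $\cD$-valued map $u\mapsto\phi_{(a,u]}$ is cadlag, with left limits $u\mapsto\phi_{(a,u)}$, and $\phi_{(a,u)}\to\id$ as $u\da a$; since for a cadlag map the convergence of its left limits as one approaches an endpoint forces the convergence of the map itself there (so that its jumps vanish), $\phi_{(a,u]}$ inherits the limit $\id$. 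Establishing the cadlag property of $u\mapsto\phi_{(a,u]}$ directly from \eqref{WF} without circularity, and carrying out the endpoint bookkeeping used in the decompositions above, are the parts that need care; the rest is just the manipulation of \eqref{WF} recorded in the sandwich estimate.
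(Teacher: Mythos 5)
Your proof follows essentially the paper's route. The ``sandwich estimate'' is precisely the bound the paper extracts from \eqref{WF} (stated there as $d_\cD(\phi_I,\phi_{I_n})\le\|\phi_{J_n}-\id\|+\|\phi_{J_n'}-\id\|$ for $I\triangle I_n=J_n\cup J_n'$); the endpoint bookkeeping reduces, exactly as in the paper, to the cadlag property together with a half-open variant of it (the paper's \eqref{WFP2}); and the lemma you isolate, that $\phi_{\{t\}}\to\id$ as $t\to a$ with $t\ne a$, is equivalent to \eqref{WFP2}, which the paper also states without a written-out proof. Your handling of the first assertion directly, rather than by reduction to the second as in the paper, is a minor and perfectly acceptable variation.

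There is, however, a genuine gap in your suggested route to that lemma, and you yourself half-flag it. To prove right-continuity of $u\mapsto\phi_{(a,u]}$ at $u$ you need $\phi_{(a,u']}\to\phi_{(a,u]}$ as $u'\da u$, which by \eqref{WF} reduces to $\|\phi_{(u,u']}-\id\|\to0$; but the cadlag hypothesis only gives $\|\phi_{(u,u')}-\id\|\to0$, and the two differ by exactly the single-point piece $\phi_{\{u'\}}$ that the lemma is meant to control. The argument is therefore circular as it stands.

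The lemma can be proved non-circularly from cadlag and \eqref{WF} alone, which is presumably what the paper intends by ``combining the cadlag and weak flow properties''. For $s'\ua t$, take $s<s'<t$ and write $(s,t)=(s,s')\oplus\{s'\}\oplus(s',t)$. Setting $\eta_1=\|\phi_{(s,t)}-\id\|$, $\eta_2=\|\phi_{(s',t)}-\id\|$, $\eta_3=\|\phi_{(s,s')}-\id\|$, the two chains of inequalities in \eqref{WF}, together with monotonicity and the bounds $|\phi^\pm_J(x)-x|\le\|\phi_J-\id\|$, yield
$$
\phi^-_{\{s'\}}(x)\le x+\eta_1+\eta_2+\eta_3,\qquad \phi^+_{\{s'\}}(x)\ge x-(\eta_1+\eta_2+\eta_3)
$$
for all $x$, and hence, taking one-sided limits, $\|\phi_{\{s'\}}-\id\|\le\eta_1+\eta_2+\eta_3$. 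Given $\ve>0$, cadlag at $t$ supplies $t_0<t$ with $\|\phi_{(r,t)}-\id\|<\ve$ for all $r\in(t_0,t)$; then for $s'\in(t_0,t)$ both $\eta_1,\eta_2<\ve$ when $s\in(t_0,s')$, and cadlag at $s'$ supplies $s_0<s'$ with $\eta_3<\ve$ for $s\in(s_0,s')$; choosing $s\in(s_0\vee t_0,s')$ gives $\|\phi_{\{s'\}}-\id\|<3\ve$, so $\phi_{\{s'\}}\to\id$ as $s'\ua t$. The case $u\da t$ is symmetric. With this lemma in place, the rest of your argument goes through.
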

\begin{proof}
The first assertion follows from the second: given $\phi\in C^\circ(\R,\cD)$ and sequences $s_n\to s$ and $t_n\to t$, then,
passing to a subsequence if necessary, we can assume that $(s_n,t_n]\to I$ for some interval $I$ with $\inf I=s$ and $\sup I=t$.
Then, by the second assertion, we have $\phi_{t_ns_n}\to\phi_I=\phi_{ts}$, as required.

So, let us fix $\phi\in D^\circ(\R,\cD)$ and a sequence of finite intervals $I_n\to I$.
By combining the cadlag and weak flow properties, we can show the following variant of the cadlag property: for all $t\in\R$, we have 
\begin{equation}\label{WFP2}
\phi_{[s,t)}\to\id\q\text{as $s\uparrow t$},\q
\phi_{(t,u]}\to\id\q\text{as $u\downarrow t$}.
\end{equation}
For each $n$, there exist two disjoint intervals $J_n$ and $J_n'$, possibly empty,
such that $I\triangle I_n=J_n\cup J_n'$. For any such $J_n$ and $J_n'$, using the weak flow property, we obtain
$$
d_\cD(\phi_I,\phi_{I_n})\le\|\phi_{J_n}-\id\|+\|\phi_{J_n'}-\id\|.
$$
Set $s=\inf I$, $s_n=\inf I_n$, $t=\sup I$ and $t_n=\sup I_n$.
Then $s_n\to s$, $t_n\to t$, and
$$
\text{ if $s\in I$ then $s\in I_n$ eventually},\q
\text{ if $s\not\in I$ then $s\not\in I_n$ eventually},
$$$$
\text{ if $t\in I$ then $t\in I_n$ eventually},\q
\text{ if $t\not\in I$ then $t\not\in I_n$ eventually}.
$$
Hence, using the cadlag property or (\ref{WFP2}), or both, we find that
$\phi_{J_n}\to\id$ and $\phi_{J_n'}\to\id$, which proves the proposition.
\end{proof}

\begin{proposition}
The metrics $d_C$ and $d_D$ generate the same topology on $C^\circ(\R,\cD)$.
\end{proposition}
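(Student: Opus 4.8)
The plan is to show that $d_C$ and $d_D$ admit the same convergent sequences; since both are genuine metrics on the single set $C^\circ(\R,\cD)$, this is equivalent to their generating the same topology. So I would fix $\phi_n,\phi\in C^\circ(\R,\cD)$ and prove separately that $d_C(\phi_n,\phi)\to0$ implies $d_D(\phi_n,\phi)\to0$, and conversely. Throughout I would use freely that for $\psi\in C^\circ(\R,\cD)$ the map $\psi_I$ depends only on the endpoints of $I$, with $\psi_I=\psi_{ts}$ where $s=\inf I,\ t=\sup I$, and that $d_\cD(f,g)=\|f^\times-g^\times\|$.

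For the easy implication I would bound $d_D$ above by $d_C$. Taking $\lambda=\id$ in the infimum defining $d_D^{(n)}$ gives $\gamma(\id)=0$, and since $\chi_n(I)\le1$ with $\chi_n(I)=0$ unless $R=\sup I\vee(-\inf I)<n+1$, one gets $d_D^{(n)}(\phi,\psi)\le\sup_{I:\,R(I)<n+1}\chi_n(I)\,d_\cD(\phi_I,\psi_I)\le\sup_{s,t\in(-(n+1),n+1),\,s<t}d_\cD(\phi_{ts},\psi_{ts})=d_C^{(n+1)}(\phi,\psi)$. Summing the geometric weights yields $d_D\le2d_C$ on $C^\circ(\R,\cD)$, which settles this direction.

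For the converse, suppose $d_D(\phi_n,\phi)\to0$ and fix $N\ge1$; the goal is $d_C^{(N)}(\phi_n,\phi)\to0$. Since $d_D^{(N+1)}(\phi_n,\phi)\to0$, I would choose for each $n$ an increasing homeomorphism $\lambda_n$ of $\R$ with $\gamma(\lambda_n)$ and $\sup_I\|\chi_{N+1}(I)\phi_{n,I}^\times-\chi_{N+1}(\lambda_n(I))\phi_{\lambda_n(I)}^\times\|$ both at most $\epsilon_n:=d_D^{(N+1)}(\phi_n,\phi)+1/n$, so $\epsilon_n\to0$. Now fix an interval $I$ with endpoints $s,t\in(-N,N)$. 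Once $\gamma(\lambda_n)<1$ one has $R(I)<N$ and $R(\lambda_n(I))<N+1$, hence $\chi_{N+1}(I)=\chi_{N+1}(\lambda_n(I))=1$ and $I\cup\lambda_n(I)\sse[-(N+1),N+1]$; by the identity recorded in the cutoff footnote this forces $d_\cD(\phi_{n,ts},\phi_{\lambda_n(t)\lambda_n(s)})\le\epsilon_n$. It then remains to replace $\phi_{\lambda_n(t)\lambda_n(s)}$ by $\phi_{ts}$: by Proposition \ref{PHICONT} the evaluation map $(s,t)\mapsto\phi_{ts}$ is continuous on $\{s\le t\}$, hence uniformly continuous on the compact set $\{-(N+1)\le s\le t\le N+1\}$, and $\sup_t|\lambda_n(t)-t|\le\gamma(\lambda_n)\to0$, so $\sup_{s,t\in(-N,N),\,s<t}d_\cD(\phi_{\lambda_n(t)\lambda_n(s)},\phi_{ts})\to0$. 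The triangle inequality then gives $d_C^{(N)}(\phi_n,\phi)\le\epsilon_n+\sup_{s,t\in(-N,N),\,s<t}d_\cD(\phi_{\lambda_n(t)\lambda_n(s)},\phi_{ts})$, and both terms tend to $0$; summing over $N$ with the geometric weights gives $d_C(\phi_n,\phi)\to0$.

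The only delicate point is the bookkeeping near the edge of the time window: one must be sure that, eventually in $n$, the cutoff functions $\chi_{N+1}$ are identically $1$ on the relevant intervals and that $\lambda_n(I)$ does not escape the slightly enlarged window $[-(N+1),N+1]$. This is precisely the ``messiness at the edges'' flagged in the footnotes, and it is why I would invoke $d_D^{(N+1)}$, rather than $d_D^{(N)}$, to control $d_C^{(N)}$. Everything else is the elementary identity $d_\cD(f,g)=\|f^\times-g^\times\|$ together with the uniform continuity of the evaluation map for the limit flow.
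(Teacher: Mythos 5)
Your proof is correct and takes essentially the same route as the paper: the easy direction via the choice $\lambda=\id$, and the harder direction by using uniform continuity of $(s,t)\mapsto\phi_{ts}$ from Proposition \ref{PHICONT} together with $d_D^{(n+1)}$ to control $d_C^{(n)}$, exactly as the paper does. The only cosmetic differences are that you phrase the argument via sequential convergence rather than the paper's explicit $\ve$--$\ve'$ estimate, and you record the index-shifted inequality $d_D\le 2d_C$ where the paper asserts $d_D\le d_C$; both are ample for topological equivalence.
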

\begin{proof}
On comparing the definitions of $d^C_n$ and $d^D_n$ for each $n\in\N$, and considering the choice $\l=\id$,
we see that $d_D\le d_C$. Hence, it will suffice to show, given $\phi\in C^\circ(\R,\cD)$, $n\in\N$ and $\ve>0$,
that there exists $\ve'>0$ such that, for all $\psi\in C^\circ(\R,\cD)$, we have $d_C^{(n)}(\phi,\psi)<\ve$
whenever $d_{n+1}^D(\phi,\psi)<\ve'$. By the preceding proposition, there exists a $\d\in(0,1]$ such that
$d_\cD(\phi_{ts},\phi_{t's'})<\ve/2$ whenever $|s-s'|,|t-t'|\le\d$ and $s,t\in(-n,n)$. Set $\ve'=\d\wedge(\ve/2)$
and suppose that $d_{n+1}^D(\phi,\psi)<\ve'$. Then there exists an increasing homeomorphism $\l$ of $\R$, with
$|\l(t)-t|\le\d$ for all $t$, such that, for all intervals $I$, we have 
$\|\chi_{n+1}(I)\psi_I^\times-\chi_{n+1}(\l(I))\phi_{\l(I)}^\times\|<\ve/2$.
Given $s,t\in(-n,n)$ with $s<t$, take $I=(s,t]$.
Then $\chi_{n+1}(I)=\chi_{n+1}(\l(I))=1$, so $d_\cD(\phi_{\l(t)\l(s)},\psi_{ts})=\|\psi_I^\times-\phi_{\l(I)}^\times\|<\ve/2$.
But then, for all such $s,t$, we have 
$$
d_\cD(\phi_{ts},\psi_{ts})\le d_\cD(\phi_{ts},\phi_{\l(t)\l(s)})+d_\cD(\phi_{\l(t)\l(s)},\psi_{ts})<\ve,
$$
so $d_C^{(n)}(\phi,\psi)<\ve$, as required.
\end{proof}

\begin{proposition}
The metric spaces
$(C^\circ(\R,\cD),d_C)$ and $(D^\circ(\R,\cD),d_D)$ are complete and separable.
\end{proposition}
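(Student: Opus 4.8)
The plan is to handle the two spaces separately. Completeness of $(C^\circ(\R,\cD),d_C)$ is elementary, since $d_C$ involves no reparametrization of time, so I would do it directly. Let $(\phi_n)$ be $d_C$-Cauchy. Unwinding the definition, for each $N\in\N$ one has $\sup\{d_\cD(\phi_{n,ts},\phi_{m,ts}):s,t\in(-N,N),\ s<t\}\to0$ as $m,n\to\infty$, so, since $(\cD,d_\cD)$ is complete, $\phi_{n,ts}$ converges for every $s<t$ to some $\psi_{ts}\in\cD$, uniformly for $s,t$ in each bounded region. The weak-flow inequalities for $\psi$ are inherited from those of the $\phi_n$ by Proposition \ref{WFL} (for the right-continuous inequality) and its left-continuous counterpart. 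Continuity of $\psi$ follows from that of the $\phi_n$ together with the uniformity of the convergence in time: given $t$ and $\ve>0$, pick $n$ with $d_\cD(\phi_{n,t's'},\psi_{t's'})<\ve/2$ for $s'<t'$ near $t$; since $\phi_n$ is a continuous weak flow, $d_\cD(\phi_{n,ts},\id)<\ve/2$ for $s$ close enough below $t$, whence $d_\cD(\psi_{ts},\id)<\ve$, and likewise as $u\da t$. Thus $\psi\in C^\circ(\R,\cD)$ and $\phi_n\to\psi$ for $d_C$.

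For completeness of $(D^\circ(\R,\cD),d_D)$ I would follow the standard proof for Skorokhod spaces (cf. \cite{B}), with $\cD$ in place of $\R$ and composition in place of addition. From a $d_D$-Cauchy sequence pass to a subsequence $(\phi_{n_k})$ with $d_D(\phi_{n_k},\phi_{n_{k+1}})<2^{-k}$, and for each $k$ choose an increasing homeomorphism $\l_k$ of $\R$ with $\g(\l_k)$ correspondingly small that nearly realizes the relevant distances $d_D^{(N)}(\phi_{n_k},\phi_{n_{k+1}})$. The bounds on $\g(\l_k)$ force the compositions $\l_k,\ \l_k\circ\l_{k+1},\dots$ to converge uniformly on compacts to homeomorphisms $\mu_k$, with summable error estimates; one then checks that the time-changed flows converge, interval by interval in $\cD$, to a family $\psi=(\psi_I)$, that $\psi$ satisfies the weak-flow inequalities (again by Proposition \ref{WFL}), and that $\psi$ is cadlag, using (\ref{PHID}) and the $\g$-bounds to control increments over vanishing intervals. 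Finally $d_D(\phi_n,\psi)\to0$ along the full sequence because a Cauchy sequence with a convergent subsequence converges.

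For separability it suffices to treat $D^\circ(\R,\cD)$: since $C^\circ(\R,\cD)\subseteq D^\circ(\R,\cD)$ and $d_C$, $d_D$ induce the same topology on $C^\circ(\R,\cD)$, separability of $C^\circ(\R,\cD)$ follows because a subspace of a separable metric space is separable. Fix a countable dense set $\cD_0\subseteq\cD$ of degree-$1$ homeomorphisms (so that compositions from $\cD_0$ remain in $\cD$), and consider the countable family of ``atomic'' cadlag weak flows which, on each window $[-N,N]$, are obtained by composing finitely many members of $\cD_0$ at finitely many rational times. To see these are $d_D$-dense, take $\phi\in D^\circ(\R,\cD)$, $N$ and $\ve>0$; the key input is a modulus estimate saying that, outside a finite set of times $t_1<\dots<t_m$ in $[-N-1,N+1]$, the increments $\phi_I$ over short intervals $I$ are uniformly $d_\cD$-close to $\id$, while at each $t_j$ the jump $\phi_{\{t_j\}}$ is a fixed element of $\cD$. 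Moving the $t_j$ to rational times by a homeomorphism $\l$ with $\g(\l)$ small, and approximating $\phi_{\{t_j\}}$ and the small between-jump increments by members of $\cD_0$, yields an atomic flow within $\ve$ of $\phi$ for $d_D^{(N)}$; a diagonal argument over $N$ then finishes the proof.

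The main obstacle is the modulus estimate just invoked, together with the analogous uniform control used in the completeness argument for $D^\circ(\R,\cD)$: one must show that a cadlag weak flow has, on each bounded time window, only finitely many ``large'' jumps $\phi_{\{t\}}$ and oscillates little in the $d_\cD$-metric between them. This is the analogue of the classical fact $w'_f(\delta)\to0$ for cadlag real functions, and it must be re-derived here for flows valued in the non-linear space $\cD$; the natural route is to upgrade the defining cadlag property to its two-sided form (\ref{WFP2}) using the weak-flow inequalities (\ref{WF}), and to exploit that each evaluation $t\mapsto\phi_{(s,t]}^+(x)$ is a genuine cadlag real function. Once this is in place, the remaining points — convergence of the infinite reparametrization composition, and the facts that limits of weak flows are weak flows (Proposition \ref{WFL}) and limits of cadlag flows are cadlag — are routine.
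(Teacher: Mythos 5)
Your completeness plan is sound, and in fact a little cleaner than the paper's in one respect: you prove completeness of $(C^\circ(\R,\cD),d_C)$ directly, whereas the paper first establishes completeness of $D^\circ(\R,\cD)$ under $d_D$ and then observes $C^\circ(\R,\cD)$ is a closed subspace, relying on the (separately proved) topological equivalence of $d_C$ and $d_D$ there. Your $D^\circ$ argument follows the paper's Skorokhod-style route (subsequence, reparametrizations $\mu_n$ with small $\gamma$, convergent compositions $\l_n$, Proposition \ref{WFL} for the weak-flow property of the limit, the two-sided cadlag variant for its regularity), and you correctly identify that the heavy lifting there is the uniform modulus control afforded by the cadlag property plus compactness.

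The separability plan, however, has a genuine gap, and it is precisely where you call the remaining work ``routine.'' Your proposed countable dense set consists of ``atomic'' flows: finitely many rational jump times carrying atoms $g_j\in\cD_0$ (homeomorphisms), with $\id$ elsewhere, so that $\psi_{(t_k,t_m]}$ is \emph{forced} to equal the composition $g_m\circ\cdots\circ g_{k+1}$. But a general $\phi\in D^\circ(\R,\cD)$ satisfies the flow property only as the pair of one-sided inequalities \eqref{WF}, and when some increment sends an interval of positive length onto a discontinuity of the next, the inequality is genuinely strict: $\phi_{(s_k,s_m]}$ is not determined by the atoms $\phi_{\{s_j\}}$, and on the affected intervals it may be \emph{any} monotone function within the two-sided sandwich. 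Consequently, approximating the atoms in $d_\cD$ and composing does not a priori approximate the longer-interval increments; one is trying to match the whole family $\{\phi_{(s_k,s_m]}:0\le k<m\le n\}$ with only $n$ free choices of atom, the cross-constraints $\phi_{(s_k,s_m]}\approx g_m\circ\cdots\circ g_{k+1}$ are not implied by the weak-flow inequalities, and composition is not jointly continuous for $d_\cD$ near discontinuous limits. The paper avoids this entirely: it does \emph{not} build $\psi_{(t_k,t_m]}$ by composing atoms, but instead defines each $\psi_{(t_k,t_m]}$ separately as a spatial discretization (conjugation by dyadic step maps $\d^\pm$ and small translations $\tau_k$) of the corresponding $\phi_{(s_k,s_m]}$, then checks directly that the resulting family satisfies the weak-flow inequalities. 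That construction preserves the ``slack'' in $\phi$ that an atomic/true flow cannot represent, and it is the step your sketch is missing. Either replace the atomic flows by weak flows of this more general type, or supply an argument that true flows are dense in $D^\circ(\R,\cD)$ — the latter is not obvious and is not addressed in your proposal.
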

\begin{proof}
The argument for completeness is a variant of the corresponding
argument for the usual Skorokhod space $D(\R, S)$ of cadlag paths in complete separable metric
space $S$, as found for example in \cite{B}.
Suppose then that $(\psi^n)_{n \geq 1}$ is a Cauchy sequence in $D^\circ(\R,\cD)$.
There exists a subsequence $\phi^k=\psi^{n_k}$ such that $d_D^{(n)}(\phi^n, \phi^{n+1}) < 2^{-n}$ for all $n\ge1$.
It will suffice to find a limit in $D^\circ(\R,\cD)$ for $(\phi^n)_{n \geq 1}$.
There exist increasing homeomorphisms $\mu_n$ of $\R$
for which $\gamma(\mu_n) < 2^{-n}$ and
$$
d_\cD(\phi^n_I, \phi^{n+1}_{\mu_n(I)})<2^{-n},\q I\cup\mu_n(I)\sse(-n,n).
$$
For each $n\ge1$, the sequence $(\mu_{n+m} \circ \cdots \circ \mu_{n})_{m\ge1}$ converges
uniformly on $\R$ to an increasing homeomorphism, $\lambda_n$ say,
with $\gamma(\lambda_n) < 2^{-n+1}$. Then $\mu_n\circ\l_n^{-1}=\l_{n+1}^{-1}$, so
$$
d_\cD(\phi^n_{\lambda_n^{-1}(I)}, \phi^{n+1}_{\lambda_{n+1}^{-1}(I)}) < 2^{-n},\q
I\sse(-n+1,n-1).
$$
So, for all $m\ge n$,
\begin{equation}\label{PHMN}
d_\cD(\phi^n_{\lambda_n^{-1}(I)}, \phi^{n+m}_{\lambda_{n+m}^{-1}(I)}) < 2^{-n+1},\q I\sse(-n+1,n-1).
\end{equation}
Hence, for all finite intervals $I\sse\R$,
$(\phi^n_{\lambda_n^{-1}(I)})_{n \geq 1}$ is a
Cauchy sequence in $\cD$, which, since $\cD$ is complete, has a limit $\phi_I\in\cD$.
On letting $m\to\infty$ in (\ref{PHMN}), we obtain
$$
d_\cD(\phi^n_{\lambda_n^{-1}(I)},\phi_I) < 2^{-n+1},\q I\sse(-n+1,n-1).
$$
By Proposition \ref{WFL}, $\phi=(\phi_I:I\sse\R)$ has the weak flow property.
To see that $\phi$ is cadlag, suppose given $\ve>0$ and $t\in\R$. Choose $n$ such that
$2^{-n+1}\le\ve/3$ and $|t|\le n-2$. Then choose $\d\in(0,1]$ such that
$$
d_\cD(\phi^n_{\l_n^{-1}(s,t)},\id)<\ve/3,\q d_\cD(\phi^n_{\l_n^{-1}(t,u)},\id)<\ve/3
$$
whenever $s\in(t-\d,t)$ and $u\in(t,t+\d)$. For such $s$ and $u$, we then have
$$
d_\cD(\phi_{(s,t)},\id)<\ve,\q d_\cD(\phi_{(t,u)},\id)<\ve.
$$
Hence $\phi\in D^\circ(\R,\cD)$. For $m\le n-3$, we have
\begin{align*}
d_D^{(m)}(\phi^n,\phi)
&\le\g(\l_n)\vee\sup_{I\sse(-m-2,m+2)}\|\chi_m(\l_n^{-1}(I))\phi_{\l_n^{-1}(I)}^{n\times}-\chi_m(I)\phi_I^\times\|\\
&\le\g(\l_n)\vee\sup_{I\sse(-m-2,m+2)}\left\{d_\cD(\phi^n_{\lambda_n^{-1}(I)},\phi_I)+\g(\l_n)\|\phi_I^\times\|\right\}\\
&\le2^{-n+1}(1+\sup_{I\sse(-m-2,m+2)}\|\phi_I^\times\|).
\end{align*} 
Hence $d_D(\phi^n,\phi)\to0$ as $n\to\infty$.
We have shown that $D^\circ(\R,\cD)$ is complete.
If the sequence $(\phi^n)_{n \geq 1}$ in fact lies in $C^\circ(\R,\cD)$, then,
by an obvious variation of the argument for the cadlag property, the limit
$\phi$ also lies in $C^\circ(\R,\cD)$. Hence $C^\circ(\R,\cD)$ is also complete.
In particular, $C^\circ(\R,\cD)$ is a closed subspace in $D^\circ(\R,\cD)$.

We turn to the question of separability.
Let us write $D_N$ for the set of those $\phi\in D^\circ(\R,\cD)$ such that:
\begin{itemize}
\item[(i)] for some $n\in\N$ and some rationals $t_1<\dots<t_n$, we have
$\phi_J=\id$ for all time intervals $J$ which do not intersect the set $\{t_1,\dots,t_n\}$;
\item[(ii)] for all other time intervals $I$, the maps $\phi_I$ and $\phi_I^{-1}$ on $\R$
are constant on all space intervals which do not intersect $2^{-N}\Z$.
\end{itemize}
Note that each $\phi\in D_N$ is determined by the maps $\phi_{(t_k,t_m]}$,
for integers $0\le k<m\le n$, where $t_0<t_1$,
and for each of these maps there are only countably many possibilities (finitely many
if we insist that $\phi(0)\in[0,1)$).
Hence $D_N$ is countable and so is $D_*=\bigcup_{N\ge1}D_N$.
We shall show that $D_*$ is also dense in $D^\circ(\R,\cD)$.

Fix $\phi\in D^\circ(\R,\cD)$ and $n_0\ge1$. It will suffice to find, for a given $\ve>0$,
a $\psi\in D_*$ with $d^{(n_0)}_D(\phi,\psi)<\ve$. By the cadlag property and compactness,
there exist $n\in\N$ and reals $s_1<\dots<s_n$ in $I_0=(-n_0-1,n_0+1)$ such that
$d_\cD(\phi_I,\id)<\ve/4$ for every subinterval $I$ of $I_0$ which does not intersect $\{s_1,\dots,s_n\}$.
Then we can find rationals $t_1<\dots<t_n$ in $I_0$ and
an increasing homeomorphism $\l$ of $\R$, with $\l(t)=t$ for $t\not\in I_0$, with $\g(\l)\sup_{I\sse I_0}\|\phi^\times_I\|<\ve/4$,
and such that $\l(t_m)=s_m$ for all $m$.
Set $s_0=t_0=-n_0-1$.

For $f\in\cD$, write $\Delta(f)$ for the set of points where $f$ is not continuous.
Define, for $m=0,1,\dots,n$,
$$
\Delta_m=\bigcup_{k=0}^{m-1}\Delta(\phi_{(s_k,s_m]}^{-1})\cup\bigcup_{k=m+1}^n\Delta(\phi_{(s_m,s_k]}).
$$
Then $\Delta_m$ is countable, so we can choose $N\ge1$ with $16.2^{-N}\le\ve$ and choose
$\ve_m\in\R$ with $|\ve_m|\le2^{-N}$ such that
$$
\t_m(\Delta_m)\cap2^{-N}\Z=\es,\q m=0,1,\dots,n,
$$
where $\t_m(x)=x+\ve_m$. Set
$$
\d^-(x)=2^N\lceil2^{-N}x\rceil,\q \d^+(x)=2^N\lfloor2^{-N}x\rfloor+1.
$$
Note that $\d=\{\d^-,\d^+\}\in\cD$. Define for $0\le k<m\le n$
$$
\psi_{(t_k,t_m]}^-=(\d^{-1})^-\circ(\t_m)^{-1}\circ\phi_{(s_k,s_m]}^-\circ\t_k\circ\d^-,\q
\psi_{(t_k,t_m]}^+=(\d^{-1})^+\circ(\t_m)^{-1}\circ\phi_{(s_k,s_m]}^+\circ\t_k\circ\d^+.
$$
Then $\psi_{(t_k,t_m]}=\{\psi_{(t_k,t_m]}^-,\psi_{(t_k,t_m]}^+\}\in\cD$
by our choice of $\ve_k$ and $\ve_m$.
Moreover $\d^+\circ(\d^{-1})^+\ge\id$ and $\d^-\circ(\d^{-1})^-\le\id$ so,
for $0\le m<m'<m''\le n$, we obtain the inequalities
$$
\psi_{(t_{m'},t_{m''}]}^-\circ\psi_{(t_m,t_{m'}]}^-\le\psi_{(t_m,t_{m''}]}^-
\le\psi_{(t_m,t_{m''}]}^+\le\psi_{(t_{m'},t_{m''}]}^+\circ\psi_{(t_m,t_{m'}]}^+
$$
from the corresponding inequalities for $\phi$.
We use the equations $\|\d-\id\|=2^{-N}$ and $\|\t_m-\id\|=|\ve_m|$
to see that
$$
d_\cD(\phi_{(s_k,s_m]},\psi_{(t_k,t_m]})\le4.2^{-N},\q 0\le k<m\le n.
$$
Define $\psi_J=\psi_{(t_k,t_m]}$ for all intervals $J$ such that
$J\cap\{t_1,\dots,t_n\}=\{t_{k+1},\dots,t_m\}$.
For such intervals $J$, with $J\sse I_0$, we have
$d_\cD(\phi_{(s_k,s_m]\sm\l(J)},\id)<\ve/4$ and $d_\cD(\phi_{\l(J)\sm(s_k,s_m]},\id)<\ve/4$ so,
using the weak flow property for $\phi$,
$$
d_\cD(\psi_J, \phi_{\l(J)})\le d_\cD(\psi_{(t_k,t_m]},\phi_{(s_k,s_m]})
+d_\cD(\phi_{(s_k,s_m]},\phi_{\l(J)})\le 4.2^{-N}+2\ve/4<3\ve/4.
$$
Define $\psi_J=\id$ for all intervals $J$ which do not intersect $\{t_1,\dots,t_n\}$.
For such intervals $J$ with $J\sse I_0$, we have $d_\cD(\psi_J, \phi_{\l(J)})\le d_\cD(\id,\phi_{\l(J)})\le\ve/4$.
Now $\psi\in D_N$ and 
$$
d^{(n_0)}_D(\phi,\psi)\le\g(\l)\vee\sup_{J\sse I_0}\{d_\cD(\psi_J,\phi_{\l(J)})+\g(\l)\|\phi_J^\times\|\}<\ve, 
$$
as required.
This proves that $D^\circ(\R,\cD)$ is separable and, since $C^\circ(\R,\cD)$
is a closed subspace of $D^\circ(\R,\cD)$, it follows that $C^\circ(\R,\cD)$
is also separable.
\end{proof}

\begin{proposition}
For all $s,t\in\R$ with $s<t$, and all $x\in\R$, the map
$\phi\mapsto\phi_{ts}^+(x)$ on $C^\circ(\R,\cD)$ is continuous.
Moreover the Borel $\s$-algebra on $C^\circ(\R,\cD)$ is generated by
the set of all such maps with $s,t$ and $x$ rational.

For all finite intervals $I\sse\R$ and all $x\in\R$, the 
map $\phi\mapsto\phi_I^+(x)$ on $D^\circ(\R,\cD)$ is Borel 
measurable, and the Borel $\s$-algebra on $D^\circ(\R,\cD)$ is generated by
the set of all such maps where $I=(s,t]$ with $s,t$ and $x$ rational.
\end{proposition}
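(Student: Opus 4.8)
The plan is to establish, in order: (a) Borel measurability of each evaluation map — with the continuity assertion on $C^\circ(\R,\cD)$ emerging as a by-product; (b) that the evaluation maps with rational parameters separate points; and (c) that, since the spaces are complete and separable, (a) and (b) force the Borel $\sigma$-algebra to coincide with the one generated by the rational evaluation maps.

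For (a) on $C^\circ(\R,\cD)$, fix $s<t$ and $x$ and let $\phi^n\to\phi$, so that $\ve_n:=d_\cD(\phi_{ts},\phi^n_{ts})\to0$. The characterization of $d_\cD$ proved above gives
$$
\phi_{ts}^-(x-\ve_n)-\ve_n\le(\phi^n_{ts})^-(x)\le(\phi^n_{ts})^+(x)\le\phi_{ts}^+(x+\ve_n)+\ve_n;
$$
letting $n\to\infty$ and using that $\phi_{ts}^+$ is right-continuous and $\phi_{ts}^-$ left-continuous, one obtains $\phi_{ts}^-(x)\le\liminf_n(\phi^n_{ts})^+(x)\le\limsup_n(\phi^n_{ts})^+(x)\le\phi_{ts}^+(x)$. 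Hence $\phi\mapsto\phi_{ts}^+(x)$ is upper semicontinuous and $\phi\mapsto\phi_{ts}^-(x)$ lower semicontinuous; in particular both are Borel, and $\phi\mapsto\phi_{ts}^+(x)$ is continuous at every $\phi$ for which $x$ is a continuity point of $\phi_{ts}$. On $D^\circ(\R,\cD)$ the analogous map $\phi\mapsto\phi_{(s,t]}^+(x)$ is genuinely not continuous — both spatial discontinuities of $\phi_{(s,t]}$ and temporal jumps of $\phi$ intervene — so here I would argue as in the classical Skorokhod setting \cite{B}: since $u\mapsto\phi_{(s,u]}^+(x)$ is cadlag (established above), its value at $t$ is the pointwise limit as $h\downarrow0$ of the time-averages $h^{-1}\int_t^{t+h}\phi_{(s,u]}^+(x)\,du$, and these — suitably regularized, exploiting the bi-Lipschitz control $\gamma(\l)$ on the time-changes appearing in $d_D$ — are Borel in $\phi$; so $\phi\mapsto\phi_{(s,t]}^+(x)$ is Borel. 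A general finite interval $I$ is reduced to half-open ones by writing $\phi_I^+(x)=\inf_{y\in\Q,\,y>x}\phi_I^+(y)$ and approximating $I$ by rational-endpoint half-open intervals via \eqref{PHID}; equivalently, this is the measurability of $Z^{e,+}:D^\circ(\R,\cD)\to D_e$ asserted in Section \ref{SKOR}.

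For (b): if $\phi\ne\psi$ in $C^\circ(\R,\cD)$ then $\phi_{ts}\ne\psi_{ts}$ for some $s<t$, which may be taken rational by the continuity of $(s,t)\mapsto\phi_{ts}$ (Proposition \ref{PHICONT}); then the right-continuous functions $\phi_{ts}^+$ and $\psi_{ts}^+$ disagree, hence disagree at a rational $x$. For $D^\circ(\R,\cD)$ one argues identically once one notes, by \eqref{PHID}, that every finite interval is a limit $I_n\to I$ of rational-endpoint half-open intervals, so that $\phi_{(s,t]}=\psi_{(s,t]}$ for all rational $s<t$ forces $\phi=\psi$.

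For (c), let $\mathcal E$ denote the $\sigma$-algebra generated by the rational evaluation maps; step (a) gives $\mathcal E\subseteq\mathcal B$, the Borel $\sigma$-algebra. Assembling the countably many rational evaluation maps into a map $e$ into $\R^{\N}$, which is Borel by (a) and injective by (b), and using that $C^\circ(\R,\cD)$ and $D^\circ(\R,\cD)$ are Polish, the Lusin--Souslin theorem shows $e$ is a Borel isomorphism onto a Borel subset of $\R^{\N}$; hence $\mathcal B=e^{-1}(\mathcal B(\R^{\N}))=\mathcal E$. (Alternatively, one checks directly that $\phi\mapsto d_C(\phi,\psi)$, resp.\ $\phi\mapsto d_D(\phi,\psi)$, is $\mathcal E$-measurable for each $\psi$ in a countable dense set and invokes separability; for $d_D$ this requires the infimum over increasing homeomorphisms and the supremum over intervals in the definition of $d_D^{(n)}$ to be reduced to countable families — piecewise-linear $\l$ with rational data, rational-endpoint intervals — without altering the value.) I expect the main obstacle to be step (a) for $D^\circ(\R,\cD)$: making the time-averaging argument honest, i.e.\ controlling the effect of the Skorokhod time-changes so that the approximating functionals are genuinely continuous (or at worst Borel) in $d_D$ despite temporal jumps of $\phi$; everything else is a routine adaptation of standard Skorokhod-space and descriptive-set-theoretic arguments.
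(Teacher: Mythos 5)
Your route diverges from the paper's in two places, and you have correctly located the real difficulty yourself. For Borel measurability of $\phi\mapsto\phi_{(s,t]}^+(x)$ on $D^\circ(\R,\cD)$, the paper writes the evaluation as $\phi_{(s,t]}^+(x)=\lim_m\lim_n\frac{1}{mn^2}F_{m,n}(\phi)$ with
$$
F_{m,n}(\phi)=\int_s^{s+1/n}\int_t^{t+1/n}\int_x^{x+1/m}\phi^+_{(s',t']}(x')\,dx'\,dt'\,ds',
$$
and proves each $F_{m,n}$ \emph{continuous} on $D^\circ(\R,\cD)$ by substituting the Skorokhod time-changes $\l_k\to\id$ into the outer two integrals, so that the averaging absorbs their effect. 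Your proposal gestures at exactly this time-averaging idea but leaves the interaction with $\l$ as a sketch; that is precisely the step the paper carries out in full, and without it your step (a) for $D^\circ(\R,\cD)$ is incomplete. The reduction from a general finite interval $I$ to $(s,t]$ you give via \eqref{PHID} matches the paper's. For the $\sigma$-algebra statement, you invoke Lusin--Souslin on the injective Borel map $e$ into $\R^\N$; the paper instead shows directly that each set $\{\phi:d_D^{(n)}(\phi,\phi^0)<r\}$ lies in $\cE$, by expressing it as $A=\bigcup_m\bigcap_k A(k,r-1/m)$ built from the maps $\phi\mapsto\phi_{I_k}^\times(z_k)$ with rational $I_k,z_k$ and proving $A=C$ by a compactness argument on the approximating homeomorphisms. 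Both are correct; yours is shorter but leans on a descriptive-set-theoretic black box, while the paper's is self-contained and explicit. Your parenthetical ``alternative'' (reducing the infimum over $\l$ and supremum over $I$ to countable families) is in spirit what the paper actually does.

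One further point: for $C^\circ(\R,\cD)$ your argument gives upper semicontinuity of $\phi\mapsto\phi_{ts}^+(x)$, lower semicontinuity of $\phi\mapsto\phi_{ts}^-(x)$, and continuity only at those $\phi$ for which $\phi_{ts}$ is spatially continuous at $x$, whereas the proposition asserts continuity outright. Your caution here is warranted. Setting $\phi^n_{ts}(y)=\phi_{ts}(y-1/n)+1/n$ gives $\phi^n\in C^\circ(\R,\cD)$ with $d_C(\phi^n,\phi)\le 1/n$, but $\phi^{n,+}_{ts}(x)\to\phi_{ts}^-(x)$, so the evaluation map is \emph{not} continuous at any $\phi$ for which $\phi_{ts}$ jumps at $x$ (and such $\phi$ exist in $C^\circ(\R,\cD)$). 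The paper omits the $C^\circ$ proof entirely, remarking only that measurability follows from the $D^\circ$ case. Your semicontinuity statement is exactly what is needed for steps (b) and (c), so the $\sigma$-algebra conclusions stand under both approaches, but the bare word ``continuous'' in the statement should be read with this caveat.
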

\begin{proof}
The results for $C^\circ(\R,\cD)$ can be proved more simply than those for $D^\circ(\R,\cD)$.
We omit details of the former, but note that these follow also from the latter, by general 
measure theoretic arguments, given what we already know about the two spaces. 

The proof for $D^\circ(\R,\cD)$ is an adaptation of the analogous result for the classical Skorokhod 
space, see for example \cite[page 335]{MR1943877}.
We prove first the Borel measurability of the evaluation maps.
Given a finite interval $I$ and $x\in\R$, we can find $s_n,t_n\in\R$ such that $(s_n,t_n]\to I$ as $n\to\infty$.
Then $\phi^+_I(x)=\lim_{m\to\infty}\lim_{n\to\infty}\phi^+_{(s_n,t_n]}(x+1/m)$, by Proposition \ref{PHICONT}.
Hence, it will suffice to consider intervals $I$ of the form $(s,t]$. Fix $s,t$ and $x$ and define for each $m,n\in\N$
a function $F_{m,n}$ on $D^\circ(\R,\cD)$ by 
$$
F_{m,n}(\phi)=\int_s^{s+1/n}\int_t^{t+1/n}\int_x^{x+1/m}\phi^+_{(s',t']}(x')dx'dt'ds'.
$$
Suppose $\phi^k\to\phi$ in $D^\circ(\R,\cD)$. We can choose increasing homeomorphisms $\l_k$ of $\R$ such that,
$\g(\l_k)\to0$ and, uniformly in $r\in[s-1,s+1]$ and $u\in[t-1,t+1]$, we have
$$
d_\cD(\phi^k_{\l_k(r,u]},\phi_{(r,u]})\to0.
$$
Define
$$
f(r,u)=\int_x^{x+1/m}\phi_{\l(r,u]}(x')dx',\q
f_k(r,u)=\int_x^{x+1/m}\phi^k_{\l_k(r,u]}(x')dx'.
$$
Then $f_k(r,u)\to f(r,u)$, uniformly in $r\in[s-1,s+1]$ and $u\in[t-1,t+1]$.
Set $\mu_k=\l_k^{-1}$. Then
\begin{align*}
F_{m,n}(\phi^k)&=\int_{\mu_k(s)}^{\mu_k(s+1/n)}\int_{\mu_k(t)}^{\mu_k(t+1/n)}f_k(r,u)d\l_k(u)d\l_k(r)\\
&\to\int_s^{s+1/n}\int_t^{t+1/n}f(r,u)dudr= F_{m,n}(\phi),
\end{align*}
so $F_{m,n}$ is continuous on $D^\circ(\R,\cD)$. By Proposition \ref{PHICONT}, we have 
$$
\phi_{(s,t]}^+(x)=\lim_{m\to\infty}\lim_{n\to\infty}\frac1{mn^2}F_{m,n}(\phi).
$$
Hence $\phi\mapsto\phi_{(s,t]}^+(x)$ is Borel measurable, as required.

Write now $\cE$ for the $\s$-algebra on $D^\circ(\R,\cD)$ generated by all maps of this form
with $s,t$ and $x$ rational. 
It remains to show that $\cE$ contains the Borel $\s$-algebra of $D^\circ(\R,\cD)$.
Write $\{(I_k,z_k):k\in\N\}$ for an enumeration of the set $\{(s,t]:s,t\in\Q,s<t\}\times\Q$.
It is straightforward to show that, for all $k$, the map $\phi\mapsto\phi_{I_k}^\times(z_k)$ is $\cE$-measurable.
Fix $n\in\N$, $\phi^0\in D^\circ(\R,\cD)$, $r\in(0,\infty)$ and $k\in\N$, and consider the set
$$
A(k,r)=\{\phi\in D^\circ(\R,\cD):(\chi_n(I_1)\phi_{I_1}^\times(z_1),\dots,\chi_n(I_k)\phi_{I_k}^\times(z_k))\in B(k,r)\},
$$
where
$$
B(k,r)=\bigcup_\l\{(y_1,\dots,y_k)\in\R^k:\max_{j\le k}|y_j-\chi_n(\l(I_j))\phi_{\l(I_j)}^{0\times}(z_j)|<r\},
$$
where the union is taken over all increasing homeomorphisms $\l$ of $\R$ with $\g(\l)<r$.
Note that $B(k,r)$ is an open set in $\R^k$, so $A(k,r)\in\cE$, so $A=\bigcup_{m\in\N}\bigcap_{k\in\N}A(k,r-1/m)\in\cE$.

Consider the set 
$$
C=\{\phi\in D^\circ(\R,\cD):d_D^{(n)}(\phi,\phi^0)<r\}.
$$
It is straightforward to check from the definition of $d_D^{(n)}$, that $C\sse A$. Suppose that $\phi\in A$. We shall
show that $\phi\in C$. Then $C=A$, so $C\in\cE$, and since sets of this form generate the Borel $\s$-algebra, we are done.

We can find an $m\in\N$ and, for each $k\in\N$, a $\l_k$ with $\g(\l_k)<r-1/m$ such that 
$$
|\chi_n(I_j)\phi_{I_j}^\times(z_j)-\chi_n(\l_k(I_j))\phi_{\l_k(I_j)}^{0\times}(z_j)|<r-1/m,\q j=1,\dots,k.
$$
Without loss of generality, we may assume that the sequence $(\l_k:k\in\N)$ converges uniformly on compacts, and 
that its limit, $\l$ say, satisfies $\g(\l)\le r-1/m$.
By Proposition \ref{PHICONT}, for each $j$, there is an interval $\hat I_j$, having the same endpoints as $I_j$
such that $\phi_{\l(\hat I_j)}$ is a limit point in $\cD$ of the sequence $(\phi_{\l_k(I_j)}:k\in\N)$,
so $\phi_{\l(\hat I_j)}^\times$ is a limit point in $\cS$ of the sequence $(\phi_{\l_k(I_j)}^\times:k\in\N)$. Then
$$
|\chi_n(I_j)\phi_{I_j}^\times(z_j)-\chi_n(\l(\hat I_j))\phi_{\l(\hat I_j)}^{0\times}(z_j)|\le r-1/m,
$$
for all $j$. For all finite intervals $I$ and all $z\in\R$, we can find a sequence $(j_p:p\in\N)$ such that
$I_{j_p}\to I$, $\hat I_{j_p}\to I$ and $z_{j_p}\to z$. So we obtain
$$
|\chi_n(I)\phi_{I}^\times(z)-\chi_n(\l(I))\phi_{\l(I)}^{0\times}(z)|\le r-1/m.
$$
Hence $d_D^{(n)}(\phi,\phi^0)\le r-1/m$ and $\phi\in C$, as we claimed.
\end{proof}

Recall that we define, for $e=(s,x)\in\R^2$ and $\phi\in D^\circ(\R,\cD)$,
$$
Z^{e,+}(\phi)=(\phi^+_{(s,t]}(x):t\ge s)
$$
and for $E\sse\R^2$, we define $Z^{E,+}:D^\circ(\R,\cD)\to D_E$ by $Z^{E,+}(\phi)^e=Z^{e,+}(\phi)$.
Recall also that we set
$$
C^{\circ,+}_E=\{Z^{E,+}(\phi):\phi\in C^\circ(\R,\cD)\},\q
D^{\circ,+}_E=\{Z^{E,+}(\phi):\phi\in D^\circ(\R,\cD)\},
$$
and that we define analogously $Z^{E,-}$, $C^{\circ,-}_E$ and $D^{\circ,-}_E$, 
and we set $C_E^\circ=C^{\circ,+}_E\cap C^{\circ,-}_E$ and $D_E^\circ=D^{\circ,+}_E\cap D^{\circ,-}_E$.

\begin{proposition}
Let $E$ be a countable subset of $\R^2$ containing $\Q^2$.
Then $Z^{E,+}:C^\circ(\R,\cD)\to C^{\circ,+}_E$ is a bijection,
$C^{\circ,+}_E$ is a measurable subset of $C_E$, and
the inverse bijection $\Phi^{E,+}:C^{\circ,+}_E\to C^\circ(\R,\cD)$ is a measurable map. 

Moreover $Z^{E,+}:D^\circ(\R,\cD)\to D^{\circ,+}_E$ is also a bijection,
$D^{\circ,+}_E$ is a measurable subset of $D_E$
and the inverse bijection $\Phi^{E,+}:D^{\circ,+}_E\to D^\circ(\R,\cD)$ is also a measurable map. 

Moreover the same statements hold with $+$ replaced by $-$, and we have $\Phi^{E,+}=\Phi^{E,-}$ on $D_E^\circ$.
\end{proposition}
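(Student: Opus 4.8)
The plan is to obtain everything from two facts that are essentially in hand: that $Z^{E,+}$ is injective --- which is exactly the reconstruction argument used in the proofs of Theorems~\ref{UBPR} and \ref{MAIN} --- and that $Z^{E,+}$ is Borel measurable. First I would record the injectivity. If $Z^{E,+}(\phi)=Z^{E,+}(\psi)$ for $\phi,\psi\in D^\circ(\R,\cD)$, then, since $\Q^2\sse E$, we have $\phi^+_{(s,t]}(x)=\psi^+_{(s,t]}(x)$ for all rational $s<t$ and all rational $x$; right-continuity of $\phi^+_{(s,t]}$ and $\psi^+_{(s,t]}$ in $x$ extends this to all real $x$, and then \eqref{PHID}, applied to intervals $(s_n,t_n]\to I$ with $s_n$ rational, gives $\phi_I=\psi_I$ for every finite interval $I$, so $\phi=\psi$. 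Hence $Z^{E,+}$ is a bijection onto $D^{\circ,+}_E$; the same argument with \eqref{PHIC} in place of \eqref{PHID} handles $C^\circ(\R,\cD)$, and the $-$ versions are identical, using left-continuity in $x$.

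Next I would check that $Z^{E,+}:D^\circ(\R,\cD)\to D_E$ is Borel measurable. The Borel $\sigma$-algebra of the product Skorokhod space $D_E$ is generated by the coordinate evaluations $\xi\mapsto\xi^e_t$ with $e\in E$ and $t$ rational, and the composite of $Z^{E,+}$ with the evaluation $\xi\mapsto\xi^{(s,x)}_t$ is the map $\phi\mapsto\phi^+_{(s,t]}(x)$, which is Borel measurable on $D^\circ(\R,\cD)$ by the preceding proposition; the same works for $C^\circ(\R,\cD)\to C_E$, where these maps are in fact continuous, and for the $-$ versions. Since $D^\circ(\R,\cD)$, $C^\circ(\R,\cD)$, $D_E$ and $C_E$ are all Polish, an injective Borel map between Polish spaces has Borel image and Borel-measurable inverse (the Lusin--Souslin theorem), which yields at once that $D^{\circ,+}_E$, $C^{\circ,+}_E$ and their $-$ analogues are Borel and that $\Phi^{E,+}$, $\Phi^{E,-}$ are measurable. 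This middle step is the one I expect to be the real obstacle: if one wishes to avoid Lusin--Souslin, one must instead exhibit $\Phi^{E,+}$ explicitly --- $\phi^+_{(s,t]}(x)$ for rational $s,x$ is the coordinate $\xi^{(s,x)}_t$, spatial right-continuity recovers it for real $x$, and \eqref{PHID} recovers $\phi_I$ for every interval, each operation being a countable combination of coordinate evaluations --- and then write the image $D^{\circ,+}_E$ as the (Borel) set of those $\xi$ for which these limits all exist and assemble, via Proposition~\ref{WFL}, into an element of $D^\circ(\R,\cD)$ that returns $\xi$ on $E$; making this last condition a manifestly countable intersection of Borel sets is the fiddly part, essentially a re-proof of Lusin--Souslin in this concrete setting.

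Finally I would prove $\Phi^{E,+}=\Phi^{E,-}$ on $D^\circ_E$. Fix $\xi\in D^\circ_E$ and put $\psi^+=\Phi^{E,+}(\xi)$, $\psi^-=\Phi^{E,-}(\xi)$, so $Z^{E,+}(\psi^+)=\xi=Z^{E,-}(\psi^-)$. Then for rational $s$, rational $x$ and every real $t>s$ we have $(\psi^+_{(s,t]})^+(x)=\xi^{(s,x)}_t=(\psi^-_{(s,t]})^-(x)$. Fixing such $s,t$ and writing $a=\psi^+_{(s,t]}$, $b=\psi^-_{(s,t]}$ in $\cD$, the identity $a^+=b^-$ on $\Q$ gives, by right-continuity of $a^+$ and monotonicity of $b$, $a^+(x)=\lim_{y\da x,\,y\in\Q}b^-(y)=b^+(x)$ for every real $x$, so $a=b$ in $\cD$. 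Hence $\psi^+_{(s,t]}=\psi^-_{(s,t]}$ for all rational $s$ and all real $t>s$, and then, approximating an arbitrary finite interval $I$ by intervals $(s_n,t_n]\to I$ with $s_n$ rational, \eqref{PHID} gives $\psi^+_I=\psi^-_I$ for all $I$. Therefore $\psi^+=\psi^-$, as required.
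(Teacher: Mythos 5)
Your proof is correct, but it takes a genuinely different route from the paper's. The injectivity argument is the same, and so is the final reconciliation of $\Phi^{E,+}$ and $\Phi^{E,-}$ (the paper in fact proves this almost for free because it constructs both via the same formula). The real divergence is at the measurability step: you invoke the Lusin--Souslin theorem --- an injective Borel map between Polish spaces has Borel image and Borel inverse --- after checking that $Z^{E,+}$ itself is Borel, whereas the paper avoids descriptive set theory entirely and instead exhibits $D^{\circ,+}_E$ as the set $D^{*,+}_E$ of $z\in D_E$ satisfying four explicit conditions (the degree-1 identity \eqref{ZDEG1}, spatial right-continuity \eqref{ZRIGHT}, the weak-flow inequalities \eqref{ZFLOW}, and a quantifier-over-countable-sets version of the cadlag condition \eqref{ZCADLAG}), defines $\Phi$ on this set by an explicit limit formula, and verifies by hand that $\Phi=\Phi^{E,+}$ and $D^{*,+}_E=D^{\circ,+}_E$. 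Your route is shorter and cleaner as a proof of \emph{this} proposition, and it is valid: $D^\circ(\R,\cD)$, $C^\circ(\R,\cD)$, $D_E$, $C_E$ are all complete separable metric spaces, and Borel measurability of $Z^{E,+}$ follows from measurability of the evaluation maps $\phi\mapsto\phi^+_{(s,t]}(x)$ proved in the preceding proposition. What the abstract argument does not buy you, however, is the concrete description of $D^{\circ,+}_E$ (and its $C^\circ$ analogue), which the paper then uses essentially in the proof of Proposition~\ref{MPI}: there, the identity $\mu_E(C^\circ_E)=1$ is established by verifying the conditions \eqref{ZDEG1}--\eqref{ZLEFT} one by one almost surely under $\mu_E$. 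So the paper's longer construction is not merely fastidiousness about avoiding Lusin--Souslin; it is front-loading work that is needed downstream. Your sketched fallback --- reconstruct $\phi_I$ by countable limits of coordinate evaluations and characterize the image by the countably many consistency and continuity conditions these limits must satisfy --- is exactly what the paper carries out, including the re-expression of \eqref{ZCADLAG} with quantifiers running only over countable sets, which is the ``fiddly part'' you anticipated.
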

\begin{proof}
We discuss only the cadlag case. The usual comments apply concerning the relationship of this
case with the continuous case. The simple argument that $Z^{E,+}$ is injective on $D^{\circ,+}_E$ 
was given in the proof of Theorem \ref{MAIN}.
Consider for $z\in D_E$ the conditions
\begin{equation}\label{ZDEG1}
z_t^{(s,x+n)}=z_t^{(s,x)}+n,\q s,t,x\in\Q,\q s<t,\q n\in\Z
\end{equation}
and
\begin{equation}\label{ZRIGHT}
z_t^{(s,x)}=\inf_{y\in\Q,y>x}z_t^{(s,y)},\q (s,x)\in E,\q t\in\Q,t>s.
\end{equation}
Under these conditions, define for $s,t\in\Q$ with $s<t$ and for $x\in\R$,
$$
\Phi_{(s,t]}^-(x)=\sup_{y\in\Q,y<x}z_t^{(s,y)},\q
\Phi_{(s,t]}^+(x)=\inf_{y\in\Q,y>x}z_t^{(s,y)}.
$$
Then $\Phi_{(s,t]}=\{\Phi_{(s,t]}^-,\Phi_{(s,t]}^+\}\in\cD$ and
$$
\Phi_{(s,t]}^+(x)=z_t^{(s,x)},\q s,t,x\in\Q,\q s<t.
$$
Now consider the following additional conditions on $z$: 
\begin{equation}\label{ZFLOW}
\Phi_{(t,u]}^-\circ\Phi_{(s,t]}^-\le\Phi_{(s,u]}^-\le\Phi_{(s,u]}^+\le\Phi_{(t,u]}^+\circ\Phi_{(s,t]}^+,\q s,t,u\in\Q,\q s<t<u;
\end{equation}
and
\begin{itemize}
\item[]for all $\ve>0$ and all $n\in\N$, there exist $\d>0$, $m\in\Z^+$ and $u_1,\dots,u_m\in(-n,n)$ such that
\end{itemize}
\begin{equation}\label{ZCADLAG}
\|\Phi_{(s,t]}-\id\|<\ve 
\end{equation}
\begin{itemize}
\item[]whenever $s,t\in\Q\cap(-n,n)$ with $0<t-s<\d$ and $(s,t]\cap\{u_1,\dots,u_m\}=\es$.
\end{itemize}
Note that the inequalities between functions required in (\ref{ZFLOW}) hold whenever the same inequalities
hold between their restrictions to $\Q$, by left and right continuity.
Note also that condition (\ref{ZCADLAG}) is equivalent to the following condition involving quantifiers only over countable sets:
\begin{itemize}
\item[]for all rationals $\ve>0$ and all $n\in\N$, there exist a rational $\d>0$ and an $m\in\Z^+$ such that, for all rationals $\eta>0$,
there exist rationals $s_1,t_1,\dots,s_m,t_m\in(-n,n)$, with $s_i<t_i$ for all $i$ and with
$\sum_{i=1}^m(t_i-s_i)<\eta$, such that
\end{itemize}
$$
\|\Phi_{(s,t]}-\id\|<\ve
$$
\begin{itemize}
\item[]whenever $s,t\in\Q\cap(-n,n)$ with $0<t-s<\d$ and $(s,t]\cap((s_1,t_1]\cup\dots\cup(s_m,t_m])=\es$.
\end{itemize}
Denote by $D_E^{*,+}$ the set of those $z\in D_E$ where conditions (\ref{ZDEG1}), (\ref{ZRIGHT}), (\ref{ZFLOW}) and (\ref{ZCADLAG})
all hold. Then $D^{*,+}_E$ is a measurable subset of $D_E$. Fix $z\in D^{*,+}_E$. Given finite interval $I$, we can find
sequences of rationals $s_n$ and $t_n$ such that $(s_n,t_n]\to I$ as $n\to\infty$. Then, by conditions (\ref{ZFLOW}) and (\ref{ZCADLAG}),
$$
d_\cD(\Phi_{(s_n,t_n]},\Phi_{(s_m,t_m]})\le\|\Phi_{(s_n,s_m]}-\id\|+\|\Phi_{(t_n,t_m]}-\id\|\to0
$$
as $n,m\to\infty$. So the sequence $\Phi_{(s_n,t_n]}$ converges in $\cD$, with limit $\Phi_I$, say, and $\Phi_I$ does not
depend on the approximating sequences of rationals.
In the case where $I=I_1\oplus I_2$, there exists another sequence of rationals $u_n$ such that $(s_n,u_n]\to I_1$
and $(u_n,t_n]\to I_2$ as $n\to\infty$. Hence $\Phi=(\Phi_I:I\sse\R)$ has the weak flow property, by Proposition
\ref{WFL}. It is straightforward to deduce from (\ref{ZCADLAG}) that $\Phi$ is moreover cadlag, so $\Phi=\Phi(z)\in D^\circ(\R,\cD)$.
It follows from its construction, and the preceding proposition, that the map $z\mapsto\Phi(z):D^{*,+}_E\to D^\circ(\R,\cD)$ is measurable.

Now, for all $z\in D^{*,+}_E$, we have $Z^{E,+}(\Phi(z))=z$ and for all $\phi\in D^\circ(\R,\cD)$, we have $Z^{E,+}(\phi)\in D^*_E$ and
$\Phi(Z^{E,+}(\phi))=\phi$. Hence $D_E^{\circ,+}=D^{*,+}_E$ and $\Phi^{E,+}=\Phi$, showing that these are measurable, as claimed.

Consider now for $z\in D_E$ the condition
\begin{equation}\label{ZLEFT}
z_t^{(s,x)}=\sup_{y\in\Q,y<x}z_t^{(s,y)},\q (s,x)\in E,\q t\in\Q, t>s.
\end{equation}
Denote by $D_E^{*,-}$ the set of those $z\in D_E$ where conditions (\ref{ZDEG1}), (\ref{ZFLOW}), (\ref{ZCADLAG}) and 
(\ref{ZLEFT}) all hold, and define $\Phi$ on $D^{*,-}_E$ exactly as on $D^{*,+}_E$. 
Then, by a similar argument, $D_E^{\circ,-}=D^{*,-}_E$ and $\Phi=\Phi^{E,-}$ on $D_E^{\circ,-}$.
In particular $\Phi^{E,+}=\Phi^{E,-}$ on $D_E^\circ$, as claimed.
\end{proof}

\begin{proposition}\label{MPI}
Let $E$ be a countable subset of $\R^2$ containing $\Q^2$.
Then $\mu_E(C^\circ_E)=1$.
\end{proposition}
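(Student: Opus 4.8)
The plan is to build an element $\phi\in C^\circ(\R,\cD)$ out of the coalescing Brownian paths started from the dense set $\Q^2$, and then to check that $\phi$ is consistent, in the sense of $Z^{\cdot,\pm}$, with all the remaining starting points in $E$. Fix the canonical family $(z^e:e\in E)$ of law $\mu_E$, writing $z^e=(z^e_t)_{t\ge s}$ for $e=(s,x)$ and $T^{ee'}$ for the first time $t\ge s\vee s'$ with $z^e_t-z^{e'}_t\in\Z$. From Proposition \ref{ARRA} I would first record two structural facts. Each $z^e$ is a Brownian motion started from $(s,x)$ (Lévy's criterion, from $z^e$ and $(z^e_t)^2-(t-s)$ being local martingales). \emph{Non-crossing}: whenever $z^e_{t_0}\le z^{e'}_{t_0}$ at a time $t_0$ at which both paths are defined, $z^e_t\le z^{e'}_t$ for all $t\ge t_0$, because before $T^{ee'}$ the difference is $\sqrt2$ times a Brownian motion and at the moment it first hits an integer the two paths coalesce and the difference freezes there, so it never changes sign. \emph{Degree one}: $z^{(s,x+n)}_t=z^{(s,x)}_t+n$ for all $t\ge s$ and $n\in\Z$, since then $T^{ee'}=s$ and the quadratic variation of the difference vanishes identically.

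Next I would interpolate: for rational $s<t$ and $x\in\R$ put $\phi^+_{ts}(x)=\inf\{z^{(s,y)}_t:y\in\Q,\ y>x\}$ and $\phi^-_{ts}(x)=\sup\{z^{(s,y)}_t:y\in\Q,\ y<x\}$. Then $\phi^+_{ts}$ is right-continuous with left-continuous modification $\phi^-_{ts}$, and monotonicity together with degree one give $\phi_{ts}=\{\phi^-_{ts},\phi^+_{ts}\}\in\cD$, $\mu_E$-a.s.\ for all rational $s<t$ at once. The weak-flow inequalities $\phi^-_{ut}\circ\phi^-_{ts}\le\phi^-_{us}\le\phi^+_{us}\le\phi^+_{ut}\circ\phi^+_{ts}$ for rational $s<t<u$ are a short deduction from the $\inf$/$\sup$ formulas and non-crossing across different start times (the path from $(s,x)$, read from time $t$, lies below $z^{(t,v)}$ for every rational $v>z^{(s,x)}_t$). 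The quantitative input is that $\phi_{ts}$ is near $\id$ for small $t-s$: monotonicity gives, for every $N$,
$$
\|\phi_{ts}-\id\|\le\max_{0\le k<N}\bigl|z^{(s,k/N)}_t-k/N\bigr|+\tfrac1N,
$$
so from $\E\,|z^{(s,y)}_t-y|^{2p}\le C_p(t-s)^p$ and optimizing over $N$ one gets $\E\,d_\cD(\phi_{ts},\id)^{2p}\le C_p(t-s)^{(p^2+p)/(2p+1)}$, whose exponent exceeds $1$ once $p\ge2$. Since $\phi_{t's}=\phi_{t't}\circ\phi_{ts}$ and $\phi_{ts}=\phi_{ts'}\circ\phi_{s's}$, the metric inequalities $d_\cD(f,g\circ f)\le\|g-\id\|$ and $d_\cD(f,f\circ g)\le\|g-\id\|$ bound the $d_\cD$-increments of $\phi$ in $t$ and in $s$ by $\|\phi_{t't}-\id\|$ and $\|\phi_{s's}-\id\|$ respectively; a Kolmogorov-type continuity argument then produces, $\mu_E$-a.s., a jointly continuous family on $\{s\le t\}$ (setting $\phi_{ss}=\id$) with $\phi_{ts}\to\id$ as $s\uparrow t$ and as $t\downarrow s$ at every $t$. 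By Proposition \ref{WFL} the weak-flow inequalities pass to these limits, so $\phi\in C^\circ(\R,\cD)$.

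It remains to show, $\mu_E$-a.s., that $z^e=Z^{e,+}(\phi)=Z^{e,-}(\phi)$ for every $e=(s,x)\in E$; since $E$ is countable it suffices to fix one such $e$. For rational $s$ and $t$, non-crossing gives $\phi^-_{ts}(x)\le z^{(s,x)}_t\le\phi^+_{ts}(x)$, and these are equalities by \emph{continuity in the initial point}: the sequences $z^{(s,y)}_t$ are monotone in $y$, so converge as $y\downarrow x$ and as $y\uparrow x$ along rationals, and the limits equal $z^{(s,x)}_t$ because $z^{(s,y)}_t-z^{(s,x)}_t$ is $\sqrt2$ times a Brownian motion run until it meets $\Z$ — for $|y-x|$ small this meeting occurs close to time $s$ and the difference stays small, so the (monotone, hence almost sure) limit is $0$. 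To reach irrational $s$ (or $t$), approximate by rationals $s_n\downarrow s$: the path $z^{(s,x)}$ is at a point $z^{(s,x)}_{s_n}\to x$ at time $s_n$, so choosing rationals $y_n\to x$ gives $z^{(s_n,y_n)}_t\to z^{(s,x)}_t$ by the same coalescence estimate, while $\phi_{ts_n}\to\phi_{ts}$ by time-continuity of $\phi$; squeezing, $z^{(s,x)}_t$ lies between $\phi^-_{ts}(x)$ and $\phi^+_{ts}(x)$ and equals both. Intersecting countably many almost sure events, $(z^e)_{e\in E}=Z^{E,+}(\phi)=Z^{E,-}(\phi)$ with $\phi\in C^\circ(\R,\cD)$, so $(z^e)_{e\in E}\in C^{\circ,+}_E\cap C^{\circ,-}_E=C^\circ_E$, i.e.\ $\mu_E(C^\circ_E)=1$.

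I expect the main obstacle to be the quantitative time-continuity of $\phi$ upgraded to hold almost surely at every time: one must pass from Brownian estimates on finitely many coordinate motions to control of $\phi_{ts}$ that is uniform in $x$, hence uniform over the continuum of starting heights, and this is exactly where coalescence of the full flow — not merely the finite-dimensional laws — is needed. The extension of the consistency statement across irrational times is the second, closely related, technical point.
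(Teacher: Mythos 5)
Your proposal is correct, and its overall architecture mirrors the paper's: show the degree-one (periodicity) identity, show non-crossing, show that the rational-start paths determine $\phi^{\pm}_{ts}$ by $\inf/\sup$ over rational starting heights and that these agree with the $z^e$, and show uniform time-continuity of the resulting weak flow. Where you differ is in the continuity step, and the difference is genuine. You derive the moment bound $\E\|\phi_{ts}-\id\|^{2p}\le C(t-s)^{\theta}$ (your exponent $(p^2+p)/(2p+1)$ should read $2p^2/(2p+1)$ after optimizing over $N$, but either exceeds $1$ for $p\ge2$, so nothing breaks) and then invoke Kolmogorov. To make that rigorous one needs a slightly non-standard chaining: the endpoints $s,t$ are rational rather than dyadic, so you chain $\|\phi_{ts}-\id\|\le\|\phi_{t b_{m}}-\id\|+\|\phi_{b_m a_m}-\id\|+\|\phi_{a_m s}-\id\|$ through a dyadic sub-interval $(a_m,b_m]\subset(s,t]$, control the middle dyadic piece by Borel--Cantelli over levels, and handle the non-dyadic tails by summing $\|\phi_{a_{m+1}(s)a_m(s)}-\id\|\le M_{m+1}$ over levels together with $\|\phi_{a_M(s) s}-\id\|\to0$ a.s.\ for each fixed rational $s$ (a countable intersection). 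This works, but is not automatic from the Kolmogorov--Centsov statement one usually cites. The paper instead sets up a space-time dyadic grid $E_n$, applies Borel--Cantelli to the one-sided oscillation $V^e(2^{-n})$ of the Brownian paths started at grid points, and then for an arbitrary rational $(s,x)$ \emph{brackets} $Z^{(s,x)}$ between two grid paths $Z^{e^{\pm}}$ started earlier with $Z^{e^-}_s<x<Z^{e^+}_s$, using non-crossing to propagate the sandwich forward in time. That bracketing yields the uniform modulus in one step and avoids any chaining across non-dyadic endpoints; it leans more heavily on the coalescing/non-crossing geometry. Both routes are valid. Finally, the obstacle you flag at the end is real but slightly mislocated: uniformity over the spatial continuum is already dispatched by the $1/N$ grid plus monotonicity; the delicate point is uniformity over the continuum of (rational) starting \emph{times}, which is exactly what the paper's bracketing and your chaining-with-vanishing-tails are each designed to handle.
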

\begin{proof}
We use an identification of $C^\circ_E$ analogous to that implied for $D^\circ_E$ by the preceding proof.
The same five conditions (\ref{ZDEG1}), (\ref{ZRIGHT}), (\ref{ZFLOW}), (\ref{ZCADLAG}) and (\ref{ZLEFT}) characterize
$C^\circ_E$ inside $C_E$, except that, in (\ref{ZCADLAG}), only the case $m=0$ is allowed.
Recall that, under $\mu_E$, for time-space starting points $e=(s,x)$ and $e'=(s',x')$, 
the coordinate processes $Z^e$ and $Z^{e'}$ behave as independent Brownian motions up to 
$$
T^{ee'}=\inf\{t\ge s\vee s':Z_t^e-Z_t^{e'}\in\Z\},
$$
after which they continue to move as Brownian motions, but now with a constant separation.
In particular, if $s=s'$ and $x'=x+n$ for some $n\in\Z$, then $T^{ee'}=0$, so $Z^{e'}_t=Z^e_t+n$
for all $t\ge s$, so (\ref{ZDEG1}) holds almost surely.

Let $(s,x)\in E$ and $t,u\in\Q$, with $s\le t<u$.
Consider the event
$$
A=\left\{\sup_{y\in\Q,y<Z_t^{(s,x)}}Z_u^{(t,y)}=Z_u^{(s,x)}=\inf_{y'\in\Q,y'>Z_t^{(s,x)}}Z_u^{(t,y')}\right\}.
$$
Fix $n\in\N$ and set $Y=n^{-1}\lfloor nZ_t^{(s,x)}\rfloor$ and $Y'=Y+1/n$.
Then $Y$ and $Y'$ are $\cF_t$-measurable, $\Q$-valued random variables.
Now $\PP(Y<Z_t^{(s,x)}<Y')=1$ and 
$$
\{Y<Z_t^{(s,x)}<Y'\}\cap\{T^{(t,Y)(t,Y')}\le u\}\sse A.
$$
By the Markov property of Brownian motion, almost surely,
$$
\PP(T^{(t,Y)(t,Y')}\le u|\cF_t)=2\Phi\left(\frac1{n\sqrt{2(u-t)}}\right),
$$
and the right-hand side tends to $1$ as $n\to\infty$.
So, by bounded convergence, we obtain $\PP(A)=1$. 
On taking a countable intersection of such sets $A$ over the possible values
of $s,x,t$ and $u$, we deduce that
conditions (\ref{ZRIGHT}), (\ref{ZFLOW}) and (\ref{ZLEFT}) hold almost surely.

It remains to establish the continuity condition (\ref{ZCADLAG}). 
For a standard Brownian motion $B$ starting from $0$, we have, for $n\ge4$,
$$
\PP\left(\sup_{t\le1}|B_t|>n\right)\le e^{-n^2/2}.
$$
Define, for $\d>0$ and $e=(s,x)\in E$, 
$$
V^e(\d)=\sup_{s\le t\le s+\d^2}|Z^e_t-x|.
$$
Then, by scaling, 
$$
\PP(V^e(\d)>n\d)\le e^{-n^2/2}.
$$
Consider, for each $n\in\N$ the set 
$$
E_n=\{(j2^{-2n},k2^{-n}):j\in\tfrac12\Z\cap[-2^{2n},2^{2n}),k=0,1,\dots,2^n-1\}
$$
and the event
$$
A_n=\bigcup_{e\in E_n}\{V^e(2^{-n})>n2^{-n}\}.
$$
Then $\PP(A_n)\le|E_n|e^{-n^2/2}$, so $\sum_n\PP(A_n)<\infty$, so by Borel--Cantelli, almost surely, 
there is a random $N<\infty$ such that $V^e(2^{-n})\le n2^{-n}$ for all $e\in E_n$, for all $n\ge N$.

Given $\ve>0$, choose $n\ge N$ such that $(4n+2)2^{-n}\le\ve$ and set $\d=2^{-2n-1}$. Then, for all
rationals $s,t\in(-n,n)$ with $0<t-s<\d$ and all rationals $x\in[0,1]$, there exist $e^\pm=(r,y^\pm)\in E_n$ such that 
$$
r\le s<t\le r+2^{-2n},\q  x+n2^{-n}<y^+\le x+(n+1)2^{-n},\q x-(n+1)2^{-n}\le y^-<x-n2^{-n},
$$
Then, $Z^{e^-}_s<x<Z_s^{e^+}$, so
$$
x-\ve\le Z^{e^-}_t\le Z_t^{(s,x)}\le Z_t^{e^+}\le x+\ve.
$$
Hence $\|\Phi_{(s,t]}-\id\|\le\ve$, as required.
\end{proof}

Recall that $\Phi^E$ denotes the common restriction of $\Phi^{E,+}$ and $\Phi^{E,-}$ to $D^\circ_E$.
\begin{proposition}
Let $E=\Q^2$. Then $\Phi^E$ is continuous at $z$ for all $z\in C^\circ_E$.
\end{proposition}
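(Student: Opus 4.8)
The plan is to verify the sequential criterion for continuity at $z$. So suppose $z_k\to z$ in $D_E$ with every $z_k\in D^\circ_E$ and $z\in C^\circ_E$, and write $\phi^k=\Phi^E(z_k)$, $\phi=\Phi^E(z)$, so that $\phi\in C^\circ(\R,\cD)$; by the definition of $\Phi^E$ the hypothesis says exactly that for every $(s,x)\in\Q^2$ the cadlag path $t\mapsto(\phi^k_{(s,t]})^+(x)$ converges in the Skorokhod topology to $t\mapsto(\phi_{(s,t]})^+(x)$, and since this limit path is continuous the convergence is in fact uniform on every compact $[s,M]$. Since $d_D(\phi^k,\phi)=\sum_n2^{-n}(d_D^{(n)}(\phi^k,\phi)\wedge1)$, it suffices to fix $n$ and show $d_D^{(n)}(\phi^k,\phi)\to0$; and as $\phi$ is continuous I would take the reparametrization $\lambda=\id$ in the infimum defining $d_D^{(n)}$ (so $\g(\lambda)=0$ and $\id(I)=I$), which reduces the goal to proving $\sup_{I\sse(-(n+1),n+1)}d_\cD(\phi^k_I,\phi_I)\to0$.

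Now I would fix $\ve>0$ and set $M=n+2$. Using $\phi\in C^\circ(\R,\cD)$ and the uniform continuity of $(s,t)\mapsto\phi_{ts}$ on the compact triangle $\{-M\le s\le t\le M\}$ (Proposition \ref{PHICONT}), choose $\delta\in(0,1]$ with $\|\phi_{(s,t]}-\id\|<\ve$ for all $s,t\in(-M,M)$ with $0<t-s<\delta$. Fix a finite grid of rationals $t_0<\cdots<t_P$ covering $[-M,M]$ with consecutive gaps less than $\delta$, and a finite periodic grid of rationals $y_0<\cdots<y_Q$ of spacing less than $\ve$. Applying the local uniform convergence above to the finitely many starting points $(t_p,y_q)\in\Q^2$, obtain $K$ such that, for all $k\ge K$ and all $t_p,y_q$ simultaneously, $\sup_{t_p\le u\le M}|(\phi^k_{(t_p,u]})^+(y_q)-(\phi_{(t_p,u]})^+(y_q)|<\ve$.

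The core estimate is: for every grid time $t_p$, every real $t'\in(t_p,M]$, and every $k\ge K$ one has $d_\cD(\phi^k_{(t_p,t']},\phi_{(t_p,t']})\le\ve$. For rational $t'$ this comes from monotonicity of $y\mapsto(\phi^k_{(t_p,t']})^+(y)$: given rational $y$ with $y_q\le y\le y_{q+1}$, this squeezes $(\phi^k_{(t_p,t']})^+(y)$ between the grid values $(\phi^k_{(t_p,t']})^+(y_q)$ and $(\phi^k_{(t_p,t']})^+(y_{q+1})$, which by the choice of $K$ lie within $\ve$ of $(\phi_{(t_p,t']})^+(y_q)$ and $(\phi_{(t_p,t']})^+(y_{q+1})$; since $y_{q+1}<y+\ve$ and $y_q>y-\ve$ this gives $(\phi_{(t_p,t']})^-(y-\ve)-\ve\le(\phi^k_{(t_p,t']})^\pm(y)\le(\phi_{(t_p,t']})^+(y+\ve)+\ve$. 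Right/left continuity extends these inequalities to all $x\in\R$, and the characterization $d_\cD(f,g)\le\ve\iff f^-(x-\ve)-\ve\le g^-(x)\le g^+(x)\le f^+(x+\ve)+\ve$ yields the bound; the case of real $t'$ follows by approximation using $\phi_{I_m}\to\phi_I$ as $I_m\to I$. In particular, for $t'-t_p<\delta$ this yields $d_\cD(\phi^k_{(t_p,t']},\id)\le2\ve$.

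Finally, take an arbitrary $I=(s,t]\sse(-(n+1),n+1)$ and let $t_p$ be the largest grid time with $t_p\le s$, so $0\le s-t_p<\delta$; then $\phi_{(t_p,s]}$, and hence also $\phi^k_{(t_p,s]}$, are within $C\ve$ of $\id$ in supremum norm. Feeding these near-identity maps into the weak-flow inequalities for $\phi$ and for $\phi^k$ on the splitting $(t_p,t]=(t_p,s]\oplus(s,t]$, and invoking the $d_\cD$-characterization, gives $d_\cD(\phi_I,\phi_{(t_p,t]})\le C\ve$ and $d_\cD(\phi^k_I,\phi^k_{(t_p,t]})\le C\ve$; combining with the core estimate for the grid-started interval $(t_p,t]$ gives $d_\cD(\phi^k_I,\phi_I)\le C'\ve$, uniformly over all such $I$ and all $k\ge K$, so $d_D^{(n)}(\phi^k,\phi)\le C'\ve$ for $k\ge K$, and letting $\ve\downarrow0$ and summing over $n$ finishes the proof. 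I expect the one delicate point to be the passage from the pointwise-in-coordinate convergence supplied by $D_E$ to control uniform over the continuum of intervals; what makes it go through is that it suffices to control $\phi^k$ on intervals starting at one of the finitely many grid times — these are genuine coordinate evaluations of $z_k$, so only finitely many convergences enter, with monotonicity in the starting point reducing ``all rational starting points'' to the finite spatial grid — after which the merely one-sided form of the weak-flow inequalities is harmless, since the $d_\cD$-characterization only asks that one function be squeezed between nearby values of the other.
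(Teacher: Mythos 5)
Your proof is correct and takes essentially the same route as the paper's: reduce to a finite grid of rational time-space points, use that Skorokhod convergence of the cadlag coordinate paths to a continuous limit gives locally uniform convergence at those finitely many grid coordinates, and then extend to arbitrary starting points and intervals via spatial monotonicity and the weak-flow property, using the modulus of continuity of $\phi$ to absorb the grid-rounding error. Like the paper's proof, your final step verifies the bound only for half-open intervals $(s,t]$ rather than all finite intervals appearing in the supremum defining $d_D^{(n)}$; this is harmless because of the convergence $\phi_{I_m}\to\phi_I$ as $I_m\to I$ from (\ref{PHID}), an omission shared with the original.
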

\begin{proof}
Consider a sequence $(z_k:k\in\N)$ in $D^\circ_E$ and suppose that $z_k\to z$ in $D_E$, with $z\in C^\circ_E$.
Set $\phi^k=\Phi^E(z_k)$ and $\phi=\Phi^E(z)$. 
It will suffice to show that, for all $n\in\N$, we have $d_C^{(n)}(\phi^k,\phi)\to0$ as $k\to\infty$.
Given $\ve>0$, choose $\ve'>0$ and $\eta>0$ so that $\ve'+2\eta<\ve$.
Then choose $m\in\N$ so that $\ve'+2\eta+1/m<\ve$ and so that $\|\phi_{ts}-\id\|<\eta$ for all $s,t\in(-n,n)$
with $0<t-s<1/m$. Consider the finite set
$$
F=(m^{-1}\Z\cap[-n,n))\times(m^{-1}\Z\cap[0,1)).
$$
There exists a $K<\infty$ such that, for all $k\ge K$, all $(s_0,x_0)\in F$, and all $t\in(s_0,n]$,
$$
|\phi^{k,+}_{(s_0,t]}(x_0)-\phi_{ts}^+(x_0)|=|\phi^{k,-}_{(s_0,t]}(x_0)-\phi_{ts}^-(x_0)|<\ve'.
$$
For all $s\in[-n,n)$ and all $x\in[0,1)$, there exists $(s_0,x_0)\in F$ such that
$$
s_0\le s<s_0+1/m,\q x_0\le x+\ve'+\eta+1/m<x_0+1/m.
$$
Then
$$
\phi^{k,+}_{(s_0,s]}(x_0)\ge\phi^+_{ss_0}(x_0)-\ve'\ge x_0-\ve'-\eta>x,
$$
so
$$
\phi^{k,+}_{(s_0,t]}(x_0)\ge \phi^{k,+}_{(s,t]}(x),\q t\ge s.
$$
Also, we have
$$
\phi_{ss_0}^+(x_0)\le x_0+\eta\le x+\ve'+2\eta+1/m<x+\ve,
$$
so
$$
\phi^+_{ts_0}(x_0)\le\phi^+_{ts}(x+\ve),\q t\ge s.
$$
Now, for all $t\in(s,n]$,
$$
\phi_{(s_0,t]}^{k,+}(x_0)\le\phi_{ts_0}^+(x_0)+\ve,
$$
so
$$
\phi_{(s,t]}^{k,+}(x)\le\phi_{ts}^+(x+\ve)+\ve.
$$
By a similar argument, for all $t\in(s,n]$,
$$
\phi_{(s,t]}^{k,-}(x)\ge\phi_{ts}^-(x-\ve)-\ve,
$$
so $d_\cD(\phi^k_{(s,t]},\phi_{(s,t]})\le\ve$. Hence $d_C^{(n)}(\phi^k,\phi)\to0$ as $k\to\infty$, as required.
\end{proof}

\subsection{From the coalescing Brownian flow to the Brownian webs}
The coalescing Brownian flow, which provides the limit object for our main result
is a refinement, in certain respects, of Arratia's flow of coalescing Brownian motions,
via the work of T\' oth and Werner. In a series of works, beginning with \cite{FINR}, Fontes, Newman and others
have already provided another such refinement, in fact several, which they call Brownian webs.
They direction taken by Fontes et al. emphasises path properties: 
the Brownian web is conceived as a random element of a space $\cH$ of compact collections of
$\R$-valued paths with specified starting points. 
In our formulation, one does not see so clearly the possible varieties of path, but we find the
state space $C^\circ(\R,\cD)$ a convenient one, with a natural time-reversal map, and just one
candidate for a probability measure corresponding to Arratia's flow.
We now attempt to clarify the relationship between 
the coalescing Brownian flow and one of the Brownian webs.

In \cite{FINR}, the authors make the comment that there is more than
one natural distribution of an $\cH$-valued\footnote{We refer to \cite{FINR} for precise definitions and attempt here only to
give a flavour of their results.} random variable $\cW$ that satisfies the following
two conditions.
\begin{enumerate}
  \item[(i)]
   From any deterministic point $(x,t)$ in space-time, there is almost
   surely a unique path $W_{x,t}$ in $\cW$ starting from $(x,t)$.
  \item[(ii)]
   For any deterministic $n$ and $(x_1, t_1), \ldots, (x_n, t_n)$, the
   joint distribution of $W_{x_1, t_1}, \ldots, W_{x_n, t_n}$ is that
   of coalescing Brownian motions (with unit diffusion constant).
\end{enumerate}
The {\em standard Brownian web} satisfies (i) and (ii) together with a certain
minimality condition.
On the other hand, the {\em forward full Brownian web}, introduced in \cite{FN},
satisfies (i) and (ii) together with a certain maximality condition, subject to a
non-crossing condition.
Also in \cite{FN}, the authors characterize a third object, the {\em full
Brownian web}, which is a random variable on the space $\cH^F$ of
compact collections of paths from $\R\to\R$, and explain how this is naturally related
to the other Brownian webs.

We now describe in detail the space $\cH^F$ of the full Brownian web and give its characterization. 
We then show that there is a natural way to realise the full Brownian web (and hence also the standard 
Brownian web and forward full Brownian web) as a random variable on $C^\circ(\R,\cD)$. 
Define the function $\Phi : [-\infty, \infty]\times\R \rightarrow [0,1]$ by
$$
\Phi(x,t)=\frac{\tanh(x)}{1+|t|}.
$$
Now construct the two metric spaces $(\Pi^F,d)$ and $(\cH^F,d_{\cH^F})$ as follows. 
Let $\Pi^F$ denote the set of functions $f:\R\rightarrow[-\infty,\infty]$ such that $\Phi(f(t),t)$
is continuous, and define
$$
d^F(f_1,f_2)= \sup_{t\in\R}|\Phi(f_1(t),t)-\Phi(f_2(t),t)|.
$$
Then $(\Pi^F,d^F)$ is a complete separable
metric space.
Now let $\cH^F$ denote the set of compact
subsets of $(\Pi^F,d^F)$, with $d_{\cH^F}$ the induced Hausdorff metric
$$
d_{\cH^F}(K_1,K_2)=\sup_{g_1\in K_1}\inf_{g_2\in K_2}d^F(g_1,g_2)\vee
              \sup_{g_2\in K_2}\inf_{g_1\in K_1}d^F(g_1,g_2).
$$
The space $(\cH^F,d_{\cH^F})$ is also complete and separable. Let $\cF_{\cH^F}$ be the Borel $\sigma$-algebra on $\cH^F$.
The full Brownian web is defined in \cite{FN} as follows.
\begin{defn}
A full Brownian web $\bar{\cW}^F$ is any $({\cH^F},{\cF}_{{\cH^F}})$-valued random variable
whose distribution has the following properties.
\begin{itemize}
\item[(a)] Almost surely the paths of $\bar{\cW}^F$ are noncrossing (although
they may touch, including coalescing and bifurcating). 
\item[($b_1$)]
From any deterministic point $(x,t) \in \mathbb{R}^2$,
there is almost surely a unique path $W_{x,t}^F$ passing through
$x$ at time $t$.
\item[($b_2$)]
For any deterministic $m$, $\{ (x_1,t_1), \ldots, (x_m,t_m) \}$,
the joint distribution of the semipaths $\{ W^F_{x_j, t_j}(t), t
\geq t_j, j=1, \dots, m \}$ is that of a flow of coalescing Brownian
motions (with unit diffusion constant).
\end{itemize}
\end{defn}
They show that any two full Brownian webs have the same distribution. We make a variation of this definition, to tie up with our
work, in requiring in ($b_2$) that the semipaths have the law of coalescing Brownian motions {\em on the circle}.

Fix $E$, a countable subset of $\R^2$ containing $\Q^2$. 
We define a map $\theta_{F} : C^\circ(\R,\cD) \rightarrow \cH^F$ as follows.
Let
$$
\Pi = \bigcup_{t \in \R} C([t, \infty), [-\infty, \infty]).
$$
Let $\bar{\cH}$ be the set of all subsets $A$ of $\Pi\cup\Pi^F$ having the the following
noncrossing property
$$
\text{ for all $f,g\in A$ and all $s,t\in\dom(f)\cap\dom(g)$, $f(s)<g(s)$ implies $f(t)\le g(t)$.}
$$
We say that a set $U \in \bar{\cH}$ is {\em maximal} if, 
for any $f\in \Pi\cup\Pi^F$, $U \cup \{ f \} \in \bar{\cH}$ implies $f \in U$.
For $\phi \in C^\circ(\R,\cD)$, define $\th_0(\phi)=\{Z^{e,+}(\phi):e\in E\}$.  
Note that, since $E$ is dense in $\R^2$, if $f,g\in\Pi\cup\Pi^F$ and $\th_0(\phi)\cup\{f\},\th_0(\phi)\cup\{g\}\in\bar{\cH}$,
then also $\th_0(\phi)\cup\{f,g\}\in\bar{\cH}$.
Hence there exists a unique maximal set $\theta_{FF}(\phi)\in\bar{\cH}$ containing $\th_0(\phi)$. Furthermore, this set is independent of the choice of $E$.
Define $\theta_F(\phi) = \theta_{FF}(\phi) \cap \Pi^F$.

This can be viewed as a random variable on $C^\circ(\R,\cD)$ as a consequence of Propositions \ref{thetainh} and \ref{isomet} below. 
We note that, under the measure $\mu_W$, the conditions to be a full Brownian web are almost surely satisfied: 
(a) is immediate from the construction; 
($b_1$) follows from Proposition \ref{FSX}, where condition (i) holds almost surely by Proposition \ref{MPI}, and condition (ii) depends on a countable number of almost sure conditions; 
($b_2$) follows from Theorem \ref{UBPR}. In fact $\theta_{FF}$ defined above is a forward full Brownian web, 
and the standard Brownian web can be constructed similarly, but we omit the details here.

\begin{prop}
\label{thetainh}
The set $\theta_{F}(\phi) \subset \Pi^F$ is compact.
\end{prop}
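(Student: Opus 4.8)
The plan is to show that $\theta_F(\phi)$ is both closed and precompact in the complete metric space $(\Pi^F,d^F)$; compactness follows at once. For closedness I would first note that, since $\th_0(\phi)$ is itself noncrossing (by the weak flow property) and since $\theta_{FF}(\phi)$ is the \emph{maximal} element of $\bar{\cH}$ containing it, a path $f\in\Pi^F$ lies in $\theta_F(\phi)$ precisely when $f$ and $Z^{e,+}(\phi)$ are noncrossing for every $e\in E$. Thus $\theta_F(\phi)=\bigcap_{e\in E}\{f\in\Pi^F:f\ \text{and}\ Z^{e,+}(\phi)\ \text{are noncrossing}\}$, and each set in this intersection is closed: $d^F$-convergence $f_n\to f$ forces $\Phi(f_n(t),t)\to\Phi(f(t),t)$ for every $t$, hence $f_n(t)\to f(t)$ in $[-\infty,\infty]$ (as $\Phi(\cdot,t)$ is a homeomorphism of $[-\infty,\infty]$ onto an interval), and the noncrossing relation with a \emph{fixed, finite-valued} path passes to such pointwise limits.

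For precompactness I would transport the problem to the space $C([-\infty,\infty])$ of continuous functions on the two-point compactification of $\R$ via the isometry $f\mapsto g_f$, $g_f(t)=\Phi(f(t),t)$, with $g_f(\pm\infty)=0$. The family $\{g_f:f\in\theta_F(\phi)\}$ is uniformly bounded, indeed $|g_f(t)|\le 1/(1+|t|)$ for all $f$, which also supplies equicontinuity near $\pm\infty$ for free. By the Arzel\`a--Ascoli theorem it then remains only to prove uniform equicontinuity of $\{g_f:f\in\theta_F(\phi)\}$ at each finite time, and this is the crux.

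The tool for the crux is the continuity of the flow $\phi$ itself. By Proposition~\ref{PHICONT} the map $(u,r)\mapsto\phi_{ur}$ is continuous on $\{r\le u\}$, hence uniformly continuous on compact time windows, and combined with $\|h-\id\|=2d_\cD(h,\id)$ this yields uniform-in-space control $\|\phi_{ur}-\id\|\to0$ as $u-r\to0$, uniformly over a given compact window. Given $f\in\theta_F(\phi)$ and nearby times $s<t$, I would pick a rational $r$ just below $s$ together with rational starting positions straddling $f(s)$ when $f(s)$ is finite; because $f$ does not cross the semipaths $Z^{(r,\cdot),+}(\phi)$ from those positions, $f(t)$ is trapped between the images of those positions under $\phi^+_{tr}$, and since $\phi_{tr}$ is uniformly close to $\id$ these images lie close to $f(s)$ — so $f$ varies little over $[s,t]$, uniformly in $f$, whenever $f(s)$ is bounded. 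When $|f(s)|$ is large (or $f(s)=\pm\infty$) only a one-sided sandwich is used, showing $f(t)$ stays large of the same sign, whereupon the saturation of $\tanh$ makes $g_f$ nearly equal to $\pm1/(1+|\cdot|)$ there. Combining these two regimes — with the threshold between them chosen so that $\tanh$ differs from $\pm1$ by less than the target accuracy beyond it — gives the required uniform equicontinuity, and hence precompactness.

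The step I expect to be the main obstacle is exactly this uniform equicontinuity at finite times. A naive two-sided sandwich by flow semipaths straddling $f$ is only informative if the \emph{separation} of the two trapping semipaths at the later time is controlled, and it breaks down altogether where $f(s)=\pm\infty$; both difficulties are circumvented by the dichotomy above, which succeeds precisely because the continuity of $\phi$ delivers control of $\phi_{ur}$ in the sup norm (so that semipaths started near $f(s)$ remain near $f(s)$ over short later intervals), rather than a separation bound, which need not be uniform over $\theta_F(\phi)$.
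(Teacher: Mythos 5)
Your proposal is correct, but it takes a genuinely different route from the paper. The paper argues sequential compactness directly: given a sequence in $\theta_F(\phi)$, it exploits the fact that a noncrossing family of full paths is totally ordered (if $f(s)<g(s)$ anywhere then $f\le g$ everywhere), so one can extract a subsequence that is monotone at every time and hence converges pointwise in $[-\infty,\infty]$; it then treats the case where the pointwise limit hits $\pm\infty$ by using invariance of $\theta_F(\phi)$ under integer translates, and in the finite case uses continuity of $(s,t)\mapsto\phi_{ts}$ to upgrade pointwise convergence on a finite time-grid to uniform convergence on compacts, concluding that the limit is continuous and noncrossing with $\th_0(\phi)$, hence in $\theta_F(\phi)$. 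Your approach instead splits compactness into closedness plus precompactness, with precompactness coming from Arzel\`a--Ascoli on $C([-\infty,\infty])$ after transporting via $f\mapsto\Phi(f(\cdot),\cdot)$. What the two approaches share is the essential use of the same analytic input — uniform control of $\|\phi_{ur}-\id\|$ over short intervals in a compact time window, which you invoke for the equicontinuity estimate and the paper invokes to pass from grid convergence to uniform convergence; and both must handle the $\pm\infty$ behaviour specially (the paper via monotone escape using integer shifts, you via saturation of $\tanh$). What your route buys is that you never need the total-order/monotone-subsequence extraction, and the identity $\theta_F(\phi)=\bigcap_{e\in E}\{f:f\text{ noncrossing with }Z^{e,+}(\phi)\}$ (which is justified by the paper's remark about $\th_0(\phi)\cup\{f\}$ and $\th_0(\phi)\cup\{g\}$ implying $\th_0(\phi)\cup\{f,g\}\in\bar\cH$) makes closedness nearly immediate; the cost is that the equicontinuity dichotomy is slightly more delicate than the paper's grid argument, since the two-sided sandwich must be arranged with the gaps $f(s)-y^-$ and $y^+-f(s)$ dominated by the flow's sup-norm modulus, and the threshold for switching to the one-sided/saturation regime must be chosen before $\d$.
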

  \begin{proof}
   Suppose that $f_1, f_2, \ldots \in \theta_{F}(\phi)$. Since paths in
   $\theta_{F}(\phi)$ are noncrossing, there exists a subsequence $n_r$
   such that $f_{n_r}(t)$ is monotone for all $t \in \R$ and so
   there exists some $f:\R \rightarrow [-\infty, \infty]$ such that $f_{n_r}
   \rightarrow f$ pointwise.

   Since $\phi_{ts}(\cdot)$ is periodic with period 1, if $g \in \theta_{F}(\phi)$, then $g+m \in \theta_{F}(\phi)$ for all $m \in \mathbb{Z}$. Suppose $f(s) = \infty$ for some $s \in \mathbb{R}$. By restricting to a further subsequence if necessary, we may assume $f_{n_r}(s) \geq f_{n_{r-1}}(s) + 1$ for all $r$. But then, by noncrossing, $f_{n_r}(t) \geq f_{n_{r-1}}(t) + 1$ for all $t \in \mathbb{R}$. Therefore $f(t) = \infty$ for all $t \in \mathbb{R}$ and $f_{n_r} \rightarrow f$ in $(\Pi^F,d^F)$. A similar argument applies if $f(s)=-\infty$ for some $s \in \mathbb{R}$. Hence we may assume $|f(t)| < \infty$ for all $t \in \mathbb{R}$.

   Note that $d^F(f,g) \leq d^{(n)}(f,g) \vee 2/(n+1)$ where
   $$
   d^{(n)}(f,g) = \sup_{-n < t < n} |f(t) - g(t)|.
   $$
   Given $\epsilon > 0$, there exist $n > 2\epsilon^{-1}$, and $-n = a_0 < \cdots < a_M=n$ such that
   $\| \phi^+_{s a_k} - \id \| < \frac{\epsilon}{4}$ for all $a_k \leq s \leq
   a_{k+1}$, $k = 0, \ldots, M-1$. Pick $N$ sufficiently large
   that if $n_r > N$, $|f_{n_r}(a_k)-f(a_k)| <
   \frac{\epsilon}{4}$ for $k = 1, \ldots, M-1$. Then for $a_k
   \leq s < a_{k+1}$,
   $$
   |f_{n_r}(s) - f(s)| \leq |\phi^+_{s a_{k}}(f(a_k) +
   \tfrac{\epsilon}{4}) - \phi^+_{s a_{k}}(f(a_k) - \tfrac{\epsilon}{4})|
   < \epsilon.
   $$
   Hence, $f_{n_r} \rightarrow f$ in $(\Pi^F,d^F)$. Therefore, $f$
   is continuous and $\theta_{FF}(\phi) \cup \{ f \}$ is noncrossing,
   and so $f \in \theta_F (\phi)$ proving compactness.
\end{proof}

\begin{prop}
\label{FSX}
For every $(x,s) \in \mathbb{R}^2$, there exists some $f \in \theta_F (\phi)$ with $f(s)=x$. Moreover, $f$ is unique if the following two conditions hold.
\begin{enumerate}
 \item[(i)] $\phi^+_{ts}(x) = \phi^+_{ts}(x)$ for all $t \geq s$;
 \item[(ii)] $\phi^+_{st}(y) \neq x$ for all $(y,t) \in \mathbb{Q}^2$ with $t < s$.
\end{enumerate}
\end{prop}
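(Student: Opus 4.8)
The plan is to prove uniqueness first (it is short) and then existence.

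\textbf{Uniqueness.} Read condition (i) as saying that $x$ is a continuity point of $\phi_{ts}$ for every $t\ge s$ (so that $\phi^+_{ts}(x)=\phi^-_{ts}(x)$). Let $f\in\theta_F(\phi)$ with $f(s)=x$. For each rational $q<x$ the forward trajectory $Z^{(s,q),+}(\phi)=(\phi^+_{ts}(q):t\ge s)$ lies in $\theta_0(\phi)\subseteq\theta_{FF}(\phi)$ and starts strictly below $f$ at time $s$, so the noncrossing property gives $\phi^+_{ts}(q)\le f(t)$ for all $t\ge s$; letting $q\uparrow x$ through the rationals yields $\phi^-_{ts}(x)\le f(t)$, and the symmetric argument with rational $q>x$ gives $f(t)\le\phi^+_{ts}(x)$, so by (i) $f(t)=\phi^+_{ts}(x)$ on $[s,\infty)$. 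Now suppose $f,f'\in\theta_F(\phi)$ both pass through $(x,s)$ but $f\ne f'$. Since they agree on $[s,\infty)$ there is $t_0<s$ with, say, $f(t_0)<f'(t_0)$; by continuity choose a rational $t_1\in(t_0,s)$ and a rational $z$ with $f(t_1)<z<f'(t_1)$. Then $Z^{(t_1,z),+}(\phi)\in\theta_{FF}(\phi)$ starts strictly between $f$ and $f'$ at time $t_1$, so noncrossing gives $x=f(s)\le\phi^+_{st_1}(z)\le f'(s)=x$, i.e.\ $\phi^+_{st_1}(z)=x$ with $(z,t_1)\in\mathbb Q^2$ and $t_1<s$, contradicting (ii). Hence $f=f'$.

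\textbf{Existence.} Take the forward half to be $f_+(t)=\phi^+_{ts}(x)$, $t\ge s$; this is continuous in $t$ by the continuity of the evaluation map $t\mapsto\phi^+_{ts}(x)$ established earlier, and it is a pointwise iterated limit of trajectories $Z^{(s',q),+}(\phi)$ over rationals $s'\uparrow s$ and $q\downarrow x$ (using $\phi_{ts'}\to\phi_{ts}$ in $d_\cD$ at continuity points, then right-continuity of $\phi^+_{ts}$), so $\theta_0(\phi)\cup\{f_+\}$ is noncrossing and hence $f_+\in\theta_{FF}(\phi)$, by maximality of $\theta_{FF}(\phi)$ and the pairwise-implies-jointly property already noted. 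It remains to produce a continuous backward half $f_-:(-\infty,s]\to[-\infty,\infty]$ with $f_-(s)=x$ such that $\theta_0(\phi)\cup\{f_-\}$ is noncrossing; gluing $f_-$ to $f_+$ then gives the required element of $\theta_{FF}(\phi)\cap\Pi^F=\theta_F(\phi)$. If $(x,s)$ is approachable from the infinite past — i.e.\ there are rationals $s_n\downarrow-\infty$ and $q_n$ with $\phi^+_{ss_n}(q_n)\to x$ — one obtains such an $f_-$ (in fact a whole full path through $(x,s)$) as a subsequential limit, uniform on compacts, of $Z^{(s_n,q_n),+}(\phi)$: the limit exists, is continuous, and is finite-valued by exactly the compactness/equicontinuity argument of Proposition \ref{thetainh}, and it lies in $\theta_{FF}(\phi)$ because noncrossing is closed under pointwise limits. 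Otherwise, for all $u<s$ near $-\infty$ the point $x$ lies strictly inside a jump gap $(\phi^-_{su}(y_u),\phi^+_{su}(y_u))$ of $\phi_{su}$, and the natural candidate is $f_-(u)=y_u$, the (then unique) straddle point, equivalently a version of $\phi_{su}^{-1}(x)$; here $y_u\to x$ as $u\uparrow s$ since $\phi_{su}\to\id$, $u\mapsto y_u$ is continuous by continuity of $u\mapsto\phi_{su}$ and of inversion on $\cD$, $y_u$ stays bounded (hence finite) because a jump of $\phi^+_{su}$ has length $<1$, and the noncrossing of $f_-$ with every $Z^{(r,q),+}(\phi)$ follows by comparison with the left- and right-hand edge trajectories issuing from $(y_u,u)$, which lie in $\theta_{FF}(\phi)$ and bracket $x$ at time $s$.

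\textbf{Main obstacle.} The compactness and noncrossing bookkeeping is routine given the tools already assembled. The hard part is the gap case: checking rigorously that the backward path can be taken to hit $(x,s)$ exactly, is continuous across the junction time $s$, and remains noncrossing with all rational forward trajectories even when $(x,s)$ sits inside a persistent gap of the flow maps $\phi_{su}$. I expect this to rest on the monotonicity in $u$ of the gaps $(\phi^-_{su}(y_u),\phi^+_{su}(y_u))\ni x$, their bounded length, and a careful use of the edge trajectories together with the continuity and isometry properties of $(\cD,d_\cD)$ from the Appendix.
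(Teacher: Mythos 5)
Your uniqueness argument is sound and is essentially the argument of the paper: bound $f(t)$ for $t\ge s$ between $\phi^-_{ts}(x)$ and $\phi^+_{ts}(x)$ using noncrossing against nearby rational forward trajectories, invoke (i), and then use a rational trajectory started strictly between two putative paths for $t<s$ to contradict (ii). That half is fine.

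The existence half is where the proposal departs from the paper, and where it has a real gap — one that you flag yourself in the last paragraph. You split into the case that $(x,s)$ is approachable by $\phi^+_{ss_n}(q_n)$ with $(s_n,q_n)$ rational and the case that $x$ sits in a persistent jump gap of $\phi_{su}$ for all small $u$, and you do not actually carry through the gap case. The missed insight is that no case split is needed: the single formula
\[
f(t)=\inf\{y:\phi^+_{st}(y)>x\}\quad (t<s),\qquad f(t)=\phi^+_{ts}(x)\quad (t\ge s),
\]
covers both situations uniformly — indeed your ``straddle point'' $y_u$ in the gap case is precisely $\inf\{y:\phi^+_{su}(y)>x\}$, and in the non-gap case the same infimum is the genuine preimage. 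With this definition, continuity on $(-\infty,s]$ (including across the junction at $s$) follows from the weak flow inequality: for $t<u\le s$ and $y_n\downarrow f(t)$ one has $\phi^+_{su}(\phi^+_{ut}(y_n))\ge\phi^+_{st}(y_n)>x$, giving $\phi^+_{ut}(f(t))\ge f(u)$, and symmetrically $\phi^-_{ut}(f(t))\le f(u)$; then $\|\phi_{ut}-\id\|<\ve$ for $u-t$ small forces $|f(u)-f(t)|<\ve$. Noncrossing with $\theta_0(\phi)$ is built into the definition of the backward inverse, so $f\in\theta_F(\phi)$ by maximality. This is both shorter and avoids the compactness, equicontinuity, and subsequential-limit machinery you invoke (and the unproved bookkeeping about edge trajectories and monotone gap lengths). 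In short: uniqueness is correct and matches the paper; for existence you should replace the two-case subsequential-limit construction with the explicit generalized-inverse formula and the weak-flow continuity estimate, which closes the gap you identified.
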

\begin{proof}
 Define a function $f:\mathbb{R} \rightarrow \mathbb{R}$ by
 $$
 f(t) = \begin{cases}
         \inf \{ y : \phi_{st}^+(y) > x \} &\text{if $t < s$} \\
         \phi^+_{ts}(x) &\text{if $t \geq s$}.
        \end{cases}
 $$
We first show that $f$ is continuous. Clearly this is the case on $[s, \infty)$. Suppose $t < u \leq s$. Pick a sequence $y_n \downarrow f(t)$. By the weak flow property of $\phi$, $\phi^+_{su}(\phi^+_{ut}(y_n)) \geq \phi^+_{st}(y_n)>x$ and so $\phi^+_{ut}(y_n) \geq f(u)$ for all $n$. Letting $n \rightarrow \infty$ gives $\phi^+_{ut}(f(t)) \geq f(u)$. Similarly, using $\inf \{ y : \phi_{st}^+(y) > x \}=\sup \{ y : \phi_{st}^-(y) \leq x \}$, $\phi^-_{ut}(f(t)) \leq f(u)$. Now given $\epsilon > 0$, there exists some $\delta > 0$ such that $\| \phi_{ut} - \id \| < \epsilon$ for all $0 < u -t < \delta$. But then $|f(t) - f(u)| \leq |f(t) - \phi^-_{ut}(f(t))|\wedge|f(t) - \phi^+_{ut}(f(t))|< \epsilon$, proving continuity. 
Also, by construction, $\th_0(\phi)\cup\{f\}\in\bar{\cH}$ and so $f \in \theta_F (\phi)$ as required.

For uniqueness, suppose $f_1, f_2 \in \theta_F (\phi)$ with $f_1(s)=f_2(s)=x$. By the noncrossing property, for all $t \geq s$, $\phi_{ts}^-(x) \leq f_i(t) \leq \phi_{ts}^+(x)$. Condition (i) therefore implies $f_1(t)=f_2(t)$ for all $t \geq s$. If $t < s$, then $\sup \{ y : \phi_{st}^-(y) < x \} \leq f_i(t) \leq \inf \{ y : \phi_{st}^+(y) > x \}$. If $\inf \{ y : \phi_{st}^+(y) > x \} - \sup \{ y : \phi_{st}^-(y) < x \} > 0$, then by continuity, there exist rationals $(y,u)$ with $\phi^+_{st}(y) = x$. Hence (ii) ensures $f_1(t)=f_2(t)$ for all $t < s$.
\end{proof}

  \begin{prop}
  \label{isomet}
   The function $\theta_{F} : C^\circ(\R,\cD) \rightarrow \cH^F$ is continuous.
  \end{prop}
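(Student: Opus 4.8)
The plan is to prove continuity by fixing a convergent sequence $\phi^k\to\phi$ in $C^\circ(\R,\cD)$ and establishing the two one-sided Hausdorff estimates $\sup_{g\in\theta_F(\phi^k)}d^F(g,\theta_F(\phi))\to0$ and $\sup_{h\in\theta_F(\phi)}d^F(h,\theta_F(\phi^k))\to0$, which together give $d_{\cH^F}(\theta_F(\phi^k),\theta_F(\phi))\to0$; the sets are compact by Proposition~\ref{thetainh}, so the suprema and infima are attained. I would use the identification (from the coherence remark preceding Proposition~\ref{thetainh}) $\theta_F(\psi)=\{f\in\Pi^F:\th_0(\psi)\cup\{f\}\text{ noncrossing}\}$, and the continuity of the evaluation maps $\phi\mapsto\phi^+_{ts}(x)$, which gives $Z^{e,+}(\phi^k)\to Z^{e,+}(\phi)$ pointwise for every $e\in\Q^2$. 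The technical heart is a \emph{sandwich lemma}: for $\psi\in C^\circ(\R,\cD)$, $f\in\theta_F(\psi)$ and $s<t$ with $f(s)$ finite, $\psi^-_{ts}(f(s))\le f(t)\le\psi^+_{ts}(f(s))$. To prove it, choose rationals $q_m\downarrow f(s)$, $q_m>f(s)$, and rationals $s_m\uparrow s$ with $\|\psi_{ss_m}-\id\|<q_m-f(s)$; then $Z^{(s_m,q_m),+}(\psi)(s)=\psi^+_{ss_m}(q_m)$ lies strictly above $f(s)$, so noncrossing forces $\psi^+_{ts_m}(q_m)\ge f(t)$, and the weak-flow inequality $\psi^+_{ts_m}\le\psi^+_{ts}\circ\psi^+_{ss_m}$ together with right-continuity of $\psi^+_{ts}$ sends the left side to $\psi^+_{ts}(f(s))$; the lower bound is symmetric, using $\psi^-$ and left-continuity. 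Using $2d_\cD(g,\id)=\|g-\id\|$, this yields $|f(t)-f(s)|\le\|\psi_{ts}-\id\|$ when $f(s)$ is finite, and shows each $f\in\theta_F(\psi)$ is either finite everywhere or identically $+\infty$ or identically $-\infty$.

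For the first estimate I would argue by contradiction: if it fails, pass to a subsequence with $g_k\in\theta_F(\phi^k)$ and $d^F(g_k,\theta_F(\phi))\ge\ve>0$. Since $\phi^k\to\phi$ forces $\sup_{s,t\in(-n,n)}d_\cD(\phi^k_{ts},\phi_{ts})\to0$ and $\|\phi_{ts}-\id\|\to0$ uniformly along the diagonal of any compact time window (Proposition~\ref{PHICONT}), the sandwich estimate applied to $\phi^k$ makes $\{g_k\}$ equicontinuous in the $\Phi$-coordinate on compacts — a path near $\pm\infty$ being automatically flat there, as in the proof of Proposition~\ref{thetainh} — so a further subsequence converges in $d^F$ to some $g\in\Pi^F$. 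As $Z^{e,+}(\phi^k)\to Z^{e,+}(\phi)$ pointwise for $e\in\Q^2$ and the noncrossing relation is preserved under pointwise limits (as in the proof of Proposition~\ref{LAF}), $\th_0(\phi)\cup\{g\}$ is noncrossing, so $g\in\theta_F(\phi)$, contradicting $d^F(g_k,\theta_F(\phi))\ge\ve$.

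For the second estimate, compactness of $\theta_F(\phi)$ reduces matters to showing $d^F(h,\theta_F(\phi^k))\to0$ for each fixed $h\in\theta_F(\phi)$. If $h\equiv+\infty$ (similarly $-\infty$), take $g_k$ to be a canonical path of $\phi^k$ through $(0,N_k)$ with integers $N_k\to\infty$: these lie in $\theta_F(\phi^k)$ by Proposition~\ref{FSX} and $\phi^{k,+}_{t,0}(N_k)\ge\phi^{k,+}_{t,0}(0)+N_k\to\infty$ uniformly on compacts, so $d^F(g_k,h)\to0$. If $h$ is finite everywhere — hence uniformly continuous on each $[-n,n]$ — the aim is to track $h$ on $[-n,n]$ by a path $g_k\in\theta_F(\phi^k)$ built from canonical branches of $\phi^k$ (using evaluation-continuity for $\phi^+$, and for a branch $\phi^-_{\cdot\, s_0}(x_0)$ the approximation by $\phi^{k,+}_{\cdot\, s_0}(x_0-\eta)$, $\eta\downarrow0$), with the equicontinuity estimate controlling the distance off a finite grid and the contribution from $|t|>n$ being at most $2/(1+n)$, handled by taking $n$ large at the outset.

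The main obstacle is precisely this construction: an $h\in\theta_F(\phi)$ need not be globally a canonical branch, since it may \emph{bifurcate} — hug a spatial discontinuity of some $\phi_{t_1 s_0}$ and then separate into the interior of a jump interval — so following one canonical branch from a pinned start accumulates error over long horizons. The resolution I would pursue exploits that over a fine enough grid $t_0<\dots<t_M$ of $[-n,n]$ the maps $\phi_{t_{j+1}t_j}$, hence $\phi^k_{t_{j+1}t_j}$ for large $k$, are uniformly close to $\id$, so their spatial jumps — and thus the bifurcation ambiguity of $h$ on each cell — are uniformly small; one then builds $g_k$ cell by cell, re-synchronising at each grid time to within $O(\|\phi_{t_{j+1}t_j}-\id\|)$ of $h(t_j)$ by switching, inside $\theta_F(\phi^k)$, to an appropriate canonical branch through a point near $(t_j,h(t_j))$, and checking the pieces patch into a single continuous noncrossing path with total error $o(1)$ as the grid is refined and $k\to\infty$. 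Making this patching precise is the delicate point; the remaining checks ($g_k\in\Pi^F$, $g_k\in\theta_F(\phi^k)$) are routine given the earlier propositions.
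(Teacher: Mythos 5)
Your route differs genuinely from the paper's. You split the Hausdorff estimate into its two one-sided halves: upper semi-continuity (every $\phi^k$-path is close to some $\phi$-path) via an Arzel\`a--Ascoli argument using your sandwich lemma for equicontinuity, plus preservation of noncrossing under pointwise limits; and lower semi-continuity (every $\phi$-path is close to some $\phi^k$-path) via a cell-by-cell patching construction. The first half is sound --- the sandwich lemma $\psi^-_{ts}(f(s))\le f(t)\le\psi^+_{ts}(f(s))$ for $f\in\theta_F(\psi)$ is correct, it gives the needed equicontinuity, and the contradiction argument works. The paper, by contrast, proves a single direct Lipschitz estimate $d^{(n)}_{\cH^F}(\theta_F(\phi^1),\theta_F(\phi^2))\le d^{(n)}_C(\phi^1,\phi^2)$: it uses compactness to pick $f_1\in\theta_F(\phi^1)$ and a nearest $f_2\in\theta_F(\phi^2)$ achieving the Hausdorff distance, pins the gap at both endpoints of an interval $(s,t]$ using noncrossing, and then reads off the gap as a value of $\phi^{1\times}_{ts}-\phi^{2\times}_{ts}$ via the cross-map $f\mapsto f^\times$. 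This one-shot argument handles both directions of the Hausdorff distance simultaneously and avoids ever having to construct an approximating path.

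Your second half is where the genuine gap lies, and you yourself flag it: ``Making this patching precise is the delicate point.'' The difficulty is not merely bookkeeping. Switching at a grid time $t_j$ from one canonical $\phi^k$-branch to another ``near $(t_j,h(t_j))$'' produces a discontinuity unless the new branch passes exactly through the old branch's value at $t_j$; and any attempt to splice across the discontinuity will, in general, cross some canonical $\phi^k$-path $Z^{e,+}(\phi^k)$, which then excludes the spliced curve from $\theta_F(\phi^k)$ by the very definition of that set as the maximal noncrossing family over $\th_0(\phi^k)$. Moreover the per-cell errors $O(\|\phi_{t_{j+1}t_j}-\id\|)$ need not sum to $o(1)$ over $O(1/\mathrm{mesh})$ cells without an additional cancellation argument you have not supplied. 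The bifurcation obstruction you correctly identify --- that $h$ may hug a discontinuity of $\phi$ and is therefore not a single canonical branch --- is precisely the phenomenon that makes the constructive approach hard, and it is exactly what the paper's cross-map trick sidesteps by never constructing the approximant. If you want to salvage your route, you would need either to prove that the maximal/minimal noncrossing envelopes $\sup\{\phi^{k,+}_{ts}(x):(s,x)\in E,\,s\le t,\,x<h(s)\}$ converge to $h$ in $d^F$, or to deduce the lower half from the upper half by applying your first argument to the time-reversed flows $\hat\phi^k\to\hat\phi$ together with the identity $\theta_F(\hat\psi)=\{t\mapsto f(-t):f\in\theta_F(\psi)\}$; either would close the gap but neither is in the proposal as written.
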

  \begin{proof}
   A sequence $\phi^m \rightarrow \phi$ in $(C^\circ(\R,\cD), d_C)$ if and only if $d^{(n)}_C(\phi^m, \phi) \rightarrow 0$ for all $n \in \mathbb{N}$. Since $d^F(f,g) \leq d^{(n)}(f,g) \vee 2/(n+1)$, where $d^{(n)}$ is defined in the previous proposition, in order to show that $\theta_{F}$ is continuous, it is enough to show that for every $n$, $d_{\cH^F}^{(n)}(\theta_{F}(\phi^1),\theta_{F}(\phi^2)) \leq d^{(n)}_C(\phi^1, \phi^2)$ for all $\phi^1, \phi^2 \in C^\circ(\R,\cD)$, where
   $$
   d^{(n)}_{\cH^F}(K_1,K_2)=\sup_{g_1\in K_1}\inf_{g_2\in K_2}d^{(n)}(g_1,g_2)\vee
              \sup_{g_2\in K_2}\inf_{g_1\in K_1}d^{(n)}(g_1,g_2).
   $$
   
Suppose $\phi^1, \phi^2 \in C^\circ(\R,\cD)$. By compactness, there exist $f_1 \in \theta_{F}(\phi^1)$, $f_2 \in \theta_{F}(\phi^2)$ such that $d^{(n)}(f_1,f_2) = d^{(n)}_{\cH^F}(\theta_{F}(\phi^1), \theta_{F}(\phi^2))$. Without loss of generality, suppose that $f_2$ is
chosen so that $d^{(n)}(f_1,f_2) = \inf_{g \in \theta_{F}(\phi^2)} d^{(n)}(f_1,g) = f_2(s) - f_1(s)$ for some $s \in \mathbb{R}$. Let $f_1(s)=x$ and $f_2(s)=y$. By continuity, there exists some $t > s$ such that $f_1(t) - \phi^-_{ts}(y) = d^{(n)}(f_1,f_2)$, otherwise it would be possible to pick some $g \in \theta_{F}(\phi^2)$ with $d^{(n)}(f_1,g) < d^{(n)}(f_1,f_2)$.
Since $f_1(t) \in [\phi^{1,-}_{ts}(x), \phi^{1,+}_{ts}(x)]$,
   $$
   \phi^{2,-}_{ts}(y) - \phi^{1,+}_{ts}(x) \leq x-y \leq \phi^{2,+}_{ts}(y) -
   \phi^{1,-}_{ts}(x).
   $$
   Hence there exists some
   $$
   u \in \left [ \frac{1}{2}(x + \phi^{1,-}_{ts}(x)), \frac{1}{2}(x +
   \phi^{1,+}_{ts}(x)) \right ] \cap \left [ \frac{1}{2}(y +
   \phi^{2,-}_{ts}(y)), \frac{1}{2}(y + \phi^{2,+}_{ts}(y)) \right ].
   $$
   Therefore,
   \begin{eqnarray*}
     d_{\cH^F}(\theta_F(\phi^1), \theta_F(\phi^2))
       &=& |x-y| \\
       &=& |\phi^{1\times}_{ts}(u) -\phi^{2\times}_{ts}(u)| \\
    &\leq& \| \phi^{1\times}_{ts} - \phi^{2\times}_{ts} \| \\
    &\leq& d^{(n)}_{C}(\phi^1, \phi^2),
   \end{eqnarray*}
   as required.
  \end{proof}

\markright{\sc{Bibliography}}

\def\cprime{$'$}


\begin{thebibliography}{10}

\bibitem{A79}
Richard~A. Arratia.
\newblock {\em Coalescing {B}rownian motions on the line}.
\newblock PhD thesis, University of Wisconsin, Madison, 1979.

\bibitem{B+C}
Martin~Z. Bazant and Darren Crowdy.
\newblock Conformal mapping methods for interfacial dynamics.
\newblock \texttt{arXiv:cond-mat/0409439}, 2005.

\bibitem{B}
Patrick Billingsley.
\newblock {\em Convergence of probability measures}.
\newblock Wiley Series in Probability and Statistics: Probability and
  Statistics. John Wiley \& Sons Inc., New York, 1999.

\bibitem{C+M}
L.~Carleson and N.~Makarov.
\newblock Aggregation in the plane and {L}oewner's equation.
\newblock {\em Comm. Math. Phys.}, 216(3):583--607, 2001.

\bibitem{Eden}
Murray Eden.
\newblock A two-dimensional growth process.
\newblock In {\em Proc. 4th Berkeley Sympos. Math. Statist. and Prob., Vol.
  IV}, pages 223--239. Univ. California Press, Berkeley, Calif., 1961.

\bibitem{FINR}
L.~R.~G. Fontes, M.~Isopi, C.~M. Newman, and K.~Ravishankar.
\newblock The {B}rownian web: characterization and convergence.
\newblock {\em Ann. Probab.}, 32(4):2857--2883, 2004.

\bibitem{FN}
Luiz~Renato Fontes and Charles~M. Newman.
\newblock The full {B}rownian web as scaling limit of stochastic flows.
\newblock {\em Stoch. Dyn.}, 6(2):213--228, 2006.

\bibitem{harris}
Theodore~E. Harris.
\newblock Coalescing and noncoalescing stochastic flows in {${\bf R}\sp{1}$}.
\newblock {\em Stochastic Process. Appl.}, 17(2):187--210, 1984.

\bibitem{HL}
M.~B. Hastings and L.~S. Levitov.
\newblock {L}aplacian growth as one-dimensional turbulence.
\newblock {\em Physica D}, 116 (1-2):244, 1998.

\bibitem{MR1943877}
Jean Jacod and Albert~N. Shiryaev.
\newblock {\em Limit theorems for stochastic processes}, volume 288 of {\em
  Grundlehren der Mathematischen Wissenschaften [Fundamental Principles of
  Mathematical Sciences]}.
\newblock Springer-Verlag, Berlin, 2003.

\bibitem{Kesten}
Harry Kesten.
\newblock Hitting probabilities of random walks on {${\bf Z}\sp d$}.
\newblock {\em Stochastic Process. Appl.}, 25(2):165--184, 1987.

\bibitem{L}
Gregory~F. Lawler.
\newblock {\em Conformally invariant processes in the plane}, volume 114 of
  {\em Mathematical Surveys and Monographs}.
\newblock American Mathematical Society, Providence, RI, 2005.

\bibitem{NPW}
L.~Niemeyer, L.~Pietronero, and H.~J. Wiesmann.
\newblock Fractal dimension of dielectric breakdown.
\newblock {\em Phys. Rev. Lett.}, 52:1033--1036, 1984.

\bibitem{Piterbarg}
Vladimir~V. Piterbarg.
\newblock Expansions and contractions of isotropic stochastic flows of
  homeomorphisms.
\newblock {\em Ann. Probab.}, 26(2):479--499, 1998.

\bibitem{RZ}
Steffen Rohde and Michel Zinsmeister.
\newblock Some remarks on {L}aplacian growth.
\newblock {\em Topology Appl.}, 152(1-2):26--43, 2005.

\bibitem{TW}
B{\'a}lint T{\'o}th and Wendelin Werner.
\newblock The true self-repelling motion.
\newblock {\em Probab. Theory Related Fields}, 111(3):375--452, 1998.

\bibitem{Tsirelson}
Boris Tsirelson.
\newblock Nonclassical stochastic flows and continuous products.
\newblock {\em Probab. Surv.}, 1:173--298 (electronic), 2004.

\bibitem{W+S}
T.~A. Witten and L.~M. Sander.
\newblock Diffusion-limited aggregation, a kinetic critical phenomenon.
\newblock {\em Phys. Rev. Lett.}, 47(19):1400--1403, 1981.

\end{thebibliography}
\end{document}